\pdfoutput=1

\documentclass{amsart}
\usepackage[utf8]{inputenc}
\usepackage[T1]{fontenc}

\usepackage{amssymb}
\usepackage{braket}
\usepackage{mathrsfs}
\usepackage{ifthen}
\usepackage{here}
\usepackage{todonotes}
\usepackage{tikz}
\usepackage{mathtools}
\usetikzlibrary{patterns,decorations.pathreplacing,calligraphy}
\usepackage{comment} 
\usepackage{mleftright}

\usepackage{amsmath,amsthm,verbatim,amsfonts, graphicx,booktabs}

\usepackage[dvipsnames]{xcolor}
\usepackage[pagebackref,hypertexnames=false,colorlinks=true,
            linkcolor=NavyBlue,
            citecolor=NavyBlue,
            urlcolor=NavyBlue]{hyperref} 
\usepackage{cleveref}
 \usepackage[all]{xy}
 \usepackage{amscd}
 \usepackage[alphabetic,backrefs,msc-links]{amsrefs}
 \usepackage{color}
 \usepackage{enumitem}
\newlist{steps}{enumerate}{1}
\setlist[steps, 1]{label = Step \arabic*:}
\usepackage[abs]{overpic}
\usepackage{tikz-cd}
\usepackage{pinlabel}

\usepackage{geometry}
\makeatletter
\DeclareRobustCommand\widecheck[1]{{\mathpalette\@widecheck{#1}}}
\def\@widecheck#1#2{%
   \setbox\z@\hbox{\m@th$#1#2$}%
   \setbox\tw@\hbox{\m@th$#1%
      {%
         \vrule\@width\z@\@height\ht\z@
         \vrule\@height\z@\@width\wd\z@}$}%
   \dp\tw@-\ht\z@
   \@tempdima\ht\z@ \advance\@tempdima2\ht\tw@ \divide\@tempdima\thr@@
   \setbox\tw@\hbox{%
      \raise\@tempdima\hbox{\scalebox{1}[-1]{\lower\@tempdima\box\tw@}}}%
   {\ooalign{\box\tw@ \cr \box\z@}}}
   \let\@wraptoccontribs\wraptoccontribs
\makeatother

\theoremstyle{plain}
\newtheorem*{theorem*}{Theorem}
\newtheorem{thm}{Theorem}[section]
\crefname{thm}{Theorem}{Theorems}
\Crefname{thm}{Theorem}{Theorems}
\newtheorem{prop}[thm]{Proposition}
\crefname{prop}{Proposition}{Propositions}
\Crefname{prop}{Proposition}{Propositions}
\newtheorem{lem}[thm]{Lemma}
\crefname{lem}{Lemma}{Lemmas}
\Crefname{lem}{Lemma}{Lemmas}
\newtheorem{cor}[thm]{Corollary}
\crefname{cor}{Corollary}{Corollaries}
\Crefname{cor}{Corollary}{Corollaries}
\newtheorem{rem}[thm]{Remark}
\crefname{rem}{Remark}{Remarks}
\Crefname{rem}{Remark}{Remarks}

\crefname{claim}{Claim}{Claims}
\Crefname{claim}{Claim}{Claims}

\crefname{property}{Property}{Properties}
\Crefname{property}{Property}{Properties}

\crefname{problem}{Problem}{Problems}
\Crefname{problem}{Problem}{Problems}

\crefname{conjecture}{Conjecture}{Conjecture}
\Crefname{conjecture}{Conjecture}{Conjecture}

\theoremstyle{definition}
\newtheorem{defn}[thm]{Definition}
\crefname{defn}{Definition}{Definitions}
\Crefname{defn}{Definition}{Definitions}

\crefname{notation}{Notation}{Notations}
\Crefname{notation}{Notation}{Notations}

\crefname{convention}{Convention}{Conventions}
\Crefname{convention}{Convention}{Conventions}
\crefname{cond}{Condition}{Conditions}
\Crefname{cond}{Condition}{Conditions}

\crefname{assum}{Assumption}{Assumptions}
\Crefname{assum}{Assumption}{Assumptions}

\Crefname{ques}{Question}{Question}

\theoremstyle{remark}
\crefname{rem}{Remark}{Remarks}
\Crefname{rem}{Remark}{Remarks}

\crefname{ex}{Example}{Examples}
\Crefname{ex}{Example}{Examples}

\crefname{section}{Section}{Sections}
\Crefname{section}{Section}{Sections}
\crefname{subsection}{Subsection}{Subsections}
\Crefname{subsection}{Subsection}{Subsections}
\crefname{figure}{Figure}{Figures}
\Crefname{figure}{Figure}{Figures}

\AddToHook{env/thm/begin}{\crefalias{thm}{theorem}}
\AddToHook{env/defn/begin}{\crefalias{thm}{definition}}
\AddToHook{env/lem/begin}{\crefalias{thm}{lemma}}
\AddToHook{env/prop/begin}{\crefalias{thm}{proposition}}
\AddToHook{env/rem/begin}{\crefalias{thm}{remark}}
\AddToHook{env/ex/begin}{\crefalias{thm}{example}}

\newcommand{\bt}{{\bf t}}

\def\sf{\text{sf}}

\def\bt{{\mathbf t}}

\def\tt{_{T,\bt}}

\newcommand{\mbar}[1]{{\ooalign{\hfil#1\hfil\crcr\raise.167ex\hbox{--}}}}

     \RequirePackage{rotating}                   
    \def\HMt{%
       \setbox0=\hbox{$\widehat{\mathit{HM}}$}
       \setbox1=\hbox{$\mathit{HM}$}
       \dimen0=1.1\ht0
       \advance\dimen0 by 1.17\ht1
       \smash{\mskip2mu\raise\dimen0\rlap{%
          \begin{turn}{180}
              {$\widehat{\phantom{\mathit{HM}}}$}
           \end{turn}} \mskip-2mu    
                \mathit{HM}
                    }{\vphantom{\widehat{\mathit{HM}}}}{}}

\title[Local Knots, $\nu^+$-Sharp Knots, and Rational Slice Genus]{Local Knots, $\nu^+$-Sharp Knots, and Rational Slice Genus}

\author{Junghwan Park}
\address{Department of Mathematical Sciences, KAIST, Republic of Korea}
\email{jungpark0817@kaist.ac.kr}

\author{Zhongtao Wu}
\address{Department of Mathematics, The Chinese University of Hong Kong, Hong Kong}
\email{ztwu@math.cuhk.edu.hk}
\author{Jingling Yang}
\address{School of Mathematical Sciences, Xiamen University, China}
\email{yangjingling@xmu.edu.cn}

\begin{document}



\begin{abstract}
Hom and Wu introduced the knot concordance invariant $\nu^{+}$ for knots in $S^{3}$ and proved that it gives a lower bound for the slice genus. Wu and Yang extended $\nu^{+}$ to knots in rational homology $3$-spheres, where it gives a lower bound for the rational slice genus, an analogue of the slice genus for knots in rational homology $3$-spheres. We call a knot $\nu^{+}$-sharp if this bound is realized as an equality.

An open question asks whether a local knot in a $3$-manifold $Y$, that is, a knot contained in a $3$-ball, can bound a surface of smaller genus in $Y\times I$ than in $S^{3}\times I$. Using the Heegaard Floer invariant $\nu^+$, we show that this does not occur for local knots arising from $\nu^+$-sharp knots: if $K\subset S^3$ is $\nu^+$-sharp and $Y$ is a rational homology $3$-sphere, then the induced local knot in $Y$ has rational slice genus equal to the slice genus of $K$. The proof proceeds by establishing an additivity result for the rational slice genus.
\end{abstract}

\maketitle
\section{Introduction}\label{sec:intro}
Given a knot $K$ in the $3$-sphere $S^3$, the \emph{slice genus} of $K$ is defined by
\[
g_4(K):=\min_{F} g(F),
\]
where the minimum is taken over all compact, oriented, connected surfaces $F$ that are smoothly and properly embedded in $S^3 \times I$ and satisfy
$\partial F=K \subset S^3 \times \{1\}$.
 The slice genus has been studied extensively; see, for example,~\cite{Murasugi:1965, Kronheimer-Mrowka:1993,OSf,Rasmussen:2010,HW}. Many lower bounds for $g_4(K)$ have been obtained using a variety of techniques. One of the most useful lower bounds comes from Heegaard Floer theory~\cite{OzSz:2004, OSk, Rasmussen:2003} and is given by the knot concordance invariant $\nu^+(K)$, introduced by Hom and the second author~\cite{HW}, who proved that
\[
\max \left\{\nu^+(K), \nu^+(-K) \right\} \leq g_4(K).
\]
We say that $K$ is \emph{$\nu^+$-sharp} if the above inequality is an equality. Here, $-K$ represents the reverse of the mirror of $K$ in $S^3$.

Analogously, for a knot $K$ in a rational homology $3$-sphere $Y$, one can define the \emph{rational slice genus}
(more precisely, the \emph{rational slice genus relative to the rational longitude}) as follows:
\begin{equation*}
\label{Qslicegenus}
g_{Y \times I}(K):=\mathop{\min}\limits_{F} \dfrac{-\chi(F)}{2|[\mu] \cdot [\partial F]|}+\frac{1}{2},
\end{equation*}
where the minimum is taken over all Seifert-framed rational slice surfaces $F$ for $K$, and $\mu$ denotes a meridian of $K$ (see Section~\ref{Rational slice genus} for the precise definition).\footnote{We normalize the rational slice genus
in~\cite{WY} by adding $\frac{1}{2}$ so that this definition agrees with the slice genus when $K$ is a knot in $S^3$. We also remark that there are other variants of the definition of the rational slice genus; see, for example,~\cite{Raoux-Hedden:2023}.}
A systematic study of this notion was initiated in~\cite{WY} using Heegaard Floer theory. Among other things,
it generalized the $\nu^+$-invariant to knots in rational homology $3$-spheres, denoted by $\nu^+(Y,K)$, and proved that
\[
\max \left\{\nu^+(Y,K), \nu^+(-Y,-K) \right\} \leq g_{Y\times I}(K).
\]
Here, $-K \subset -Y$ denotes the knot obtained by reversing the orientations of both $K$ and $Y$. The notion of \emph{$\nu^+$-sharp} naturally generalizes to knots in rational homology $3$-spheres, namely, knots  for which the above inequality is an equality.



\subsection*{Rational slice genus of local knots}
Our main motivation for this article is to understand the rational slice genus of local knots in rational homology $3$-spheres. A knot $K$ in a $3$-manifold $Y$ is called \emph{local} if it is contained in a $3$-ball in $Y$. By definition, for a local knot in a rational homology $3$-sphere $Y$, and more generally for a knot $K$ that is null-homologous in $Y$, we have
\[
g_{Y \times I}(K)=\min_{F} g(F),
\]
where the minimum is taken over all compact, oriented, connected surfaces $F$ that are smoothly and properly embedded in $Y \times I$ and satisfy
$\partial F=K \subset Y \times \{1\}$.

Note that each local knot $K\subset Y$ is induced by a knot $K\subset S^3$. When there is no ambiguity, we will abuse notation and use the same symbol for both. With this convention, we clearly have
\[
g_{Y \times I}(K) \leq g_4(K)
\]
for any local knot $K\subset Y$. An interesting open question is whether $g_{Y \times I}(K)=g_4(K)$ for all local knots (see, e.g.,~\cite[Section~2]{KR}). If true, this would mean that allowing a larger ambient $3$-manifold does not permit a more efficient choice of bounding surface, which is perhaps somewhat counterintuitive. Note that this question is not restricted to knots in rational homology $3$-spheres. More generally, for any local knot $K$ in an arbitrary $3$-manifold $Y$, we may define $g_{Y\times I}(K)$ analogously.

There are several known cases in which
\[
g_{Y\times I}(K)=g_4(K).
\]
In~\cite[Proposition~2.9]{NOPP} (see also~\cite[Proposition~2.2]{KR}), the authors show that this equality always holds when $g_4(K)\leq 1$. In particular, if there exists an example for which the inequality is strict, then it must satisfy $g_4(K)>1$. Moreover,~\cite[Theorem~2.5]{DNPR} proves that the equality holds for every local knot in $S^1\times S^2$, and~\cite[Proposition~2.4]{KR} proves that it also holds for every local knot in a $3$-manifold that smoothly embeds in $S^4$.

We extend these results by showing that $\nu^+$-sharp knots provide a broad new class of local knots for which the two genera agree.

\begin{thm}\label{thm:localknotgenus-S3}
Let $K$ be a $\nu^+$-sharp knot in $S^3$ and let $Y$ be a rational homology $3$-sphere. If $K\subset Y$ is the induced local knot, then it is $\nu^+$-sharp and
$g_{Y \times I}(K)=g_4(K)$.
\end{thm}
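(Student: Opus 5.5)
The plan is to sandwich $g_{Y\times I}(K)$ between $\nu^+$-type lower bounds and the obvious upper bound $g_4(K)$. The local knot $K\subset Y$ is the connected sum $(Y,U)\#(S^3,K)$, where $U$ is the unknot in $Y$. The upper bound $g_{Y\times I}(K)\le g_4(K)$ comes from pushing a minimal genus slice surface in $S^3\times I$ into a ball times $I$ inside $Y\times I$. For the lower bound we use the inequality of \cite{WY},
\[
\max\{\nu^+(Y,K),\nu^+(-Y,-K)\}\le g_{Y\times I}(K).
\]
It then suffices to show that $\nu^+(Y,K)=\nu^+(K)$ and $\nu^+(-Y,-K)=\nu^+(-K)$. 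Since $K$ is $\nu^+$-sharp, $\max\{\nu^+(K),\nu^+(-K)\}=g_4(K)$, and this gives both the equality of genera and the $\nu^+$-sharpness of the local knot.

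To compute $\nu^+(Y,K)$ I would use the Künneth formula for knot Floer complexes under connected sum. In the spin$^c$ structure $\mathfrak s$ of $Y$, the complex is
\[
CFK^\infty(Y,K,\mathfrak s)\simeq CFK^\infty(Y,U,\mathfrak s)\otimes CFK^\infty(S^3,K),
\]
and $CFK^\infty(Y,U,\mathfrak s)\simeq CF^\infty(Y,\mathfrak s)\otimes \mathbb F[U,U^{-1}]$-type complex with trivial Alexander filtration, i.e.\ a copy of $CF^\infty(Y,\mathfrak s)$ with $i=j$. The large surgery formula then identifies the subquotient complexes $A^+_k(Y,K,\mathfrak s)$ with $CF^+(Y,\mathfrak s)\otimes A^+_k(K)$ up to homotopy. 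The $\nu^+$ invariant of \cite{WY} is defined as the minimal $k$ for which the map $v^+_k\colon A^+_k\to B^+$ has the same grading shift as for $d$ of $Y$, i.e.\ for which $d(A^+_k)$ attains $d(Y,\mathfrak s)$ (suitably normalized). So I need the $d$-invariant of the tensor product $CF^+(Y,\mathfrak s)\otimes A^+_k(K)$, and the expected identity is $d = d(Y,\mathfrak s) - 2V_k(K)$. From this, $V_k(Y,K)=V_k(K)$, and hence $\nu^+(Y,K)=\nu^+(K)$ after taking the minimum over $\mathfrak s$ as in the definition. The same argument applied to $-Y$ and $-K$ gives the second equality. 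Alternatively, one can organize this as an additivity statement for the lower bound, as the abstract suggests. Concretely, $\nu^+$ is subadditive under connected sum in one direction and is controlled by the unknot summand, and one only needs $\nu^+(Y,K)\ge \nu^+(K)$. That inequality may be easier to get via a cobordism argument, by capping $Y$ with a rational homology ball is unavailable, so instead use the $d$-invariant computation.

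The main obstacle is the $d$-invariant of the tensor product. $CF^+(Y,\mathfrak s)$ may have a nontrivial reduced part, so the $d$-invariant of the tensor product is not simply additive in general. What one needs is only the tower part, i.e.\ the grading of the bottom of the $U$-tower in $HF^+$ of the product. My plan is to reduce to the statement $CF^\infty(Y,\mathfrak s)\simeq \mathbb F[U,U^{-1}]$ shifted by $d(Y,\mathfrak s)$ (true since $Y$ is a rational homology sphere, so $HF^\infty$ is standard). Then I compare the image of $A^+_k$ in $B^+$ via the tower: the map $v_k$ for the connected sum is the tensor product of the identity on $CF^+(Y)$-tower data with $v_k(K)$. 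Its rank on the towers then equals that of $v_k(K)$, shifted by $d(Y,\mathfrak s)$. If this tower-level argument is delicate with $\mathbb F[U]$-torsion, I would fall back on the genus-bound inequalities for $\nu^+$ applied in both directions: $\nu^+(Y,K)\le \nu^+(K)+\nu^+(Y,U)$, which is nonpositive in the unknot term, and a reverse inequality from the concordance in $Y\times I$ between $K\#(-K)\#K$ and $K$.
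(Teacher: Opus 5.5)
Your strategy is the paper's: sandwich $g_{Y\times I}(K)$ between $\max\{\nu^+(Y,K),\nu^+(-Y,-K)\}$ and $g_4(K)$, and show that the unknot summand in $Y$ leaves $\nu^+$ unchanged. The paper does this through Lemma~\ref{UinQHS} and Proposition~\ref{nuadditivity}, i.e.\ Theorem~\ref{KcUY1} with $K'=U$. However, the step you flag as the main obstacle is never actually carried out, and as written it fails in two places. First, $CF^+(Y,\mathfrak s)\otimes_{\mathbb{F}[U]}A^+_k(K)$ is zero, since both factors are $U$-torsion and $U$-divisible; you must work with minus versions. Second, reducing to ``$CF^\infty(Y,\mathfrak s)\simeq\mathbb{F}[U,U^{-1}]$'' uses only an \emph{unfiltered} equivalence. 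That discards $HF_{\mathrm{red}}(Y,\mathfrak s)$, which is exactly what could interfere with the tower map, so it cannot show $V_\xi(Y,K)=V_k(K)$. Additivity of $d$ under tensor products was never the issue.

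The missing ingredient is the \emph{filtered} splitting of the unknot complex, which is the content of Lemma~\ref{UinQHS}. Your remark that $CFK^\infty(Y,U,\mathfrak s)$ is $CF^\infty(Y,\mathfrak s)$ with $i=j$ (place $w,z$ in one region) is correct and gives a direct route to it.

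Step 1: on the tensor product, $\max\{i,j-k\}=i_1+\max\{i_2,j_2-k\}$. Hence $A^-_\xi(Y,K)\cong CF^-(Y,\mathfrak s)\otimes_{\mathbb{F}[U]}A^-_k(K)$ and $B^-_\xi(Y,K)\cong CF^-(Y,\mathfrak s)\otimes_{\mathbb{F}[U]}B^-(K)$, with $v^-=\mathrm{id}\otimes v^-_k$.

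Step 2: since $CF^-(Y,\mathfrak s)$ is free and finitely generated over $\mathbb{F}[U]$, it is homotopy equivalent to $\mathbb{F}[U]\langle x_0\rangle\oplus\bigoplus_m\bigl(\mathbb{F}[U]\xrightarrow{U^{n_m}}\mathbb{F}[U]\bigr)$ with ${\rm gr}(x_0)=d(Y,\mathfrak s)$.

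Step 3: each two-step summand tensored with a free complex is a mapping cone of $U^{n_m}$. On that cone $U^{n_m}$ is null-homotopic, so its homology is torsion.

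Step 4: all non-torsion homology therefore lies in the $x_0$-summand, where the map is $v^-_k(K)$ shifted by $d(Y,\mathfrak s)$. This gives $V_\xi(Y,K)=V_{A_{Y,K}(\xi)}(K)$, and hence $\nu^+_{\mathfrak s}(Y,K)=\nu^+(K)$ for every $\mathfrak s$. In particular, your ``minimum over $\mathfrak s$'' (the definition uses a maximum) is harmless.

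This diagrammatic argument is more direct than the paper's proof of Lemma~\ref{UinQHS}. The paper deduces local triviality from $\nu^+\le g_{Y'\times I}(U)=0$ via \cite[Proposition~6.1]{WY} and \cite[Proposition~3.11]{Hom}, and gets the vanishing Alexander gradings from genus detection.

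Your fallback, by contrast, is not an argument. Subadditivity only gives $\nu^+(Y,K)\le\nu^+(K)$. The concordance $K\#-K\#K\sim K$ lives inside a ball and never relates $Y$ to $S^3$, so it cannot yield the reverse inequality. Some input that genuinely removes $Y$ is needed, such as the chain-level splitting above.
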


More generally, we show that the same phenomenon holds for $\nu^+$-sharp knots in arbitrary rational homology $3$-spheres.


\begin{thm}\label{thm:localknotgenus}
Let $K$ be a $\nu^+$-sharp knot in a rational homology $3$-sphere $Y$, and let $U$ be the unknot in a rational homology $3$-sphere $Y'$. Then $K\# U \subset Y\# Y'$ is $\nu^+$-sharp and
\[
g_{(Y\#Y') \times I}(K\# U)
= g_{Y \times I}(K).
\]
\end{thm}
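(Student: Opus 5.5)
We sandwich $g_{(Y\#Y')\times I}(K\# U)$ between two quantities that both equal $g_{Y\times I}(K)$.

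\emph{Upper bound.} Take a Seifert-framed rational slice surface $F\subset Y\times I$ for $K$. Take a standard disk $D\subset Y'\times I$ for the unknot $U$, which is local. Form the boundary connected sum of $(Y\times I,F)$ and $(Y'\times I,D)$ along arcs $\{\mathrm{pt}\}\times I$ that meet the surfaces only at their boundaries. This gives a surface $F'\subset (Y\#Y')\times I$ with $\partial F'$ equal to the corresponding multiple of $K\#U$. Concretely, $F'$ is obtained from $F$ by a band/disk modification near the boundary, carried out on one sheet for each boundary component, or equivalently on each of the $|[\mu]\cdot[\partial F]|$ strands. Since $D$ is a disk, $\chi(F')=\chi(F)$. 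Moreover, the meridian of $K\#U$ can be identified with that of $K$, so $|[\mu]\cdot[\partial F']|=|[\mu]\cdot[\partial F]|$. The Seifert framing is preserved because the rational longitude of $K\#U$ restricts to that of $K$ on the $Y$ side and to the Seifert framing on the local side. Hence $g_{(Y\#Y')\times I}(K\#U)\le g_{Y\times I}(K)$. The point needing care is the bookkeeping when $\partial F$ consists of several parallel cable components: we do the connected sum with a copy of $D$ once per boundary component of $F$, i.e.\ we boundary-sum with several parallel disks. This still leaves $\chi$ unchanged, since attaching a disk along an arc changes $\chi$ by $1-1=0$.

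\emph{Lower bound.} Here we use the Heegaard Floer side. The key input is additivity of $\nu^+$ under connected sum with an unknot in another rational homology sphere:
\[
\nu^+(Y\#Y',K\#U)=\nu^+(Y,K),
\]
and likewise for the mirrors, $\nu^+(-Y\#-Y',-K\#U)=\nu^+(-Y,-K)$. To prove this, note that the knot Floer complex satisfies $CFK^\infty(Y\#Y',K\#U,\mathfrak s\#\mathfrak s')\simeq CFK^\infty(Y,K,\mathfrak s)\otimes CF^\infty(Y',\mathfrak s')$, where the second factor is a copy of $\mathbb F[U,U^{-1}]$ shifted by $d(Y',\mathfrak s')$. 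The large surgery maps $A_k^+\to B^+$ then split as the corresponding maps for $K$ tensored with the identity on $HF^+(Y')$ in the tower part; reduced Floer homology of $Y'$ does not interfere, since only the $U$-tower matters for the $\nu^+$ definition. The $d$-invariant shifts $d(Y',\mathfrak s')$ cancel between source and target in the definition of $\nu^+$ in \cite{WY}, which is phrased via $V_k$ differences normalized by the $d$-invariant correction terms of $Y$. This yields the equality. Then
\[
g_{Y\times I}(K)=\max\{\nu^+(Y,K),\nu^+(-Y,-K)\}=\max\{\nu^+(Y\#Y',K\#U),\nu^+(-(Y\#Y'),-(K\#U))\}\le g_{(Y\#Y')\times I}(K\#U).
\]
The first equality is $\nu^+$-sharpness of $K$, and the last step is the Wu--Yang bound.

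Combining the two bounds gives equality throughout. This simultaneously shows that $K\#U$ is $\nu^+$-sharp. I expect the main obstacle to be the lower bound, specifically verifying that the Wu--Yang $\nu^+$ for rational homology spheres, which is defined using the spin$^c$ structure and a rational Alexander grading, is invariant under tensoring with $CF^\infty(Y')$. One must check carefully that the spin$^c$ labeling and the grading normalization (via $d(Y,\mathfrak s)$) transform compatibly, and that the choice of $\mathfrak s'$ on $Y'$ does not affect the minimum. As an alternative, one can compare via the four-dimensional characterization of $\nu^+$ through negative-definite cobordisms, but I would try the tensor-product computation first.
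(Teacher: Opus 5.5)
Your argument is the paper's in outline. Theorem~\ref{thm:localknotgenus} is Theorem~\ref{KcUY1} with $K'=U$, proved by sandwiching the genus between the subadditivity bound of Lemma~\ref{lem:subadditivity} (your disk construction is the case $F'=D$) and $\max\{\nu^+(Y,K),\nu^+(-Y,-K)\}$. The lower bound uses K\"unneth-based additivity of $\nu^+$ (Proposition~\ref{nuadditivity}) and the Wu--Yang bound in both orientations.

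The one step that is not justified is the one you dispose of in a sentence. As a filtered complex, $CFK^\infty(Y',U,\mathfrak s')$ is $CF^\infty(Y',\mathfrak s')$ with $j=i$. It is \emph{not} a copy of $\mathbb F[U,U^{-1}]$ unless $Y'$ is an $L$-space in $\mathfrak s'$. The complex that computes $V$ is $A^-_{\xi\#\xi'}(K\#U)\simeq A^-_\xi(K)\otimes_{\mathbb F[U]}CF^-(Y',\mathfrak s')$ (with $A_{Y',U}(\xi')=0$). This sees all of $CF^-(Y')$, including whatever carries $HF_{\mathrm{red}}(Y')$. So ``only the $U$-tower matters'' is the statement to be proved: you must show this torsion cannot change the top grading of the non-torsion part of $H_*(A^-)$. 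The paper supplies exactly this in Lemma~\ref{UinQHS}. There it gives a filtered splitting $CFK^\infty(Y',U,\mathfrak t)\simeq CFK^\infty(S^3,U)\oplus A$ with $A$ acyclic and all generators in Alexander grading $0$, a fact you also use tacitly. The tensor product is then $CFK^\infty(Y,K,\mathfrak s)$ plus an acyclic summand whose contribution to $H_*(A^-)$ is $U$-torsion.

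You can close this directly, without the detour through \cite[Proposition~6.1]{WY}. Since $CF^-(Y',\mathfrak s')$ is finitely generated and free over the PID $\mathbb F[U]$, it is homotopy equivalent to $\mathbb F[U]\langle t\rangle\oplus\bigoplus_k(x_k\to U^{n_k}y_k)$ with $\mathrm{gr}(t)=d(Y',\mathfrak s')$. Tensoring $A^-_\xi(K)$ with $x_k\to U^{n_k}y_k$ gives the mapping cone of $U^{n_k}$, whose homology is killed by $U^{2n_k}$. So the non-torsion part of $H_*(A^-_{\xi\#\xi'}(K\#U))$ comes from $A^-_\xi(K)\otimes t$, and its top grading is shifted by exactly $d(Y',\mathfrak s')$. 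Additivity of $d$ then gives $V_{\xi\#\xi'}(Y\#Y',K\#U)=V_\xi(Y,K)$ for every $\mathfrak s'$. Hence $\nu^+(Y\#Y',K\#U)=\nu^+(Y,K)$, and likewise for $(-Y,-K)$.

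A smaller point on the upper bound: the boundary sum with disks must be done once per strand of $\partial F$ crossing the meridional annulus, i.e.\ $|[\mu]\cdot[\partial F]|$ times, not once per boundary component. A simpler route: $K\#U$ is just $K\subset Y\smallsetminus B^3\subset Y\#Y'$, and a generic $F$ misses $\{p\}\times I$. So $F$ itself is a Seifert-framed rational slice surface for $K\#U$, since $K$ has the same rational longitude in $Y\#Y'$ as in $Y$.
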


\subsection*{Additivity of rational slice genus}
The two theorems above are consequences of a more general result concerning the additivity of the rational slice genus.

Let $K$ and $K'$ be knots in rational homology $3$--spheres $Y$ and $Y'$, respectively. For the rational slice genus, we have the usual subadditivity (see Lemma~\ref{lem:subadditivity}):
\[
g_{(Y\#Y') \times I}(K\#K') \leq g_{Y \times I}(K) + g_{Y' \times I}(K').
\]
This inequality can be strict, while the $3$--dimensional analogue is additive; see~\cite[Lemma~5.1]{NV}.

We prove that if $K$ is $\nu^+$-sharp and $K'$ satisfies a suitable condition, then additivity holds for the rational slice genus. Roughly speaking, the condition on $K'$ is that, for each $\rm Spin^c$ structure, its knot Floer complex splits, up to acyclic summands, as a shifted copy of the unknot complex. We call such knots \emph{totally locally trivial}; see Definition~\ref{def:localtrivial}.


\begin{thm}\label{KcUY1}
Let $K$ and $K'$ be $\nu^+$-sharp knots in rational homology $3$-spheres $Y$ and $Y'$ respectively. If, in addition, $K'$ is totally locally trivial, then $K\#K'$ is $\nu^+$-sharp and
\[
g_{(Y\# Y')\times I}(K\# K') = g_{Y\times I}(K) + g_{Y'\times I}(K').
\]
\end{thm}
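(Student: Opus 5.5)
The argument is a sandwich: the subadditivity of Lemma~\ref{lem:subadditivity} gives the upper bound, and the $\nu^+$ lower bound from \cite{WY} gives the lower bound. Set $g=g_{Y\times I}(K)$ and $g'=g_{Y'\times I}(K')$. Lemma~\ref{lem:subadditivity} gives
\[
g_{(Y\#Y')\times I}(K\#K')\le g+g'.
\]
By \cite{WY},
\[
\max\{\nu^+(Y\#Y',K\#K'),\,\nu^+(-Y\#-Y',-K\#-K')\}\le g_{(Y\#Y')\times I}(K\#K').
\]
Both the equality of genera and the $\nu^+$-sharpness of $K\#K'$ therefore follow once I show
\[
\max\{\nu^+(K\#K'),\nu^+(-K\#-K')\}\ge g+g'.
\]
Since $K$ is $\nu^+$-sharp, I may assume without loss of generality that $\nu^+(Y,K)=g$, after replacing $(Y,K),(Y',K')$ by $(-Y,-K),(-Y',-K')$ if necessary. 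This replacement preserves total local triviality, because dualizing a complex that is, up to acyclic summands, a shifted copy of the unknot complex yields another such complex. It remains to compare $\nu^+(K\#K')$ with $\nu^+(K)+\nu^+(K')$, and separately with $\nu^+(-K')$.

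The key step is a connected-sum formula for $\nu^+$ when $K'$ is totally locally trivial:
\[
\nu^+(Y\#Y',K\#K')=\nu^+(Y,K)+\nu^+(Y',K'),
\]
interpreted spin$^c$-wise, i.e.\ with the correct matching of spin$^c$ structures and the rational normalization. The plan is as follows.
\begin{itemize}
\item Use the Künneth formula: $CFK^\infty(Y\#Y',K\#K',\mathfrak s\#\mathfrak s')\simeq CFK^\infty(Y,K,\mathfrak s)\otimes CFK^\infty(Y',K',\mathfrak s')$.
\item By Definition~\ref{def:localtrivial}, each factor $CFK^\infty(Y',K',\mathfrak s')$ is chain homotopy equivalent to a bigraded shift of the unknot complex $\mathbb F[U,U^{-1}]$ plus an acyclic summand.
\item Tensoring with an acyclic complex gives an acyclic complex, so the tensor product is, up to acyclics, $CFK^\infty(Y,K,\mathfrak s)$ with its $(i,j)$-filtration shifted by the amounts recorded by $K'$ in $\mathfrak s'$.
\item Acyclic summands do not affect the large-surgery maps $A^+_k\to B^+$ used to define $\nu^+$, so the minimal $k$ at which $v_k$ becomes the $U$-tower-preserving map shifts by exactly the $K'$-shift. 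That shift equals $\nu^+(Y',K')$, because for a locally trivial complex the $j$-shift relative to $i$ is exactly what $\nu^+$ detects.
\end{itemize}
Writing this out requires care with the rational Alexander gradings and with the $d$-invariant normalization in \cite{WY}'s definition of $\nu^+$ for rational homology spheres.

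To finish, I also need $\nu^+(Y',K')=g'$, not merely $\nu^+(-Y',-K')=g'$. For a totally locally trivial knot, each complex is a shifted unknot complex, so I expect $\nu^+(Y',K')$ and $\nu^+(-Y',-K')$ to be determined by the same shift data in a symmetric way; concretely, one should be the maximum and the other governed by the same Alexander shift, with the genus forced to equal the larger. If the two are not equal, I will instead pair signs correctly. The formula applied to $(-Y,-K),(-Y',-K')$ gives
\[
\nu^+(-K\#-K')=\nu^+(-K)+\nu^+(-K'),
\]
and then a case analysis on which of $\nu^+(\pm K)$ and $\nu^+(\pm K')$ realizes the respective genus is needed. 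I expect this matching to be the main obstacle: both $K$ and $K'$ must attain their maxima with the same sign. I will try to show that for a totally locally trivial $\nu^+$-sharp knot, $\nu^+(Y',K')=\nu^+(-Y',-K')$. The hope is that $\nu^+$ of a shifted unknot complex reduces to a quantity such as $\max_{\mathfrak s}$ of the Alexander shift, which is invariant under the orientation-reversal duality, since the dual of a shifted unknot complex is the oppositely shifted unknot complex over the conjugate spin$^c$ structure.
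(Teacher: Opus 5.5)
Your overall architecture matches the paper's proof. The paper also sandwiches using Lemma~\ref{lem:subadditivity} and the bound of \cite{WY}, reduces via $g_{Y\times I}(K)=g_{(-Y)\times I}(-K)$ to the case $g=\nu^+(Y,K)$, and uses spin$^c$-wise additivity $\nu^+_{\mathfrak s\#\mathfrak t}(K\#K')=\nu^+_{\mathfrak s}(K)+r_{\mathfrak t}$ from the K\"unneth formula, maximized over ${\rm Spin^c}(Y)\times{\rm Spin^c}(Y')$; this is Proposition~\ref{nuadditivity}.

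The gap is the step you flag as the main obstacle, and the heuristic you give for it is wrong as stated. You need $\nu^+(Y',K')=\nu^+(-Y',-K')$. You correctly note that $CFK^\infty(-Y',-K',J\mathfrak t)\simeq CFK^\infty(Y',K',\mathfrak t)^*$ is the oppositely shifted unknot complex. That is exactly why the maximal shift is \emph{not} automatically invariant under duality:
\[
\nu^+(Y',K')=\max_{\mathfrak t} r_{\mathfrak t},\qquad \nu^+(-Y',-K')=\max_{\mathfrak t}(-r_{\mathfrak t})=-\min_{\mathfrak t} r_{\mathfrak t}.
\]
What you need is that the set $\{r_{\mathfrak t}\}$ is symmetric about $0$, and your proposal does not supply this. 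The paper proves it in Lemma~\ref{tltnusym} from $r_{\mathfrak t}=\frac12 d(Y',\mathfrak t)-\frac12 d(Y',\mathfrak t+PD[K'])$ (Lemma~\ref{Amdrspc}) and $d(Y',J\mathfrak u)=d(Y',\mathfrak u)$. Together these show that the involution $\mathfrak t\mapsto J\mathfrak t-PD[K']$ of ${\rm Spin^c}(Y')$ satisfies $r_{J\mathfrak t-PD[K']}=-r_{\mathfrak t}$, so $\max r=-\min r$. Alternatively, you could use the $i\leftrightarrow j$ conjugation symmetry of $CFK^\infty$, which moves the underlying spin$^c$ structure by $PD[K']$. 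This equality needs only total local triviality, not $\nu^+$-sharpness of $K'$.

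This ingredient is essential, and your fallback case analysis cannot replace it. Suppose $g=\nu^+(Y,K)>\nu^+(-Y,-K)$ and $g'=\nu^+(-Y',-K')>\nu^+(Y',K')$. Then both $\nu^+(Y,K)+\nu^+(Y',K')$ and $\nu^+(-Y,-K)+\nu^+(-Y',-K')$ are strictly less than $g+g'$, and the lower bound does not close. Once the symmetry is established, the rest of your argument goes through as in the paper. That includes the $\nu^+$-sharpness of $K\#K'$ via additivity applied to $(-Y,-K)$ and $(-Y',-K')$.
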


We note that some hypothesis on $K'$ is necessary: if $K\subset S^3$ has positive slice genus and we take $K'=-K\subset S^3$, then additivity fails. Moreover, requiring $K'$ to be merely $\nu^+$-sharp is still not sufficient. Indeed, there are $\nu^+$-sharp knots $K\subset S^3$, arising as connected sums of two torus knots~\cite{BCG:2017, Feller-Park:2021}, for which both $K$ and $-K$ are $\nu^+$-sharp.

In Lemma~\ref{UinQHS}, we prove that the unknot in a rational homology $3$-sphere is totally locally trivial and $\nu^+$-sharp. Thus, Theorems~\ref{thm:localknotgenus-S3} and~\ref{thm:localknotgenus} follow directly by taking $K'=U$ in Theorem~\ref{KcUY1}.

Using Theorem~\ref{KcUY1}, we can also consider another common class of totally locally trivial knots, namely \emph{Floer simple knots}. Simple knots were introduced by Berge in his study of lens space surgeries~\cite{Berge}. Recall that a \emph{lens space} is a closed, oriented $3$-manifold other than $S^3$ and $S^1\times S^2$ admitting a genus $1$ Heegaard splitting. A pair of compressing disks, one in each Heegaard solid torus, is called \emph{standard} if their boundary circles are in minimal position on the Heegaard torus. A \emph{simple knot} in a lens space is then obtained by taking the union of two properly embedded arcs, one in each of these standard compressing disks. Moreover, for each homology class in a lens space, one can explicitly construct a simple knot representing it.

Heegaard Floer theory associates to each closed $3$-manifold $Y$ the group $\widehat{HF}(Y)$, and to a knot $K\subset Y$ the group $\widehat{HFK}(K)$. When $Y$ is a rational homology $3$-sphere, their ranks satisfy
\[
\mathrm{rk}\,\widehat{HFK}(K) \ge \mathrm{rk}\,\widehat{HF}(Y) \ge \lvert H_1(Y;\mathbb{Z})\rvert.
\]
We say that $Y$ is an \emph{$L$-space} if the second inequality is an equality. A knot in an $L$-space is called \emph{Floer simple} if, furthermore, the first inequality is an equality. In particular, lens spaces are $L$-spaces, and simple knots in lens spaces provide fundamental examples of Floer simple knots. More examples of Floer simple knots are given in Section~\ref{Floer simple knots}.

For Floer simple knots, $\nu^+$ detects an analogue of the Seifert genus for knots in rational homology $3$-spheres, namely the rational Seifert genus. More precisely, the \emph{rational Seifert genus} of $K$ is defined as
\begin{equation*}\label{Qseifertgenus}
g_{Y}(K):=\min_{S} \dfrac{-\chi(S)}{2|[\mu] \cdot [\partial S]|} + \frac{1}{2},
\end{equation*}
where the minimum is taken over all rational Seifert surfaces $S$ for $K$ (see Section~\ref{Rational slice genus} for the precise definition).\footnote{We normalize the rational Seifert genus in~\cite{NW0, WY} by adding $\frac{1}{2}$ so that this definition agrees with the Seifert genus when $K$ is a knot in $S^3$.}
For a Floer simple knot $K$ in  $Y$, we have
\[
\nu^+(Y,K)=g_{Y \times I}(K)=g_Y(K).
\]
See~\cite[Proposition~5.1]{NW0} and~\cite[Theorems~1.1, 1.3, and~1.4]{WY}.

Taking $K'$ to be a Floer simple knot in Theorem~\ref{KcUY1}, we immediately obtain the following:
\begin{cor}\label{4gad}
Let $K$ be a $\nu^+$-sharp knot in a rational homology $3$-sphere $Y$. If $K'$ is a Floer simple knot in an $L$-space $Y'$, then $K\#K'$ is $\nu^+$-sharp and
\begin{equation*}\pushQED{\qed}
g_{(Y\#Y') \times I}(K\#K')
= g_{Y \times I}(K)+ g_{Y' \times I}(K')
= g_{Y \times I}(K)+ g_{Y'}(K').
\end{equation*}
Moreover, if $K\subset Y$ is a local knot, then
\begin{equation*}
g_{(Y\#Y') \times I}(K\#K')
= g_{Y \times I}(K)+ g_{Y' \times I}(K')
= g_{4}(K)+ g_{Y'}(K').\qedhere
\end{equation*}
\end{cor}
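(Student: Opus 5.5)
The corollary will follow from Theorem~\ref{KcUY1} with $K'$ the given Floer simple knot. The work is to check that a Floer simple knot $K'\subset Y'$ in an $L$-space satisfies the two hypotheses there: it is $\nu^+$-sharp, and it is totally locally trivial. After that, the genus identities come from the cited results and from Theorem~\ref{thm:localknotgenus-S3}.

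\emph{Total local triviality.} Since $\mathrm{rk}\,\widehat{HFK}(Y',K')=\mathrm{rk}\,\widehat{HF}(Y')=|H_1(Y';\mathbb{Z})|$, each $\mathrm{Spin}^c$ structure $\xi$ on $Y'$ contributes exactly one generator to $\widehat{HFK}(Y',K',\xi)$. I would then argue that the full knot Floer complex $CFK^\infty(Y',K',\xi)$ is filtered chain homotopy equivalent to a single copy of $\mathbb{F}[U,U^{-1}]$, with one generator placed at some bifiltration level, say $(0,d_\xi)$ after the appropriate grading shift. The reason is that the associated graded complex has rank one, so after filtered reduction no differentials remain. This is exactly a shifted copy of the unknot complex with no acyclic summands, which is what Definition~\ref{def:localtrivial} asks for.

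\emph{$\nu^+$-sharpness.} By~\cite[Proposition~5.1]{NW0} and~\cite[Theorems~1.1, 1.3, 1.4]{WY}, we have $\nu^+(Y',K')=g_{Y'\times I}(K')=g_{Y'}(K')$. Since $\nu^+(Y',K')$ is already one of the two terms in the maximum and the maximum is bounded above by $g_{Y'\times I}(K')$, the maximum equals $g_{Y'\times I}(K')$. So $K'$ is $\nu^+$-sharp.

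\emph{Conclusion.} Theorem~\ref{KcUY1} now gives that $K\#K'$ is $\nu^+$-sharp and that the first equality holds. The second equality is the identity $g_{Y'\times I}(K')=g_{Y'}(K')$ for Floer simple knots. For the "moreover" statement, if $K$ is a local knot coming from a knot in $S^3$, then $g_{Y\times I}(K)=g_4(K)$ by Theorem~\ref{thm:localknotgenus-S3}. This step presumes that $K$ is $\nu^+$-sharp as a knot in $S^3$; by Theorem~\ref{thm:localknotgenus-S3} that makes the local knot $\nu^+$-sharp in $Y$, matching the setup. Alternatively, one can reduce the local case to the $S^3$ case via Theorem~\ref{thm:localknotgenus}.

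The main obstacle is the total local triviality claim. One must be careful that the rank-one condition on the hat version really forces the whole $\mathbb{F}[U]$-complex, including both filtrations, to be a single staircase of length zero. I would handle this with the standard reduction lemma: choose a filtered basis that is simultaneously reduced with respect to both filtrations, then use rank one per $\mathrm{Spin}^c$ structure to rule out any arrows.
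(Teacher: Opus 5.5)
Your proposal is correct and is essentially the paper's argument. You show that a Floer simple knot has rank-one $\widehat{HFK}$ in each $\mathrm{Spin}^c$ structure, hence is totally locally trivial, and that $\nu^+(Y',K')=g_{Y'\times I}(K')$ makes it $\nu^+$-sharp, so Theorem~\ref{KcUY1} applies; the local case then comes from Theorem~\ref{thm:localknotgenus-S3}, under exactly the reading you flag, namely that $K$ is $\nu^+$-sharp as a knot in $S^3$. The only small deviation is that your sandwich $\nu^+(Y',K')\le\max\{\nu^+(Y',K'),\nu^+(-Y',-K')\}\le g_{Y'\times I}(K')$ gives sharpness without the symmetry from Lemma~\ref{tltnusym} that the paper cites, which is a harmless simplification.
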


With this corollary, it is possible to compute the rational slice genera of various knots. For instance, the rational slice genus of the knot in Figure~\ref{UknotH} can be determined, where we perform $n$-surgery along an $L$-space knot $J\subset S^3$ for sufficiently large $n$, and $K$ is a $\nu^+$-sharp knot in $S^3$.

\begin{figure}[H]
\centering
\includegraphics[width=0.3\textwidth]{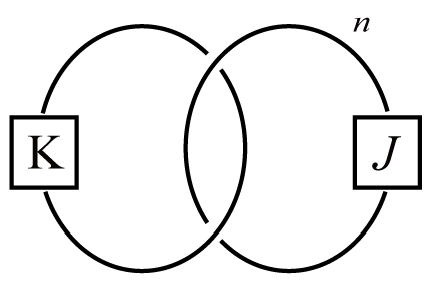}
\caption{A knot for which the rational slice genus can be computed using Corollary~\ref{4gad}.}
\label{UknotH}
\end{figure}

Note that when $n>2g(J)-1$, $n$-surgery on $J$ yields an $L$-space, and the meridian of $J$ in the resulting $L$-space is Floer simple~\cite{Gre15,Ras07,Hed11}, where $g(J)$ denotes the Seifert genus of $J$. More examples will be presented in Section~\ref{sec:examples_nu_sharp_floer_simple}, including explicit families of $\nu^+$-sharp knots and Floer simple knots.

The proof of Theorem~\ref{KcUY1} is based on Proposition~\ref{nuadditivity}, which establishes an additivity result for the $\nu^+$-invariant. In general, as in the case of knots in $S^3$, the $\nu^+$-invariant is only subadditive; see Theorem~\ref{thm:nuSubadditivity}. In Proposition~\ref{nuadditivity}, we give a necessary and sufficient condition for a knot to satisfy $\nu^+$-additivity when taking connected sums with arbitrary knots. Both the additivity and subadditivity results for $\nu^+$ may be of independent interest.

\subsection*{Organization}
The paper is organized as follows. In Section~\ref{Preliminaries}, we review some preliminaries, including the definition of the rational slice genus and background on knot Floer homology. In Section~\ref{(Sub)additivities}, we prove the subadditivity and additivity results for the $\nu^+$-invariant. In Section~\ref{Proof of main results}, we prove our main results. Section~\ref{sec:examples_nu_sharp_floer_simple} presents examples of $\nu^+$-sharp knots. Throughout, we work over $\mathbb{F}=\mathbb{Z}/2\mathbb{Z}$.

\subsection*{Acknowledgements}
We would like to thank Zhechi Chen, Stefan Friedl, Matthew Hedden, and Jennifer Hom for helpful discussions and suggestions. The first author is partially supported by the Samsung Science and Technology Foundation (SSTF-BA2102-02) and by the NRF grant RS-2025-00542968. The second author is partially supported by a grant from the Research Grants Council of Hong Kong Special Administrative Region, China (Project No. 14301825).
The third author is supported by the National Natural Science Foundation of China (Project No. 12301087), Fujian Provincial Natural Science Foundation of China (Project No. 2024J08012), and Fundamental Research Funds for the Central Universities (Project No. 20720230026).

We recently learned that Stefan Friedl, Tejas Kalelkar, Jos{\'e} Pedro Quintanilha, and Tanushree Shah have an upcoming paper on an analogous problem. Whereas our paper focuses on the slice genus of local knots, theirs investigates the unknotting number and the Gordian distance~\cite{FKQS2026}.

\section{Preliminaries}\label{Preliminaries}

\subsection{Rational Seifert surfaces and slice surfaces}\label{Rational slice genus}

We first recall the definitions related to the rational longitude; see~\cite[Section~1]{WY} for a more detailed discussion.
Let $K \subset Y$ be a knot in a rational homology $3$-sphere, and set
\[
M = Y \smallsetminus N^\circ(K),
\]
where $N(K)$ denotes a tubular neighborhood of $K$ and $N^\circ(K)$ its interior.
Note that the inclusion map
\[
i_{\ast}\colon H_{1}(\partial M;\mathbb{Z}) \rightarrow H_{1}(M;\mathbb{Z})
\]
has kernel isomorphic to $\mathbb{Z}$.
Thus there exists a primitive class $\lambda_r\in H_{1}(\partial M;\mathbb{Z})$ and a positive integer $k$ such that
\[
\ker(i_\ast)=\langle k\lambda_r\rangle .
\]
The class $\lambda_r$ determines a well-defined slope on $\partial M$, called the \textit{rational longitude} of $K$.

A \textit{rational Seifert surface} for $K$ is a properly embedded, compact, connected, oriented surface $S\subset M$ such that
\[
[\partial S]=k\lambda_r\in H_{1}(\partial M;\mathbb{Z}).
\]

A \textit{Seifert framed rational slice surface} for $K$ is a compact, connected, oriented surface $F$ smoothly embedded in
\[
(Y\times[0,1])\smallsetminus N^{\circ}(K)
\]
such that $\partial F=F\cap \partial N(K)$ and
\[
[\partial F]=k\lambda_r\in H_{1}(\partial N(K);\mathbb{Z}),
\]
where $N(K)$ denotes a solid torus neighborhood of $K$ in $Y \times \{1\}$. We call $F$ \textit{Seifert framed} because its boundary has the same slope as a rational Seifert surface for $K$.




\subsection{Knot Floer complexes}
Let $K$ be a knot in a rational homology $3$-sphere $Y$. Fix a doubly pointed Heegaard diagram $(\Sigma,\alpha,\beta,w,z)$ for $(Y,K)$ and a $\rm Spin^c$ structure $\mathfrak{s}$ on $Y$. The knot Floer complex $C_{\mathfrak{s}}=CFK^{\infty}(Y,K,\mathfrak{s})$ is generated by triples $[x,i,j]$, where $x\in \mathbb{T}_{\alpha}\cap \mathbb{T}_{\beta}$ satisfies $\mathfrak{s}_{w}(x)=\mathfrak{s}$, $i\in \mathbb{Z}$, and
\[
j-i=A_{Y,K}\bigl(\underline{\mathfrak{s}}_{w,z}(x)\bigr).
\]
Here $A_{Y,K}\colon \underline{\rm Spin^c}(Y,K)\to \mathbb{Q}$ denotes the Alexander grading on relative ${\rm Spin^c}$ structures associated to $(Y,K)$. The differential is the usual one defined by counting holomorphic disks. See~\cite[Section~2.4]{HL} for more details.

Let $\underline{\rm Spin^c}(Y,K,\mathfrak{s})$ denote the set of relative $\rm Spin^c$ structures with underlying ${\rm Spin^c}$ structure $\mathfrak{s}$, that is, those $\xi$ with $G_{Y,K}(\xi)=\mathfrak{s}$. For $\xi\in \underline{\rm Spin^c}(Y,K,\mathfrak{s})$, define
\[
A^-_{\xi}(K)=C_{\mathfrak{s}}\{\max\{\,i,\,j-A_{Y,K}(\xi)\,\}\leq 0\}
\qquad\text{ and }\qquad
B^-_{\xi}(K)=C_{\mathfrak{s}}\{i\leq 0\}.
\]
In particular,
\[
B^-_{\xi}(K)=C_{\mathfrak{s}}\{i\leq 0\}\cong CF^-(Y,\mathfrak{s}),
\]
and $A^-_{\xi}(K)$ is quasi-isomorphic to the complex $CF^-$ of a sufficiently large surgery along $K$ in a suitable ${\rm Spin^c}$ structure.
 
\subsection{$V$-invariants}

For a knot $K\subset S^3$, Ni and the second author defined a family of $\mathbb{Z}_{\geq 0}$-valued invariants $\{V_k(K)\}_{k\in \mathbb{Z}}$ in~\cite{NW1}. The analogous $V$-invariants for a knot $K$ in a rational homology $3$-sphere $Y$ can be defined in a similar manner as follows; see~\cite[Section~5.1]{WY}.

Let $v^-_{\xi}\colon A^-_{\xi}(K)\to B^-_{\xi}(K)$ be the inclusion map. It induces
\[
v^-_{\xi,\ast}\colon H_{\ast}(A^-_{\xi}(K)) \longrightarrow H_{\ast}(B^-_{\xi}(K)) \cong HF^-(Y,\mathfrak{s}).
\]
Both $H_{\ast}(A^-_{\xi}(K))$ and $H_{\ast}(B^-_{\xi}(K))$ are isomorphic to the direct sum of $\mathbb{F}[U]$ and a finite-dimensional $U$-torsion module. The map $v^-_{\xi,\ast}$ induces a homogeneous, non-zero map between the free parts $\mathbb{F}[U]$, which is necessarily multiplication by $U^{N}$ for some non-negative integer $N$. We define
\[
V_{\xi}(Y,K):=N.
\]

The \textit{correction term} $d(Y,\mathfrak{s})$ is defined as the maximum Maslov grading of any non-torsion element in $HF^-(Y,\mathfrak{s})$. Since $U$ decreases the Maslov grading by two, we can reformulate the definition of $V$-invariants as follows:
\[
V_{\xi}(Y,K):=\frac{1}{2}\Bigl(d(Y,\mathfrak{s})-\max\bigl\{{\rm gr}(x)\colon x\in H_{\ast}(A^-_{\xi}(K))\ \textit{\rm is homogeneous and non-torsion}\bigr\}\Bigr),
\]
where ${\rm gr}$ denotes the Maslov grading and $\mathfrak{s}=G_{Y,K}(\xi)$.





\subsection{$\nu^+$-invariants}

The \textit{$\nu^+$-invariant} of a knot $K\subset S^3$ is defined by
\[
\nu^+(K):=\min\{k\in \mathbb{Z}\mid V_k(K)=0\}.
\]
We define the $\nu^+$-invariants of knots in rational homology $3$-spheres similarly.

\begin{defn}[{\cite[Definition~2.7]{WY}}]\label{nudef}
Let $K$ be a knot in a rational homology $3$-sphere $Y$. Given $\mathfrak{s}\in {\rm Spin^c}(Y)$, define
\[
\nu^+_{\mathfrak{s}}(Y,K):=\min\{A_{Y,K}(\xi)\mid \xi\in \underline{\rm Spin^c}(Y,K,\mathfrak{s})\ \textit{\rm and}\ V_{\xi}(Y,K)=0\}
\]
and
\[
\nu^+(Y,K):=\mathop{\max}\limits_{\mathfrak{s}\in {\rm Spin^c}(Y)} \nu^+_{\mathfrak{s}}(Y,K).
\]
\end{defn}

\noindent When $Y=S^3$, the Alexander grading $A_{Y,K}$ identifies $\underline{\rm Spin^c}(S^3,K)$ with $\mathbb{Z}$ and recovers the usual indexing of the invariants $V_k(K)$.

The $\nu^+$-invariant gives a lower bound on the rational slice genus.

\begin{thm}[{\cite[Theorem~1.4]{WY}}]\label{nuleqprsg}
For any knot $K$ in a rational homology $3$-sphere $Y$, we have
\[
\nu^+(Y,K)\leq g_{Y\times I}(K).
\]
In particular, Floer simple knots attain equality.
\end{thm}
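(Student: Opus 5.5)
This inequality is \cite[Theorem~1.4]{WY}, so in the paper it is simply cited. If I were to prove it, I would adapt the argument of Hom and the second author for knots in $S^3$ \cite{HW}, replacing integer surgery by large surgery on a rationally null-homologous knot. Fix a Seifert framed rational slice surface $F$ for $K$, with $[\partial F]=k\lambda_r$ and $q:=|[\mu]\cdot[\partial F]|$. The goal is to show that $V_\xi(Y,K)=0$ for every $\xi\in\underline{\rm Spin^c}(Y,K,\mathfrak s)$ with
\[
A_{Y,K}(\xi)\ \ge\ \frac{-\chi(F)}{2q}+\frac12 .
\]
Taking the minimum over $\xi$ with $V_\xi=0$ and then the maximum over $\mathfrak s$ would then give $\nu^+(Y,K)\le g_{Y\times I}(K)$.

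\textbf{Step 1 (topology).} Attach a $2$-handle to $Y\times I$ along $K\subset Y\times\{1\}$ with a large framing $m$. Reversing orientation gives a cobordism $W_m\colon Y_m(K)\to Y$, which is negative definite for $m\gg0$ because $b_2=1$ and the square of the generator is $-m$ up to a positive rational factor. Cap $\partial F$ by $k$ parallel copies of the core of the handle, joined along the slope $\lambda_r$. This gives a closed surface $\widehat F\subset W_m$ whose genus is controlled by $\chi(F)$. I would then record $[\widehat F]^2$ and the pairings $\langle c_1(\mathfrak t),[\widehat F]\rangle$ for the ${\rm Spin^c}$ structures $\mathfrak t$ on $W_m$. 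These pairings are determined by $q$, $m$ and the Alexander grading $A_{Y,K}(\xi)$ of the relative ${\rm Spin^c}$ structure corresponding to $\mathfrak t|_{Y_m(K)}$.

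\textbf{Step 2 (algebra).} By the large surgery theorem for rational homology spheres, $A^-_\xi(K)\simeq CF^-(Y_m(K),\mathfrak s_\xi)$. Under this identification the inclusion $v^-_\xi$ corresponds to the cobordism map $F^-_{W_m,\mathfrak t_\xi}$ for a suitable $\mathfrak t_\xi$. Its grading shift $(c_1(\mathfrak t_\xi)^2-2\chi(W_m)-3\sigma(W_m))/4$ then expresses $V_\xi$ as
\[
V_\xi=\tfrac12\bigl(d(Y,\mathfrak s)-d(Y_m(K),\mathfrak s_\xi)-\text{shift}\bigr).
\]
Negative definiteness gives $V_\xi\ge0$, with equality exactly when the map on the free parts is an isomorphism in the top grading.

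\textbf{Step 3 (main obstacle).} The hardest step is to show that this equality holds when $A_{Y,K}(\xi)$ exceeds the genus bound. I would first try the trick from \cite{HW}. Compare $\mathfrak t_\xi$ with $\mathfrak t_\xi\pm{\rm PD}[\widehat F]$. The Ozsváth--Szabó adjunction relation for cobordism maps, applied to $\widehat F$, says the maps for these adjacent ${\rm Spin^c}$ structures agree up to a power of $U$ once $|\langle c_1,[\widehat F]\rangle|+[\widehat F]^2>2g(\widehat F)-2$. This forces the relevant $V$ values to stabilize, $V_\xi=V_{\xi+{\rm PD}[\mu]}$, which together with $V_\xi\to0$ as $A\to\infty$ gives $V_\xi=0$. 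The delicate part is the bookkeeping of rational Alexander gradings and the factor $q$, which should produce exactly $\frac{-\chi(F)}{2q}+\frac12$.

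Finally, for Floer simple knots I would obtain equality by combining $\nu^+(Y,K)=g_Y(K)$ from \cite[Proposition~5.1]{NW0} with the trivial inequality $g_{Y\times I}(K)\le g_Y(K)$. The latter holds because pushing a rational Seifert surface into $Y\times I$ gives a Seifert framed rational slice surface.
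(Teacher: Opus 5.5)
The paper does not prove this statement. It quotes the inequality from \cite[Theorem~1.4]{WY}, and the Floer simple equality from \cite[Proposition~5.1]{NW0} together with \cite{WY}. Your paragraph on Floer simple knots follows the same chain and is fine; you need to translate the $d$-invariant statement of \cite{NW0} into one about $\nu^+$ via Lemma~\ref{Amdrspc} and Remark~\ref{rem:localtrivial_nu}. Your sketch of the inequality itself, however, has two genuine gaps.

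First, your reduction is too weak in the rational setting. For fixed $\mathfrak s$, the fibre $\underline{\rm Spin^c}(Y,K,\mathfrak s)$ is a single ${\rm PD}[\mu]$-orbit, and $A_{Y,K}(\xi+{\rm PD}[\mu])=A_{Y,K}(\xi)+1$. Hence $\nu^+_{\mathfrak s}\in r_{\mathfrak s}+\mathbb Z$. These cosets genuinely vary with $\mathfrak s$; for a core of $L(p,1)$ they are pairwise distinct mod $\mathbb Z$. By contrast, $g=\frac{-\chi(F)}{2q}+\frac12$ is a single rational number. So knowing $V_\xi=0$ for $A(\xi)\ge g$ only gives $\nu^+_{\mathfrak s}<g+1$ for every $\mathfrak s$ whose coset misses $g$. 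What you need is $V_\xi=0$ whenever $A_{Y,K}(\xi)>g-1$. This is in fact equivalent to the theorem, because $V$ is non-increasing along each fibre. Your genus computation has to deliver this sharper, strict threshold.

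Second, the mechanism in Step 3 cannot produce $V_\xi=V_{\xi+{\rm PD}[\mu]}$. Since $\widehat F$ is closed and lies in the interior of $W_m$, ${\rm PD}[\widehat F]$ comes from $H^2(W_m,\partial W_m)$ and restricts trivially to both ends. So $\mathfrak t_\xi$ and $\mathfrak t_\xi\pm{\rm PD}[\widehat F]$ all restrict to the same $\mathfrak s_\xi$ on $Y_m(K)$. Any adjunction relation among them therefore compares maps out of the single group $H_*(A^-_\xi(K))$; for $Y=S^3$, one of these structures is the one computing $h_k$. But $V_{\xi+{\rm PD}[\mu]}$ is computed on a different ${\rm Spin^c}$ structure of $Y_m(K)$, so no such relation can compare the two.

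Worse, $[\widehat F]^2<0$ in the negative definite $W_m$, and the hypothesis you quote points the wrong way. Take $Y=S^3$ with large $N$-surgery. The structure $\mathfrak x_k$ computing $v_k$ has $\langle c_1(\mathfrak x_k),[\widehat F]\rangle=2k-N$ and $[\widehat F]^2=-N$. So $|\langle c_1(\mathfrak x_k),[\widehat F]\rangle|+[\widehat F]^2>2g-2$ holds exactly when $k\le -g$. That is precisely the range where $V_k=-k$ is still strictly decreasing, not where it stabilizes.

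So Step 3 is not bookkeeping; it is the missing core of the proof. You need an actual genus bound for $V$ extended to Seifert framed rational slice surfaces. In $S^3$ this is Rasmussen's $V_k(K)=0$ for $k\ge g_4(K)$ \cite{Rasmussen:2003}; alternatively, use a relative adjunction inequality in the spirit of \cite{HR23}. You also need to close up $F$ correctly. Note that $\partial F$ has slope $\lambda_r$, not the framing slope, so it cannot simply be capped by parallel copies of the core. These two ingredients are where $\chi(F)$, $q$ and the threshold $g-1$ have to come from.
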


\begin{rem}
Since $g_{Y\times I}(K)=g_{(-Y)\times I}(-K)$, we obtain the following bound on the rational slice genus:
\[
\max\{\nu^+(Y,K),\,\nu^+(-Y,-K)\}\leq g_{Y\times I}(K).
\]
\end{rem}

\subsection{Symmetries in knot Floer homology and the middle relative $\rm Spin^c$ structure}

In this subsection, we record several key facts in knot Floer homology that will be used frequently in the subsequent sections.

Let $K$ be a knot in a rational homology $3$-sphere $Y$. Reversing the orientations of both $Y$ and $K$ yields the reverse of the mirror, denoted $-K\subset -Y$. Given any ${\rm Spin^c}$ structure $\mathfrak{s}\in {\rm Spin^c}(Y)\cong {\rm Spin^c}(-Y)$, there is a filtered chain homotopy equivalence
\begin{equation}\label{mrdual}
CFK^{\infty}(-Y,-K,J\mathfrak{s}) \simeq CFK^{\infty}(Y,K,\mathfrak{s})^{\ast},
\end{equation}
where $CFK^{\infty}(Y,K,\mathfrak{s})^{\ast}$ denotes the dual complex
\[
{\rm Hom}_{\mathbb{F}[U,U^{-1}]}\!\left(CFK^{\infty}(Y,K,\mathfrak{s}),\,\mathbb{F}[U,U^{-1}]\right),
\]
and $J$ is the conjugation action on ${\rm Spin^c}(Y)$. See~\cite[Section~3.5]{OSk}.

In~\cite[Section~2.2]{WY}, the \textit{middle relative ${\rm Spin^c}$ structure} was introduced for each $\mathfrak{s}\in {\rm Spin^c}(Y)$, defined as the unique relative ${\rm Spin^c}$ structure $\xi^0_{\mathfrak{s}}\in \underline{{\rm Spin^c}}(Y,K,\mathfrak{s})$ such that
\[
V_{\xi^0_{\mathfrak{s}}}(Y,K)=H_{\xi^0_{\mathfrak{s}}}(Y,K),
\]
where $H$ is a family of invariants defined similarly to $V$; see~\cite[Section~5.1]{WY} for the precise definition. Several useful properties of the middle relative ${\rm Spin^c}$ structure were also established there.

\begin{lem}[{\cite[Proposition~2.6]{WY}}]\label{Amdrspc}
Suppose $K$ is a knot in a rational homology $3$-sphere $Y$. Then, for any $\rm Spin^c$ structure $\mathfrak{s} \in \rm Spin^c(Y)$, the Alexander grading of the middle relative $\rm Spin^c$ structure $\xi^0_{\mathfrak{s}}$ associated to $\mathfrak{s}$ is given by
\[
A_{Y,K}(\xi^0_{\mathfrak{s}})
=
\frac{1}{2}d(Y,\mathfrak{s})
-
\frac{1}{2}d(Y,\mathfrak{s}+PD[K]).
\]
\end{lem}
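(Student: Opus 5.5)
The plan is to compare $V_\xi$ and $H_\xi$ directly, for a fixed $\mathfrak{s}$ and arbitrary $\xi\in\underline{\rm Spin^c}(Y,K,\mathfrak{s})$. I will prove the identity
\[
V_\xi(Y,K)-H_\xi(Y,K)=\tfrac12 d(Y,\mathfrak{s})-\tfrac12 d(Y,\mathfrak{s}+PD[K])-A_{Y,K}(\xi).
\]
Setting $V_\xi=H_\xi$ in this identity picks out a unique $\xi$, which by definition is $\xi^0_{\mathfrak{s}}$, and it gives the stated formula for its Alexander grading. Uniqueness is automatic, because $A_{Y,K}$ is injective on $\underline{\rm Spin^c}(Y,K,\mathfrak{s})$.

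\textbf{Step 1: the $V$-side.} By the reformulated definition in the $V$-invariants subsection,
\[
2V_\xi=d(Y,\mathfrak{s})-m_\xi,
\]
where $m_\xi$ is the maximal Maslov grading of a non-torsion homogeneous class in $H_*(A^-_\xi(K))$.

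\textbf{Step 2: the $H$-side.} Following \cite[Section~5.1]{WY}, $H_\xi$ is defined through the inclusion $h^-_\xi\colon A^-_\xi(K)\to C_{\mathfrak{s}}\{j\le A_{Y,K}(\xi)\}$.

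First I identify the target with $CF^-(Y,\mathfrak{s}+PD[K])$. I use the basepoint-swapping argument: exchanging the roles of $w$ and $z$ turns $C_{\mathfrak{s}}\{j\le \cdot\}$ into a model for $CF^-$ in the ${\rm Spin^c}$ structure $G_{Y,K}(\xi+PD[\mu])$-type shift. That structure is $\mathfrak{s}+PD[K]$.

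Next I track the Maslov grading through the translation $[x,i,j]\mapsto[x,i-c,j-c]$, which is multiplication by $U^{c}$ and so lowers the grading by $2c$. This translation aligns the cutoff $j\le A_{Y,K}(\xi)$ with the normalization of the $w\leftrightarrow z$ identification. With that alignment, the top non-torsion grading of the target is
\[
d(Y,\mathfrak{s}+PD[K])+2A_{Y,K}(\xi).
\]
Hence
\[
2H_\xi=d(Y,\mathfrak{s}+PD[K])+2A_{Y,K}(\xi)-m_\xi,
\]
with the same $m_\xi$ as in Step 1, since both maps have the same source. Subtracting this from Step 1 gives the identity above.

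\textbf{Sanity check on signs.} Increasing $A_{Y,K}(\xi)$ enlarges $A^-_\xi$. That can only decrease $V_\xi$ and increase $H_\xi$, so $V_\xi-H_\xi$ must be decreasing in $A_{Y,K}(\xi)$, which matches the sign in the identity. For $Y=S^3$ the identity reduces to the familiar relation $H_k=V_{-k}$ together with $V_k-H_k=-k$.

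\textbf{Main obstacle.} The hard part is Step 2, specifically pinning down the absolute grading shift when the rational Alexander grading is involved. The $j$-values in $C_{\mathfrak{s}}$ lie in a coset $A_{Y,K}(\xi)+\mathbb{Z}$ rather than in $\mathbb{Z}$, so one must check carefully that the $w\leftrightarrow z$ swap identifies $C_{\mathfrak{s}}\{j\le A_{Y,K}(\xi)\}$ with $CF^-(Y,\mathfrak{s}+PD[K])$ with shift exactly $2A_{Y,K}(\xi)$, and not off by a constant. To fix the normalization I would first invoke the symmetry of $A_{Y,K}$ under conjugation together with the duality \eqref{mrdual}. As a fallback, I would calibrate the constant on the unknot in $Y$ (or on a Floer simple knot), where both $d$-invariants and the Alexander gradings are explicit; since the constant is independent of $\xi$, this calibration suffices.
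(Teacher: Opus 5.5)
Your Steps~1 and~2 are correct as bookkeeping, but they only reduce the lemma to an equivalent statement, which you then assert rather than prove. Fix $\mathfrak{s}$ and let $T_\xi$ be the top non-torsion grading of $H_*(C_{\mathfrak{s}}\{j\le A_{Y,K}(\xi)\})$, with the grading inherited from $CFK^{\infty}(Y,K,\mathfrak{s})$. Then $V_\xi-H_\xi=\frac12\bigl(d(Y,\mathfrak{s})-T_\xi\bigr)$, and $\tau_{\mathfrak{s}}:=T_\xi-2A_{Y,K}(\xi)$ does not depend on $\xi$. So $V_\xi=H_\xi$ exactly when $2A_{Y,K}(\xi)=d(Y,\mathfrak{s})-\tau_{\mathfrak{s}}$, and the lemma is \emph{equivalent} to $\tau_{\mathfrak{s}}=d(Y,\mathfrak{s}+PD[K])$. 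That is precisely your Step~2 claim that the target's top grading is $d(Y,\mathfrak{s}+PD[K])+2A_{Y,K}(\xi)$. Unwinding the $w\leftrightarrow z$ swap, the claim says ${\rm gr}_w(x)-{\rm gr}_z(x)=2A_{Y,K}(\underline{\mathfrak{s}}_{w,z}(x))$ for every generator with $\mathfrak{s}_w(x)=\mathfrak{s}$, where ${\rm gr}_z$ is the absolute grading in $CF(Y,\mathfrak{s}+PD[K])$ computed with basepoint $z$. Comparing domains only shows that the defect $c(Y,K,\mathfrak{s}):={\rm gr}_w(x)-{\rm gr}_z(x)-2A_{Y,K}(\underline{\mathfrak{s}}_{w,z}(x))$ is independent of $x$; in fact $\tau_{\mathfrak{s}}=d(Y,\mathfrak{s}+PD[K])+c(Y,K,\mathfrak{s})$. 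The vanishing of this constant is the entire content of the lemma.

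Neither of your proposed fixes pins the constant down. Conjugation symmetry (reading $(\Sigma,\alpha,\beta,w,z)$ as $(-\Sigma,\beta,\alpha,z,w)$) gives only $c(Y,K,J\mathfrak{s}-PD[K])=-c(Y,K,\mathfrak{s})$. The duality \eqref{mrdual} gives only $c(-Y,-K,J\mathfrak{s})=-c(Y,K,\mathfrak{s})$. Any defect that is odd under $\mathfrak{s}\mapsto J\mathfrak{s}-PD[K]$ satisfies both, so they force $c=0$ only at fixed points of that involution. That is why the argument works in $S^3$, which has a single ${\rm Spin^c}$ structure, but not in general. The calibration fallback is a non sequitur. $c(Y,K,\mathfrak{s})$ is independent of $\xi$, but it is an invariant of the triple $(Y,K,\mathfrak{s})$, so evaluating it on the unknot or on a Floer simple knot says nothing about a different knot $K$. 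Moreover, for a Floer simple knot, knowing its Alexander gradings independently already amounts to the lemma for that knot.

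What is missing is a genuine link between the rational Alexander grading and absolute Maslov gradings; that grading is defined by evaluating $c_1$ of relative ${\rm Spin^c}$ structures on a capped-off rational Seifert surface. The usual route goes through large surgery. Identify $A^-_\xi(K)$ with $CF^-$ of $Y_N(K)$, and $v^-_\xi$, $h^-_\xi$ with the $2$-handle cobordism maps $Y_N(K)\to Y$ in two ${\rm Spin^c}$ structures $\mathfrak{v}_\xi,\mathfrak{h}_\xi$. The difference of their grading shifts, $\frac14\bigl(c_1(\mathfrak{v}_\xi)^2-c_1(\mathfrak{h}_\xi)^2\bigr)$, is computed from the intersection form and the $c_1$-description of $A_{Y,K}$. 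That computation is what produces exactly the shift $2A_{Y,K}(\xi)$ you assumed. Once that identity is established or cited, your Steps~1--2 finish the argument. (The paper itself gives no proof here; it quotes \cite[Proposition~2.6]{WY}.)
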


\begin{lem}[{\cite[Corollary~2.8]{WY}}]\label{nugr}
Suppose $K$ is a knot in a rational homology $3$-sphere $Y$.   Then, for any $\rm Spin^c$ structure $\mathfrak{s} \in \rm Spin^c(Y)$, we have
\[
\nu^+_{\mathfrak{s}}(Y,K) \geq A_{Y,K}(\xi^0_{\mathfrak{s}}).
\]
\end{lem}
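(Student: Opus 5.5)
The plan is to compare $V_\xi$ with its companion invariant $H_\xi$ on the same relative ${\rm Spin^c}$ structure. The target is the identity
\[
H_{\xi}(Y,K)-V_{\xi}(Y,K)=A_{Y,K}(\xi)-A_{Y,K}(\xi^0_{\mathfrak{s}}) \qquad (\ast)
\]
for every $\xi\in\underline{\rm Spin^c}(Y,K,\mathfrak{s})$. This generalizes the classical relation $H_k=V_k+k$ for knots in $S^3$. Granting $(\ast)$, the lemma follows at once. Since $H_\xi\ge 0$, any $\xi$ with $A_{Y,K}(\xi)<A_{Y,K}(\xi^0_{\mathfrak{s}})$ satisfies
\[
V_\xi=H_\xi+A_{Y,K}(\xi^0_{\mathfrak{s}})-A_{Y,K}(\xi)>0 .
\]
Hence every $\xi$ with $V_\xi=0$ has $A_{Y,K}(\xi)\ge A_{Y,K}(\xi^0_{\mathfrak{s}})$, and taking the minimum over such $\xi$ in Definition~\ref{nudef} gives $\nu^+_{\mathfrak{s}}(Y,K)\ge A_{Y,K}(\xi^0_{\mathfrak{s}})$.

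To prove $(\ast)$, I would compute the maximal grading $g_\xi$ of a non-torsion homogeneous class in $H_\ast(A^-_\xi(K))$ in two ways.

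\emph{Via the vertical map.} The inclusion $v^-_\xi$ into $B^-_\xi=C_{\mathfrak{s}}\{i\le 0\}\cong CF^-(Y,\mathfrak{s})$ preserves Maslov grading. By the reformulated definition of $V$,
\[
g_\xi=d(Y,\mathfrak{s})-2V_\xi .
\]

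\emph{Via the horizontal map.} The map $h^-_\xi$ is the inclusion of $A^-_\xi$ into $C_{\mathfrak{s}}\{j\le A_{Y,K}(\xi)\}$. This is followed by multiplication by a power of $U$ to normalize the filtration to $\{j\le 0\}$, and then by the flip chain homotopy equivalence with $CF^-(Y,\mathfrak{s}+PD[K])$. By the definition of $H_\xi$ in \cite[Section~5.1]{WY}, this gives
\[
g_\xi=d(Y,\mathfrak{s}+PD[K])-2H_\xi+c(\xi),
\]
where $c(\xi)$ is the total grading shift of the normalization and flip.

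\emph{Comparing.} Subtracting the two expressions yields
\[
2(H_\xi-V_\xi)=d(Y,\mathfrak{s}+PD[K])-d(Y,\mathfrak{s})+c(\xi).
\]
The shift $c(\xi)$ should be affine in the Alexander grading with slope $2$, i.e. $c(\xi)=2A_{Y,K}(\xi)$ with the conventions of \cite{WY,HL}. Given this, Lemma~\ref{Amdrspc} turns the right-hand side into $2\bigl(A_{Y,K}(\xi)-A_{Y,K}(\xi^0_{\mathfrak{s}})\bigr)$, which is $(\ast)$.

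As a consistency check, setting $\xi=\xi^0_{\mathfrak{s}}$ recovers $V_{\xi^0_{\mathfrak{s}}}=H_{\xi^0_{\mathfrak{s}}}$, which is exactly the defining property of the middle relative ${\rm Spin^c}$ structure. An alternative route avoids pinning down the constant in $c(\xi)$ directly. One checks that $H_\xi-V_\xi$ increases by exactly $1$ when $A_{Y,K}(\xi)$ increases by $1$, which follows from the filtration inclusions. One then uses the normalization $H_{\xi^0}=V_{\xi^0}$ to fix the constant, so that Lemma~\ref{Amdrspc} is not even needed.

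The main obstacle is the grading bookkeeping for the horizontal map. The flip identification $C_{\mathfrak{s}}\{j\le 0\}\simeq CF^-(Y,\mathfrak{s}+PD[K])$ must be shown to shift Maslov grading so as to produce exactly $2A_{Y,K}(\xi)$, with no additional $\mathfrak{s}$-dependent constant. I would settle this by the alternative route above: check the unit-step behavior $H_{\xi+1}-V_{\xi+1}=H_\xi-V_\xi+1$ from the definitions, and normalize using the middle relative ${\rm Spin^c}$ structure.
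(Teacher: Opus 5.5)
Your argument is correct. The paper does not reprove this statement; it simply quotes \cite[Corollary~2.8]{WY}. Your derivation goes through the identity $H_\xi(Y,K)-V_\xi(Y,K)=A_{Y,K}(\xi)-A_{Y,K}(\xi^0_{\mathfrak{s}})$, the rational analogue of $H_k=V_k+k$, which is consistent with Lemma~\ref{Amdrspc}. This is the natural route, and your final deduction from $H_\xi\ge 0$ is exactly right.

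The one step to tighten is the justification of the exact unit step in your second route. The inclusions $A^-_\xi\subset A^-_{\xi+PD[\mu]}$ and $U\cdot A^-_{\xi+PD[\mu]}\subset A^-_\xi$ by themselves only give $0\le V_\xi-V_{\xi+PD[\mu]}\le 1$. They do not pin down the change in $H_\xi-V_\xi$.

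Exactness comes from the $U$-action. First, $V_\xi$ and $H_\xi$ are both measured from the same top grading $g_\xi$ of $H_\ast(A^-_\xi)$. Hence $2(H_\xi-V_\xi)$ equals the top tower grading of $H_\ast(C_{\mathfrak{s}}\{j\le A_{Y,K}(\xi)\})$, taken in the Maslov grading inherited from $C_{\mathfrak{s}}$, minus $d(Y,\mathfrak{s})$. Second, passing from $\xi$ to $\xi+PD[\mu]$ leaves the vertical target $C_{\mathfrak{s}}\{i\le 0\}$ unchanged. Meanwhile $U^{-1}$ is a chain isomorphism $C_{\mathfrak{s}}\{j\le a\}\to C_{\mathfrak{s}}\{j\le a+1\}$ raising the grading by exactly $2$, so $H_\xi-V_\xi$ increases by exactly $1$.

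With that line added, normalizing at $\xi^0_{\mathfrak{s}}$ via $H_{\xi^0_{\mathfrak{s}}}=V_{\xi^0_{\mathfrak{s}}}$ gives $(\ast)$ rigorously, without computing the shift $c(\xi)$.
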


\noindent For simplicity, when $Y$ and $K$ are understood, we write
\[
r_{\mathfrak{s}} = A_{Y,K}(\xi^0_{\mathfrak{s}}).
\]

A simple computation yields the following symmetry in the Alexander gradings of the middle relative ${\rm Spin^c}$ structures.

\begin{lem}\label{mrAsym}
Suppose $K$ is a knot in a rational homology $3$-sphere $Y$.  Then, for any pair of conjugate $\rm Spin^c$ structures $\mathfrak{s}, J\mathfrak{s} \in \rm Spin^c(Y) \cong \rm Spin^c(-Y)$, we have
\[
-A_{Y,K}(\xi^0_{\mathfrak{s}})
=
A_{-Y,-K}(\xi^0_{J\mathfrak{s}}).
\]
\end{lem}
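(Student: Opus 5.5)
Apply Lemma~\ref{Amdrspc} to both $(Y,K,\mathfrak{s})$ and $(-Y,-K,J\mathfrak{s})$ and compare the two formulas. This gives
\[
A_{-Y,-K}(\xi^0_{J\mathfrak{s}})=\tfrac12 d(-Y,J\mathfrak{s})-\tfrac12 d\bigl(-Y,J\mathfrak{s}+PD[-K]\bigr).
\]
The proof then reduces to rewriting each correction term on $-Y$ in terms of correction terms on $Y$.

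We use two standard facts about correction terms. First, $d(-Y,\mathfrak{t})=-d(Y,\mathfrak{t})$ under the identification ${\rm Spin^c}(Y)\cong{\rm Spin^c}(-Y)$. Second, conjugation invariance gives $d(Y,J\mathfrak{t})=d(Y,\mathfrak{t})$. Hence $d(-Y,J\mathfrak{s})=-d(Y,\mathfrak{s})$.

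For the second term we keep track of the homology class. Orientation reversal of both $Y$ and $K$ means that, under the identification $H_1(-Y)=H_1(Y)$ used for the ${\rm Spin^c}$ action, the class $PD[-K]$ corresponds to $-PD[K]$. This sign bookkeeping is the only delicate point. Reversing $K$ negates $[K]$. Reversing $Y$ changes the Poincaré duality sign, but the identification ${\rm Spin^c}(Y)\cong{\rm Spin^c}(-Y)$ (as used in \eqref{mrdual}) already absorbs that sign, so the net effect is $J\mathfrak{s}-PD[K]$. We also use the identity
\[
J\mathfrak{s}-PD[K]=J\bigl(\mathfrak{s}+PD[K]\bigr),
\]
which holds since $J(\mathfrak{t}+a)=J\mathfrak{t}-a$. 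Therefore
\[
d\bigl(-Y,J\mathfrak{s}+PD[-K]\bigr)=-d\bigl(Y,J(\mathfrak{s}+PD[K])\bigr)=-d\bigl(Y,\mathfrak{s}+PD[K]\bigr).
\]

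Substituting both identities into the formula above gives
\[
A_{-Y,-K}(\xi^0_{J\mathfrak{s}})=-\tfrac12 d(Y,\mathfrak{s})+\tfrac12 d\bigl(Y,\mathfrak{s}+PD[K]\bigr)=-A_{Y,K}(\xi^0_{\mathfrak{s}}),
\]
as claimed.

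As a cross-check on the sign convention, one can also argue directly. The duality \eqref{mrdual} negates the filtration levels $(i,j)$, and hence negates Alexander gradings. It sends the $H$ and $V$ invariants of one knot to those of the other, so it carries the middle relative ${\rm Spin^c}$ structure for $\mathfrak{s}$ to the one for $J\mathfrak{s}$. Both arguments give the same result.
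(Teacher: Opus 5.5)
Your argument is correct and is the same computation as the paper's: apply Lemma~\ref{Amdrspc} to $(-Y,-K,J\mathfrak{s})$, then use $d(-Y,\cdot)=-d(Y,\cdot)$ and conjugation invariance of $d$. You also make explicit the identification $J\mathfrak{s}+PD[-K]=J\mathfrak{s}-PD[K]=J(\mathfrak{s}+PD[K])$, which the paper uses only implicitly; your closing duality ``cross-check'' is not needed, and its claim that duality exchanges the $V$- and $H$-invariants of $K$ and $-K$ is loose (e.g.\ $V_0(T_{2,3})=1$ but $V_0(-T_{2,3})=H_0(-T_{2,3})=0$), so it should not be relied on.
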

\begin{proof}
By Lemma~\ref{Amdrspc}, we have
\begin{align*}
A_{-Y,-K}(\xi^0_{J\mathfrak{s}})
&=\frac{1}{2}d(-Y,J\mathfrak{s})-\frac{1}{2}d(-Y,J\mathfrak{s}+PD[-K])\\
&=-\frac{1}{2}d(Y,\mathfrak{s})+\frac{1}{2}d(Y,\mathfrak{s}+PD[K])\\
&=-A_{Y,K}(\xi^0_{\mathfrak{s}}),
\end{align*}
where we used the symmetries $d(-Y,\mathfrak{s})=-d(Y,\mathfrak{s})$ and $d(Y,J\mathfrak{s})=d(Y,\mathfrak{s})$.
\end{proof}

\section{Subadditivity and additivity of $\nu^+$ and $V$-invariants}
\label{(Sub)additivities}

\subsection{Subadditivity of $\nu^+$ and $V$-invariants}

The $\nu^+$ and $V$-invariants for knots in rational homology $3$-spheres satisfy subadditivity properties, analogous to the case of knots in $S^3$.

\begin{prop}[Subadditivity of $V$]\label{Vsubadditivity}
Suppose $K_1$ and $K_2$ are knots in rational homology $3$-spheres $Y_1$ and $Y_2$, respectively. Then, for any relative $\rm Spin^c$ structures
\[
\xi_1 \in \underline{\rm Spin^c}(Y_1,K_1)
\qquad\text{ and }\qquad
\xi_2 \in \underline{\rm Spin^c}(Y_2,K_2),
\]
we have
\[
V_{\xi_1\#\xi_2}(Y_1\# Y_2,\,K_1\# K_2)
\leq
V_{\xi_1}(Y_1,K_1)+V_{\xi_2}(Y_2,K_2).
\]
\end{prop}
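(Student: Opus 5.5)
We follow the standard argument for knots in $S^3$: use the Künneth formula for connected sums together with the tensor product of the two inclusion maps. By the connected sum formula in knot Floer homology (Ozsváth--Szabó; see also~\cite[Section~2.4]{HL}), for $\mathfrak{s}_i=G_{Y_i,K_i}(\xi_i)$ there is a filtered chain homotopy equivalence
\[
CFK^{\infty}(Y_1\#Y_2,K_1\#K_2,\mathfrak{s}_1\#\mathfrak{s}_2)\simeq C_{\mathfrak{s}_1}\otimes_{\mathbb{F}[U,U^{-1}]} C_{\mathfrak{s}_2}.
\]
Under this equivalence $[x_1,i_1,j_1]\otimes[x_2,i_2,j_2]$ is sent to a generator with filtration levels $(i_1+i_2,\,j_1+j_2)$, and the Alexander gradings add: $A_{Y_1\#Y_2,K_1\#K_2}(\xi_1\#\xi_2)=A_{Y_1,K_1}(\xi_1)+A_{Y_2,K_2}(\xi_2)$. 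Here $\xi_1\#\xi_2$ denotes the relative ${\rm Spin^c}$ structure so identified.

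With this identification, the tensor product $A^-_{\xi_1}(K_1)\otimes A^-_{\xi_2}(K_2)$ lands in $A^-_{\xi_1\#\xi_2}(K_1\#K_2)$. Indeed, if $\max\{i_1,j_1-A(\xi_1)\}\le 0$ and $\max\{i_2,j_2-A(\xi_2)\}\le 0$, then $i_1+i_2\le 0$ and $(j_1+j_2)-(A(\xi_1)+A(\xi_2))\le 0$. This subcomplex inclusion is compatible with $v^-_{\xi_1}\otimes v^-_{\xi_2}$ and the inclusion $B^-_{\xi_1}\otimes B^-_{\xi_2}\hookrightarrow B^-_{\xi_1\#\xi_2}$. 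The product $B^-_{\xi_1}\otimes_{\mathbb{F}[U]}B^-_{\xi_2}$ is $CF^-(Y_1,\mathfrak{s}_1)\otimes CF^-(Y_2,\mathfrak{s}_2)$, which is quasi-isomorphic to $CF^-(Y_1\#Y_2)$ up to the usual degree shift. Rather than track this shift, we would argue with gradings directly, using the reformulated definition of $V$.

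Choose homogeneous non-torsion classes $a_i\in H_*(A^-_{\xi_i}(K_i))$ of maximal grading, so that ${\rm gr}(a_i)=d(Y_i,\mathfrak{s}_i)-2V_{\xi_i}$. Their product $a_1\otimes a_2$ gives a class in $H_*(A^-_{\xi_1}\otimes A^-_{\xi_2})$, and we push it into $H_*(A^-_{\xi_1\#\xi_2}(K_1\#K_2))$. Its image in $H_*(B^-_{\xi_1\#\xi_2})\cong HF^-(Y_1\#Y_2)$ equals the image of $v_*(a_1)\otimes v_*(a_2)$. Each $v_*(a_i)$ is non-torsion, so by the Künneth theorem over the PID $\mathbb{F}[U]$ their tensor product is non-torsion in $HF^-(Y_1\#Y_2,\mathfrak{s}_1\#\mathfrak{s}_2)$. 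Hence the image of $a_1\otimes a_2$ in $H_*(A^-_{\xi_1\#\xi_2})$ is non-torsion. With the grading convention for $CF^-$, under which $d$ is additive, we get
\[
{\rm gr}(a_1\otimes a_2)={\rm gr}(a_1)+{\rm gr}(a_2)=d(Y_1\#Y_2,\mathfrak{s}_1\#\mathfrak{s}_2)-2(V_{\xi_1}+V_{\xi_2}).
\]
The maximal grading of a non-torsion class in $H_*(A^-_{\xi_1\#\xi_2})$ is therefore at least this value, which gives the desired inequality.

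The main obstacle is the bookkeeping in the first step. We must check that the Künneth equivalence respects the $(i,j)$-filtrations with the rational Alexander shift, so that $A_{Y_1\#Y_2,K_1\#K_2}$ is additive on relative ${\rm Spin^c}$ structures in this setting. We must also confirm the Maslov grading normalization, so that $d(Y_1\#Y_2)=d(Y_1)+d(Y_2)$ and gradings add under $\otimes$. We would cite the connected sum theorem for knots in rational homology spheres (as in~\cite{OSk},~\cite{HL}) rather than reprove it.
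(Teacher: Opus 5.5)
Your proposal is correct and is essentially the paper's argument. In both, you tensor maximal-grading non-torsion cycles, use the Künneth formula and additivity of Alexander gradings to place the product in $A^-_{\xi_1\#\xi_2}(K_1\#K_2)$ with non-torsion image in $B^-_{\xi_1\#\xi_2}(K_1\#K_2)$, and conclude from additivity of Maslov gradings and of $d$. Your explicit filtration check ($i_1+i_2\le 0$, $j_1+j_2\le A(\xi_1)+A(\xi_2)$) and your appeal to the algebraic Künneth theorem over $\mathbb{F}[U]$ for non-torsion-ness spell out two steps the paper states without elaboration.
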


\begin{proof}
For $i=1,2$, set
\[
n_i=\max\bigl\{{\rm gr}(x)\colon x \in H_{\ast}(A^-_{\xi_i}(K_i)) \text{ is homogeneous and non-torsion}\bigr\}.
\]
Choose a cycle $a_i\in A^-_{\xi_i}(K_i)$ such that $[a_i]\in H_{\ast}(A^-_{\xi_i}(K_i))$ is homogeneous and non-torsion, and
\[
{\rm gr}(a_i)=n_i.
\]
Then $v^-_{\xi_i}(a_i)$ is a cycle in $B^-_{\xi_i}(K_i)$, and
\[
[v^-_{\xi_i}(a_i)]\in H_{\ast}(B^-_{\xi_i}(K_i))
\]
is homogeneous and non-torsion.

Since $B^-_{\xi}(K)\cong CF^-(Y,G_{Y,K}(\xi))$, the K\"unneth formula~\cite[Theorem~6.2]{OSpa} gives
\[
CF^-(Y_1\# Y_2,\mathfrak{s}_1\# \mathfrak{s}_2)\simeq CF^-(Y_1,\mathfrak{s}_1)\otimes CF^-(Y_2,\mathfrak{s}_2),
\]
and hence
\[
B^-_{\xi_1\#\xi_2}(K_1\# K_2)\simeq B^-_{\xi_1}(K_1)\otimes B^-_{\xi_2}(K_2).
\]
Therefore,
\[
v^-_{\xi_1}(a_1)\otimes v^-_{\xi_2}(a_2)\in B^-_{\xi_1\#\xi_2}(K_1\#K_2)
\]
is a cycle whose homology class is homogeneous and non-torsion.

By~\cite[Lemma~3.8]{Rao} and~\cite[Theorem~6.2, Corollary~6.3]{OSpa}, we have
\begin{align*}
A_{Y_1\#Y_2,K_1\#K_2}\bigl(v^-_{\xi_1}(a_1)\otimes v^-_{\xi_2}(a_2)\bigr)
&=A_{Y_1,K_1}\bigl(v^-_{\xi_1}(a_1)\bigr)+A_{Y_2,K_2}\bigl(v^-_{\xi_2}(a_2)\bigr)\\
&\leq A_{Y_1,K_1}(\xi_1)+A_{Y_2,K_2}(\xi_2)\\
&=A_{Y_1\#Y_2,K_1\#K_2}(\xi_1\#\xi_2),
\end{align*}
and
\[
{\rm gr}\bigl(v^-_{\xi_1}(a_1)\otimes v^-_{\xi_2}(a_2)\bigr)
={\rm gr}\bigl(v^-_{\xi_1}(a_1)\bigr)+{\rm gr}\bigl(v^-_{\xi_2}(a_2)\bigr)
=n_1+n_2.
\]
Thus
\[
v^-_{\xi_1}(a_1)\otimes v^-_{\xi_2}(a_2)\in A^-_{\xi_1\#\xi_2}(K_1\#K_2).
\]
Moreover, it is a cycle, and its homology class
\[
\bigl[v^-_{\xi_1}(a_1)\otimes v^-_{\xi_2}(a_2)\bigr]\in H_{\ast}\bigl(A^-_{\xi_1\#\xi_2}(K_1\#K_2)\bigr)
\]
is homogeneous and non-torsion. Let
\[
n_3=\max\bigl\{{\rm gr}(x)\colon x\in H_{\ast}(A^-_{\xi_1\#\xi_2}(K_1\#K_2)) \text{ is homogeneous and non-torsion}\bigr\}.
\]
Then $n_3\geq n_1+n_2$, and therefore
\begin{align*}
V_{\xi_1\#\xi_2}(Y_1\#Y_2,K_1\#K_2)
&=\frac{1}{2}\bigl(d(Y_1\#Y_2,\mathfrak{s}_1\#\mathfrak{s}_2)-n_3\bigr)\\
&\leq \frac{1}{2}\bigl(d(Y_1,\mathfrak{s}_1)-n_1\bigr)+\frac{1}{2}\bigl(d(Y_2,\mathfrak{s}_2)-n_2\bigr)\\
&=V_{\xi_1}(Y_1,K_1)+V_{\xi_2}(Y_2,K_2),
\end{align*}
where $\mathfrak{s}_i=G_{Y_i,K_i}(\xi_i)$ for $i=1,2$, and we used the additivity of the $d$-invariant under connected sum~\cite[Theorem~4.3]{ozsvath2003absolutely}.
\end{proof}

\begin{thm}[Subadditivity of $\nu^+$]
\label{thm:nuSubadditivity}
Suppose $K_1$ and $K_2$ are knots in rational homology $3$-spheres $Y_1$ and $Y_2$, respectively. Then, for any $\rm Spin^c$ structures $\mathfrak{s}_1\in \rm Spin^c(Y_1)$ and $\mathfrak{s}_2\in \rm Spin^c(Y_2)$, we have
\[
\nu^+_{\mathfrak{s}_1\#\mathfrak{s}_2}(Y_1\#Y_2,\,K_1\#K_2)
\leq
\nu^+_{\mathfrak{s}_1}(Y_1,K_1)+\nu^+_{\mathfrak{s}_2}(Y_2,K_2).
\]
\end{thm}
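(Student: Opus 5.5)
This is a direct consequence of Proposition~\ref{Vsubadditivity} together with the additivity of the Alexander grading under connected sum. The plan has three steps.

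First, pick minimizers. By Definition~\ref{nudef}, choose $\xi_1\in\underline{\rm Spin^c}(Y_1,K_1,\mathfrak{s}_1)$ with $V_{\xi_1}(Y_1,K_1)=0$ and $A_{Y_1,K_1}(\xi_1)=\nu^+_{\mathfrak{s}_1}(Y_1,K_1)$. Choose $\xi_2$ for $(Y_2,K_2,\mathfrak{s}_2)$ in the same way. These minima exist because the set of Alexander gradings in a fixed $\mathfrak{s}$ is a coset of $\mathbb{Z}$ in $\mathbb{Q}$. It is also nonempty on the relevant side: for $A_{Y,K}(\xi)$ large, $A^-_\xi(K)$ agrees with $B^-_\xi(K)$ and $V_\xi=0$.

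Second, form the connected sum structure $\xi_1\#\xi_2\in\underline{\rm Spin^c}(Y_1\#Y_2,K_1\#K_2)$. I will check two properties, using the same references already invoked in the proof of Proposition~\ref{Vsubadditivity} (\cite[Lemma~3.8]{Rao} and \cite[Theorem~6.2]{OSpa}):
\[
G_{Y_1\#Y_2,K_1\#K_2}(\xi_1\#\xi_2)=\mathfrak{s}_1\#\mathfrak{s}_2
\quad\text{and}\quad
A_{Y_1\#Y_2,K_1\#K_2}(\xi_1\#\xi_2)=A_{Y_1,K_1}(\xi_1)+A_{Y_2,K_2}(\xi_2).
\]
The first says the underlying ${\rm Spin^c}$ structure is $\mathfrak{s}_1\#\mathfrak{s}_2$. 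The second is the additivity of the Alexander grading.

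Third, apply Proposition~\ref{Vsubadditivity}. It gives
\[
0\le V_{\xi_1\#\xi_2}(Y_1\#Y_2,K_1\#K_2)\le V_{\xi_1}(Y_1,K_1)+V_{\xi_2}(Y_2,K_2)=0.
\]
The lower bound $0$ holds because $V$ is a nonnegative integer. Hence $\xi_1\#\xi_2$ is admissible in the minimum defining $\nu^+_{\mathfrak{s}_1\#\mathfrak{s}_2}(Y_1\#Y_2,K_1\#K_2)$. Therefore
\[
\nu^+_{\mathfrak{s}_1\#\mathfrak{s}_2}(Y_1\#Y_2,K_1\#K_2)\le A_{Y_1\#Y_2,K_1\#K_2}(\xi_1\#\xi_2)=\nu^+_{\mathfrak{s}_1}(Y_1,K_1)+\nu^+_{\mathfrak{s}_2}(Y_2,K_2).
\]

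The only point needing care is the second step. One must confirm that the connected sum of relative ${\rm Spin^c}$ structures is well defined, lies over $\mathfrak{s}_1\#\mathfrak{s}_2$, and has additive Alexander grading. This is the same input already used in the proof of Proposition~\ref{Vsubadditivity}, so I expect no genuine obstacle. Taking maxima over $\mathfrak{s}_1,\mathfrak{s}_2$ does not directly give a statement about $\nu^+$ itself, since not every ${\rm Spin^c}$ structure on $Y_1\#Y_2$ needs to be handled. However, every ${\rm Spin^c}$ structure on $Y_1\#Y_2$ is of the form $\mathfrak{s}_1\#\mathfrak{s}_2$, so one also obtains $\nu^+(Y_1\#Y_2,K_1\#K_2)\le\nu^+(Y_1,K_1)+\nu^+(Y_2,K_2)$ as a corollary.
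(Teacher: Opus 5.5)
Your proposal is correct and matches the paper's proof essentially line for line. The paper also picks $\xi_i$ realizing $\nu^+_{\mathfrak{s}_i}(Y_i,K_i)$, uses Proposition~\ref{Vsubadditivity} together with non-negativity of $V$ to get $V_{\xi_1\#\xi_2}(Y_1\#Y_2,K_1\#K_2)=0$, and concludes from additivity of the Alexander grading. One minor point: existence of the minimum in Definition~\ref{nudef} needs more than discreteness of the Alexander gradings. It also uses that $V_\xi>0$ once $A_{Y,K}(\xi)$ is sufficiently negative, but this is part of the well-definedness of $\nu^+_{\mathfrak{s}}$, which the paper likewise takes for granted.
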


\begin{proof}
For $i=1,2$, choose a relative $\rm Spin^c$ structure
\[
\xi_i\in \underline{\rm Spin^c}(Y_i,K_i,\mathfrak{s}_i)
\]
such that
\[
A_{Y_i,K_i}(\xi_i)=\nu^+_{\mathfrak{s}_i}(Y_i,K_i).
\]
Then $V_{\xi_i}(Y_i,K_i)=0$. By Proposition~\ref{Vsubadditivity} and the non-negativity of the $V$-invariants, it follows that
\[
V_{\xi_1\#\xi_2}(Y_1\#Y_2,K_1\#K_2)=0.
\]
Therefore,
\begin{align*}
\nu^+_{\mathfrak{s}_1\#\mathfrak{s}_2}(Y_1\#Y_2,K_1\#K_2)
&\leq A_{Y_1\#Y_2,K_1\#K_2}(\xi_1\#\xi_2)\\
&=A_{Y_1,K_1}(\xi_1)+A_{Y_2,K_2}(\xi_2)\\
&=\nu^+_{\mathfrak{s}_1}(Y_1,K_1)+\nu^+_{\mathfrak{s}_2}(Y_2,K_2),
\end{align*}
as desired.
\end{proof}

\subsection{Additivity of $\nu^+$-invariants}

We begin by introducing a simple class of full knot Floer complexes $CFK^{\infty}$ that behaves well under tensor products (equivalently, under connected sum of knots).

\begin{defn}\label{def:localtrivial}
Let $K\subset Y$ be a knot in a rational homology $3$-sphere. We say that $K$ is \textit{locally trivial} with respect to a ${\rm Spin^c}$ structure $\mathfrak{s}\in {\rm Spin^c}(Y)$ if there is a filtered chain homotopy equivalence
\begin{equation}\label{localtrivial}
CFK^{\infty}(Y,K,\mathfrak{s}) \simeq CFK^{\infty}(S^3,U)[r_{\mathfrak{s}}] \oplus A,
\end{equation}
where $U$ denotes the unknot in $S^3$, $[r_{\mathfrak{s}}]$ denotes an Alexander grading shift, and $A$ is an acyclic complex. If $K$ is locally trivial for every ${\rm Spin^c}$ structure in ${\rm Spin^c}(Y)$, then we say that $K$ is \textit{totally locally trivial}.
\end{defn}

\begin{rem}
Equation~\eqref{localtrivial} means that there exists a filtered basis for $CFK^{\infty}(Y,K,\mathfrak{s})$ containing an element $x$ that generates the homology $H_{\ast}(CFK^{\infty}(Y,K,\mathfrak{s}))$ and splits off as a direct summand of $CFK^{\infty}(Y,K,\mathfrak{s})$. We call such an $x$ a \textit{distinguished generator}. Note that the distinguished generator is supported at the filtration level where $V=H=0$, and hence
\[
A_{Y,K}(x)=r_{\mathfrak{s}}=A_{Y,K}(\xi^0_{\mathfrak{s}}).
\]
\end{rem}

\begin{rem}\label{rem:localtrivial_nu}
If $K\subset Y$ is locally trivial with respect to a ${\rm Spin^c}$ structure $\mathfrak{s}$, then
\[
\nu^+_{\mathfrak{s}}(Y,K)=r_{\mathfrak{s}}.
\]
\end{rem}

The following proposition gives a necessary and sufficient condition for $\nu^+_{\mathfrak{s}}$ to be additive under connected sum with an arbitrary knot.

\begin{prop}\label{nuadditivity}
Suppose $K_1\subset Y_1$ is a knot in a rational homology $3$-sphere and $\mathfrak{s}_1\in \rm Spin^c(Y_1)$. Then the equality
\[
\nu^+_{\mathfrak{s}_1\#\mathfrak{s}_2}(Y_1\#Y_2,\,K_1\#K_2)
=
\nu^+_{\mathfrak{s}_1}(Y_1,K_1)+\nu^+_{\mathfrak{s}_2}(Y_2,K_2)
\]
holds for every knot $K_2\subset Y_2$ in a rational homology $3$-sphere and every $\rm Spin^c$ structure $\mathfrak{s}_2\in \rm Spin^c(Y_2)$ if and only if $K_1$ is locally trivial with respect to $\mathfrak{s}_1$.
\end{prop}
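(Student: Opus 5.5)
We treat the two implications separately. For the ``if'' direction, assume $K_1$ is locally trivial with respect to $\mathfrak{s}_1$, so that
\[
CFK^{\infty}(Y_1,K_1,\mathfrak{s}_1)\simeq CFK^{\infty}(S^3,U)[r_{\mathfrak{s}_1}]\oplus A
\]
with $A$ acyclic. By the K\"unneth formula for connected sums (the same \cite[Theorem~6.2]{OSpa}, together with the additivity of Alexander gradings from \cite[Lemma~3.8]{Rao} used in Proposition~\ref{Vsubadditivity}), we get
\[
CFK^{\infty}(Y_1\#Y_2,K_1\#K_2,\mathfrak{s}_1\#\mathfrak{s}_2)\simeq CFK^{\infty}(Y_2,K_2,\mathfrak{s}_2)[r_{\mathfrak{s}_1}]\oplus \bigl(A\otimes CFK^{\infty}(Y_2,K_2,\mathfrak{s}_2)\bigr).
\]
The second summand is acyclic, since it is a tensor product of free complexes over $\mathbb{F}[U,U^{-1}]$ with one factor acyclic. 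Acyclic summands are invisible to the $V$-invariants: the quotient complexes $A^-_\xi$ and $B^-_\xi$ split accordingly, and the acyclic parts contribute only $U$-torsion in homology, so they do not affect the free parts or the map $v^-_{\xi,\ast}$ between them. The Maslov grading of the shifted summand is shifted by $d(Y_1,\mathfrak{s}_1)$, consistently with $d$-additivity. Hence
\[
V_{\xi_1\#\xi_2}(K_1\#K_2)=V_{\xi_2}(K_2),
\]
where $\xi_1$ is the relative structure of Alexander grading $r_{\mathfrak{s}_1}$ shifted appropriately. More precisely, the relative ${\rm Spin^c}$ structures of $K_1\#K_2$ over $\mathfrak{s}_1\#\mathfrak{s}_2$ are exactly the $\xi^0_{\mathfrak{s}_1}\#\xi_2$ up to the identification, with Alexander grading $r_{\mathfrak{s}_1}+A(\xi_2)$. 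Taking the minimum Alexander grading with $V=0$ then gives $\nu^+_{\mathfrak{s}_1\#\mathfrak{s}_2}=r_{\mathfrak{s}_1}+\nu^+_{\mathfrak{s}_2}$. By Remark~\ref{rem:localtrivial_nu} this equals $\nu^+_{\mathfrak{s}_1}+\nu^+_{\mathfrak{s}_2}$.

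For the ``only if'' direction, we specialise to $K_2=-K_1\subset -Y_1$ and $\mathfrak{s}_2=J\mathfrak{s}_1$. The connected sum $K_1\#-K_1\subset Y_1\#-Y_1$ is rationally slice; more precisely, it bounds the standard ribbon disk in the punctured $(Y_1\times I)$, so its $\nu^+$ should be as small as possible. Concretely, I would show
\[
\nu^+_{\mathfrak{s}_1\#J\mathfrak{s}_1}(K_1\#-K_1)=A(\xi^0_{\mathfrak{s}_1\#J\mathfrak{s}_1}).
\]
By Lemma~\ref{mrAsym} and additivity of $d$, this common value is $r_{\mathfrak{s}_1}-r_{\mathfrak{s}_1}=0$. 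The cleanest route is algebraic. By \eqref{mrdual}, the relevant complex is $C\otimes C^{\ast}\cong \mathrm{Hom}(C,C)$ with $C=CFK^\infty(Y_1,K_1,\mathfrak{s}_1)$, and the identity map is a cycle of Alexander grading $0$ lying in the $A^-$ subcomplex at grading $0$ with the correct Maslov grading, so $V=0$ there. The additivity hypothesis then forces
\[
\nu^+_{\mathfrak{s}_1}(K_1)+\nu^+_{J\mathfrak{s}_1}(-K_1)=0.
\]
Lemma~\ref{nugr} gives $\nu^+_{\mathfrak{s}_1}(K_1)\ge r_{\mathfrak{s}_1}$ and, via Lemma~\ref{mrAsym}, $\nu^+_{J\mathfrak{s}_1}(-K_1)\ge -r_{\mathfrak{s}_1}$. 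Hence both inequalities are equalities: $\nu^+_{\mathfrak{s}_1}(K_1)=r_{\mathfrak{s}_1}$, and the same holds for the mirror.

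The main obstacle is the final step: showing that $\nu^+_{\mathfrak{s}}(K)=r_{\mathfrak{s}}$ and $\nu^+_{J\mathfrak{s}}(-K)=-r_{\mathfrak{s}}$ together force the splitting \eqref{localtrivial}. I would imitate the $S^3$ argument that $\nu^+(K)=\nu^+(-K)=0$ (equivalently $V_0=0$ for $K$ and its mirror) implies that $CFK^\infty$ is $\nu^+$-equivalent to the unknot, as in Hom's and Kim--Park's work. The hypothesis $V_{\xi^0}(K)=0$ gives a cycle $x$ in $C\{\max(i,j-r)\le 0\}$ generating $H_*(C)$. Dually, $V_{\xi^0}(-K)=0$ gives a cocycle $x^\ast$ supported in $C\{\min(i,j-r)\ge 0\}$ pairing nontrivially with $x$. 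Filtration considerations force $x$ and $x^\ast$ to have a nonzero component at the single filtration level $(0,r)$. A change of filtered basis then splits off $\langle x\rangle\otimes\mathbb{F}[U,U^{-1}]$ as a direct summand, using $x^\ast$ to define the projection. The complement is acyclic because $x$ generates the total homology $H_*(C)\cong\mathbb{F}[U,U^{-1}]$. Making this filtered-splitting step rigorous over a rational Alexander shift is where most of the care lies.
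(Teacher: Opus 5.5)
Your proposal is correct and has the same architecture as the paper. The ``if'' direction goes through the K\"unneth splitting. The ``only if'' direction tests against $(-Y_1,-K_1,J\mathfrak{s}_1)$, uses Lemmas~\ref{nugr} and~\ref{mrAsym} to force $V_{\xi^0_{\mathfrak{s}_1}}(Y_1,K_1)=V_{\xi^0_{J\mathfrak{s}_1}}(-Y_1,-K_1)=0$, and then turns this into a splitting. You differ from the paper in two sub-steps, both legitimately.

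First, the paper gets $\nu^+_{\mathfrak{s}_1\#J\mathfrak{s}_1}(K_1\#-K_1)=0$ from the full local triviality of $C\otimes C^\ast$, citing \cite[Lemma~2.18]{Zemcsi} and \cite[Lemma~3.3]{HHci}, together with a $d$-invariant computation of the shift. You use only that $\mathrm{id}\in\mathrm{Hom}(C,C)\cong C\otimes C^\ast$ is a cycle with three properties: it sits at filtration level $(0,0)$, it has Maslov grading $0=d(Y_1\#-Y_1,\mathfrak{s}_1\#J\mathfrak{s}_1)$, and it is non-torsion because it induces the identity on $H_\ast(C)\cong\mathbb{F}[U,U^{-1}]$. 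This yields only $\nu^+\le 0$, but that is all the contradiction argument needs, since the reverse inequality comes from Lemma~\ref{nugr}. Your route is therefore more elementary.

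Second, where the paper cites \cite[Proposition~6.1]{WY} and \cite[Proposition~3.11]{Hom}, you reprove the splitting directly. Your sketch is sound, and it can be made rigorous as follows:
\begin{itemize}
\item Normalize $x$ to Maslov grading $d(Y_1,\mathfrak{s}_1)$ and $x^\ast$ to $-d(Y_1,\mathfrak{s}_1)$, so that $x^\ast(x)=1$.
\item Then $\pi(z)=x^\ast(z)\,x$ is an idempotent chain map, and it is filtered precisely because of your two support conditions.
\item Hence $C=\operatorname{im}\pi\oplus\ker\pi$ as filtered complexes.
\item $\operatorname{im}\pi\cong CFK^{\infty}(S^3,U)[r_{\mathfrak{s}_1}]$, because the constant-term pairing shows $x$ sits at level exactly $(0,r_{\mathfrak{s}_1})$.
\item $\ker\pi$ is acyclic by a rank count.
\end{itemize}
The rational shift causes no extra trouble, since all $j$-levels of $C$ lie in $r_{\mathfrak{s}_1}+\mathbb{Z}$.

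Two minor slips: $A^-_\xi$ and $B^-_\xi$ are subcomplexes, not quotient complexes. The K\"unneth formula for knot complexes is \cite[Theorem~5.1]{OSr}, not \cite[Theorem~6.2]{OSpa}.
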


\begin{proof}
Assume first that $K_1$ is locally trivial with respect to $\mathfrak{s}_1$. Then
\[
\nu^+_{\mathfrak{s}_1}(Y_1,K_1)=A_{Y_1,K_1}(\xi^0_{\mathfrak{s}_1})=r_{\mathfrak{s}_1}.
\]
By the K\"unneth formula~\cite[Theorem~5.1]{OSr} and~\cite[Theorem~3.7, Lemma~3.8]{Rao}, we have
\begin{align*}
CFK^{\infty}(Y_1\#Y_2,K_1\#K_2,\mathfrak{s}_1\#\mathfrak{s}_2)
&\simeq CFK^{\infty}(Y_1,K_1,\mathfrak{s}_1)\otimes CFK^{\infty}(Y_2,K_2,\mathfrak{s}_2)\\
&\simeq \bigl(CFK^{\infty}(S^3,U)[r_{\mathfrak{s}_1}]\oplus A\bigr)\otimes CFK^{\infty}(Y_2,K_2,\mathfrak{s}_2)\\
&\simeq CFK^{\infty}(Y_2,K_2,\mathfrak{s}_2)[r_{\mathfrak{s}_1}] \oplus A',
\end{align*}
where $A':=A\otimes CFK^{\infty}(Y_2,K_2,\mathfrak{s}_2)$ is acyclic. Since acyclic summands do not affect the $\nu^+$-invariant, we obtain
\[
\nu^+_{\mathfrak{s}_1\#\mathfrak{s}_2}(Y_1\#Y_2,K_1\#K_2)
=\nu^+_{\mathfrak{s}_2}(Y_2,K_2)+r_{\mathfrak{s}_1}
=\nu^+_{\mathfrak{s}_2}(Y_2,K_2)+\nu^+_{\mathfrak{s}_1}(Y_1,K_1).
\]

Conversely, assume that the above equality holds for all $(Y_2,K_2,\mathfrak{s}_2)$. Take
\[
(Y_2,K_2,\mathfrak{s}_2)=(-Y_1,-K_1,J\mathfrak{s}_1),
\]
obtained by reversing the orientations of $Y_1$ and $K_1$. By the K\"unneth formula and~\eqref{mrdual}, it follows that
\begin{align*}
CFK^{\infty}(Y_1\#-Y_1,K_1\#-K_1,\mathfrak{s}_1\#J\mathfrak{s}_1)
&\simeq CFK^{\infty}(Y_1,K_1,\mathfrak{s}_1)\otimes CFK^{\infty}(-Y_1,-K_1,J\mathfrak{s}_1)\\
&\simeq CFK^{\infty}(Y_1,K_1,\mathfrak{s}_1)\otimes CFK^{\infty}(Y_1,K_1,\mathfrak{s}_1)^{\ast}\\
&\simeq CFK^{\infty}(S^3,U)\oplus A'',
\end{align*}
for some acyclic complex $A''$. Here the last equivalence follows from~\cite[Lemma~2.18]{Zemcsi} and~\cite[Lemma~3.3]{HHci}. Moreover, there is no Alexander grading shift, since the distinguished generator can only occur in Alexander grading
\begin{align*}
A_{Y_1\#-Y_1,K_1\#-K_1}(\xi^0_{\mathfrak{s}_1\#J\mathfrak{s}_1})
&=\frac{1}{2}d(Y_1\#-Y_1,\mathfrak{s}_1\#J\mathfrak{s}_1)
-\frac{1}{2}d(Y_1\#-Y_1,\mathfrak{s}_1\#J\mathfrak{s}_1+PD[K_1\#-K_1])\\
&=\frac{1}{2}d(Y_1,\mathfrak{s}_1)-\frac{1}{2}d(Y_1,J\mathfrak{s}_1)-\frac{1}{2}d(Y_1,\mathfrak{s}_1+PD[K_1])+\frac{1}{2}d(Y_1,J\mathfrak{s}_1-PD[K_1])\\
&=0.
\end{align*}
Thus,
\[
\nu^+_{\mathfrak{s}_1\#J\mathfrak{s}_1}(Y_1\#-Y_1,K_1\#-K_1)=0.
\]
We claim that
\[
\nu^+_{\mathfrak{s}_1}(Y_1,K_1)=A_{Y_1,K_1}(\xi^0_{\mathfrak{s}_1})
\qquad\text{ and }\qquad
\nu^+_{J\mathfrak{s}_1}(-Y_1,-K_1)=A_{-Y_1,-K_1}(\xi^0_{J\mathfrak{s}_1}).
\]
Otherwise, Lemma~\ref{nugr} would imply
\[
\nu^+_{\mathfrak{s}_1}(Y_1,K_1) > A_{Y_1,K_1}(\xi^0_{\mathfrak{s}_1})
\qquad\text{ or }\qquad
\nu^+_{J\mathfrak{s}_1}(-Y_1,-K_1) > A_{-Y_1,-K_1}(\xi^0_{J\mathfrak{s}_1}).
\]
By Lemma~\ref{mrAsym}, this would imply
\begin{align*}
\nu^+_{\mathfrak{s}_1}(Y_1,K_1)+\nu^+_{J\mathfrak{s}_1}(-Y_1,-K_1)
&>
A_{Y_1,K_1}(\xi^0_{\mathfrak{s}_1})+A_{-Y_1,-K_1}(\xi^0_{J\mathfrak{s}_1})\\
&=0
=\nu^+_{\mathfrak{s}_1\#J\mathfrak{s}_1}(Y_1\#-Y_1,K_1\#-K_1),
\end{align*}
which contradicts the additivity assumption for $\nu^+$ applied to $K_1$ and $-K_1$.
This proves the claim. Consequently,
\[
V_{\xi^0_{\mathfrak{s}_1}}(Y_1,K_1)=0
\qquad\text{ and }\qquad
V_{\xi^0_{J\mathfrak{s}_1}}(-Y_1,-K_1)=0.
\]
By~\cite[Proposition~6.1]{WY} and \cite[Proposition~3.11]{Hom}, it follows that $K_1$ is locally trivial with respect to $\mathfrak{s}_1$.
\end{proof}

Our main theorem relies on the additivity of the $\nu^+$-invariant. Using the Heegaard Floer $\tau$-invariants~\cite{Rao,HR23}, one can prove an analogue of Theorem~\ref{KcUY1}. However, we emphasize that $\nu^+$ gives a lower bound on slice genus that is at least as strong as that given by $\tau$, and in general strictly stronger, as shown in~\cite[Theorem~1]{HW}. Thus, $\nu^+$-sharp knots form a larger family than $\tau$-sharp knots, which may be defined analogously.

\section{Additivity of the rational slice genus}
\label{Proof of main results}

We first establish the following inequality.

\begin{lem}\label{lem:subadditivity}
Let $K$ and $K'$ be knots in rational homology $3$-spheres $Y$ and $Y'$, respectively. Then
\begin{equation*}\label{rsgie}
g_{(Y\#Y') \times I}(K\#K') \leq g_{Y \times I}(K) + g_{Y' \times I}(K').
\end{equation*}
\end{lem}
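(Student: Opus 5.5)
The proof is a direct geometric construction. Take optimal Seifert framed rational slice surfaces $F\subset (Y\times I)\smallsetminus N^\circ(K)$ and $F'\subset (Y'\times I)\smallsetminus N^\circ(K')$ realizing $g_{Y\times I}(K)$ and $g_{Y'\times I}(K')$. Their boundaries have classes $k\lambda_r$ and $k'\lambda'_r$ in $H_1(\partial N(K))$ and $H_1(\partial N(K'))$. Each boundary therefore meets the meridian $p=|[\mu]\cdot k\lambda_r|=k$ (respectively $p'=k'$) times algebraically. Since $\lambda_r$ is primitive, we may isotope $\partial F$ to be $k$ parallel copies of a simple closed curve of slope $\lambda_r$, possibly after tubing or other standard modifications that do not increase $-\chi$; similarly for $F'$.

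Next I would pass to common multiplicities. Let $m=\operatorname{lcm}(k,k')$. Taking $m/k$ parallel copies of $F$ and $m/k'$ copies of $F'$ (pushed off using a normal direction, which exists since the surfaces are oriented) multiplies $-\chi$ and $|[\mu]\cdot[\partial\,\cdot\,]|$ by the same factor. The normalized quantity $-\chi/(2|\mu\cdot\partial|)$ is therefore unchanged. Connectivity is lost, so either allow disconnected surfaces in the computation or tube the copies together afterwards. The tubing cost must be controlled, and I handle it below by an averaging or limit argument.

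The key step is to perform the connected sum $Y\#Y'$ along balls meeting $K$ and $K'$ in trivial arcs. The rational longitude of $K\#K'$ should be computed, via Mayer–Vietoris on the exteriors, as the sum of the two longitudes in the band. The order $k''$ of $[K\#K']$ then divides $m$. Near the connected-sum sphere, $\partial F$ consists of $m$ parallel arcs crossing the band region, as does $\partial F'$. Gluing $F\times\{\text{pieces}\}$ to $F'$ along $m$ strips in $S^2\times I$, which amounts to a boundary connected sum along $m$ arcs, yields a surface $G$ with $\chi(G)=\chi(F^m)+\chi(F'^m)-m$. Its boundary has class $m\lambda_r''$, with meridian intersection $m$. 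Plugging in gives
\[
\frac{-\chi(G)}{2m}+\frac12=\Bigl(\frac{-\chi(F)}{2k}+\frac12\Bigr)+\Bigl(\frac{-\chi(F')}{2k'}+\frac12\Bigr),
\]
which is exactly the desired sum. The $-m$ term is precisely what converts the two $+\tfrac12$ shifts into one.

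I expect the main obstacle to be the arc-matching and orientation step: the $m$ boundary arcs of both surfaces must be arranged at the gluing locus so that the strips are embedded and orientations match. The result should also be connected, or else the definition must tolerate a disconnected surface, in which case a connected component achieving at most the average ratio works since the ratio is a mediant. To handle this, I would use the fact that parallel curves of the same slope on the torus can be matched in cyclic order, and that each resulting component's ratio is a weighted piece of the total, so choosing the best component gives the inequality.
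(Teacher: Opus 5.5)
Your strategy is the same as the paper's. The paper invokes the Ni--Vafaee band construction for rational Seifert surfaces (equalize meridian counts with parallel copies, then band across the connected-sum sphere) and observes that it works verbatim in $Y\times I$. However, your bookkeeping has a genuine error. The identity $|[\mu]\cdot k\lambda_r|=k$ is false: the meridian count of a Seifert framed surface is the order $p$ of $[K]$ in $H_1(Y;\mathbb{Z})$, and $k$ can be much smaller. For the core of a Heegaard solid torus in $L(p,q)$, the exterior is a solid torus, so $k=1$, while the meridian count is $p$. Hence with $m=\operatorname{lcm}(k,k')$ and $m/k$, $m/k'$ copies, the two sides present $mp/k$ and $mp'/k'$ arcs at the gluing sphere, and these need not agree. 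For the core of $L(5,1)$ summed with the unknot in $S^3$, $m=1$, but there are $5$ arcs against $1$. So the strip-gluing step fails as written, and the denominators $2k$, $2k'$ in your final identity are the wrong ones.

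The repair is to take $m=\operatorname{lcm}(p,p')$ with $m/p$ and $m/p'$ copies. Since $H_1(Y\#Y')\cong H_1(Y)\oplus H_1(Y')$ and $[K\#K']=([K],[K'])$, this $m$ is exactly the order of $[K\#K']$, not merely a multiple of it. That settles both issues you left open.

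\emph{Seifert framing.} Apply the same gluing to rational Seifert surfaces $S,S'$ with $\partial S=\partial F$ and $\partial S'=\partial F'$. This gives a surface in the $3$-dimensional exterior of $K\#K'$ with boundary $\partial G$ (the paper's remark $\partial F_0=\partial S_0$). So $[\partial G]$ is a multiple of $k''\lambda''_r$, and it equals $k''\lambda''_r$ (not $m\lambda''_r$) because its meridian count is $m$. Some such $3$-dimensional comparison is needed, because in $Y\times I$ the $\mu$-coefficient of the boundary is not constrained homologically.

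\emph{Connectivity.} Each component of $G$ has boundary crossing $\mu$ coherently. Its meridian count $a$ satisfies $a[K\#K']=0$ in $H_1$ of the complement, which is $H_1(Y\#Y')$. So $a$ is a positive multiple of $m$, and since the total is $m$, $G$ is connected. Thus no tubing, averaging, or mediant argument is needed. Those fallbacks would not work anyway: each tube lowers $\chi$ by $2$ with nothing to compensate, and a component selected by the mediant inequality is admissible only if its boundary class is exactly $k''\lambda''_r$, which that argument does not provide.

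\emph{Push-offs.} In dimension four, orientability alone does not give push-offs whose boundaries are parallel on $\partial N(K)$. You need the relative Euler number of the normal bundle of $F$, with respect to the torus framing, to vanish. This holds because capping $F$ with a rational Seifert surface yields a closed surface in $Y\times I$, and $H_2(Y\times I;\mathbb{Z})=0$.
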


\begin{proof}
In~\cite[Lemma~5.1]{NV}, the authors construct a rational Seifert surface $S_0$ for $K\#K'$ from rational Seifert surfaces $S$ for $K$ and $S'$ for $K'$.

The same construction extends to the $4$-dimensional setting. Suppose $F$ and $F'$ are Seifert framed rational slice surfaces for $K$ and $K'$, respectively. We construct a Seifert framed rational slice surface $F_0$ for $K\#K'$ by attaching bands along arcs near the boundary, exactly as in the construction of $S_0$. In particular, since this operation on $\partial F$ and $\partial F'$ is the same as the operation on $\partial S$ and $\partial S'$, we have $\partial F_0=\partial S_0$. Therefore, $F_0$ is Seifert framed, and its genus satisfies
\[
g(F_0)=g(F)+g(F').
\]
Taking $F$ and $F'$ to be genus-minimizing completes the proof.
\end{proof}

In order to prove Theorem~\ref{KcUY1}, we will also need the following facts about totally locally trivial knots.

\begin{lem}\label{tltnusym}
Suppose $K$ is a totally locally trivial knot in a rational homology $3$-sphere $Y$. Then $-K$ is also totally locally trivial in $-Y$, and
\[
\nu^+(Y,K)=\nu^+(-Y,-K).
\]
\end{lem}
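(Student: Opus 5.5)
The plan is to use the duality \eqref{mrdual} to transport the splitting \eqref{localtrivial} from $K$ to $-K$. Then we compute $\nu^+$ on both sides via Remark~\ref{rem:localtrivial_nu} and Lemma~\ref{mrAsym}, and check that the two maxima over ${\rm Spin^c}$ structures agree.

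\emph{Step 1: $-K$ is totally locally trivial.} Fix $\mathfrak{t}\in{\rm Spin^c}(-Y)$ and write $\mathfrak{t}=J\mathfrak{s}$ for a unique $\mathfrak{s}\in{\rm Spin^c}(Y)$. By \eqref{mrdual} and local triviality of $K$ at $\mathfrak{s}$,
\[
CFK^{\infty}(-Y,-K,J\mathfrak{s})\simeq \bigl(CFK^{\infty}(S^3,U)[r_{\mathfrak{s}}]\oplus A\bigr)^{\ast}\simeq CFK^{\infty}(S^3,U)^{\ast}[-r_{\mathfrak{s}}]\oplus A^{\ast}.
\]
Here we use that dualizing a filtered complex negates both filtrations. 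In particular, an Alexander shift by $r$ becomes a shift by $-r$. Two further facts are needed: $CFK^{\infty}(S^3,U)^{\ast}\cong CFK^{\infty}(S^3,U)$, since it is a single generator at $(0,0)$; and $A^{\ast}$ is acyclic, since the dual of a contractible free complex is contractible. The remaining point is that the shift $-r_{\mathfrak{s}}$ equals the quantity $r_{J\mathfrak{s}}$ computed for $(-Y,-K)$, namely $A_{-Y,-K}(\xi^0_{J\mathfrak{s}})$. This is exactly Lemma~\ref{mrAsym}. Alternatively, the distinguished generator must sit in the Alexander grading of the middle relative ${\rm Spin^c}$ structure, as in the remark following Definition~\ref{def:localtrivial}. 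Hence $-K$ is locally trivial with respect to every $\mathfrak{t}$.

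\emph{Step 2: equality of $\nu^+$.} By Remark~\ref{rem:localtrivial_nu} applied to both knots, and Lemma~\ref{mrAsym},
\[
\nu^+_{J\mathfrak{s}}(-Y,-K)=A_{-Y,-K}(\xi^0_{J\mathfrak{s}})=-r_{\mathfrak{s}}=-\tfrac12 d(Y,\mathfrak{s})+\tfrac12 d(Y,\mathfrak{s}+PD[K]).
\]
Taking the maximum over $\mathfrak{s}$ gives
\[
\nu^+(-Y,-K)=\max_{\mathfrak{s}}\bigl(-r_{\mathfrak{s}}\bigr)=-\min_{\mathfrak{s}} r_{\mathfrak{s}},\qquad \nu^+(Y,K)=\max_{\mathfrak{s}} r_{\mathfrak{s}}.
\]
So we must show $\max_{\mathfrak{s}} r_{\mathfrak{s}}=-\min_{\mathfrak{s}} r_{\mathfrak{s}}$. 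For this, reindex $\mathfrak{s}$ to $\mathfrak{s}'=J\mathfrak{s}-PD[K]$, which is an involution of ${\rm Spin^c}(Y)$. Using $d(Y,J\mathfrak{t})=d(Y,\mathfrak{t})$ we get
\[
r_{\mathfrak{s}'}=\tfrac12 d(Y,J\mathfrak{s}-PD[K])-\tfrac12 d(Y,J\mathfrak{s})=\tfrac12 d(Y,\mathfrak{s}+PD[K])-\tfrac12 d(Y,\mathfrak{s})=-r_{\mathfrak{s}}.
\]
Thus the multiset $\{r_{\mathfrak{s}}\}$ is symmetric under negation, and the two maxima coincide.

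\emph{Expected obstacle.} The main obstacle is bookkeeping rather than substance. First, Step~1 requires checking that the duality \eqref{mrdual} really converts an Alexander shift $[r]$ into $[-r]$ with the conventions of \cite{WY}; the relative ${\rm Spin^c}$ labels may pick up a $PD[K]$ offset. Second, the relabeling in Step~2 should use the involution $\mathfrak{s}\mapsto J\mathfrak{s}-PD[K]$ rather than plain $J$. If the conventions for the middle relative ${\rm Spin^c}$ structure of $-K$ involve $\mathfrak{s}+PD[-K]=\mathfrak{s}-PD[K]$, I would verify this identity directly from Lemma~\ref{Amdrspc}. Doing so avoids any dependence on the precise form of \eqref{mrdual}.
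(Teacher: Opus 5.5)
Your proposal is correct and follows essentially the same route as the paper. Both arguments transport local triviality to $-K$ via the duality \eqref{mrdual}, identify each $\nu^+_{\mathfrak{s}}$ with the middle Alexander grading, relate $K$ and $-K$ through Lemma~\ref{mrAsym}, and use the involution $\mathfrak{s}\mapsto J\mathfrak{s}-PD[K]$ to show that $\{r_{\mathfrak{s}}\}$ is symmetric about $0$. The differences are cosmetic: the paper applies this symmetry to $(-Y,-K)$ rather than $(Y,K)$, and your Step~1 spells out the dualization details that the paper leaves implicit.
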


\begin{proof}\pushQED{\qed}
Suppose $K \subset Y$ is a totally locally trivial knot in a rational homology $3$-sphere. Set
\[
\mathcal{A}_{Y,K}=\{A_{Y,K}(\xi^0_{\mathfrak{s}})\mid \mathfrak{s}\in \rm Spin^c(Y)\},
\]
and define
\[
A_{\max}(Y,K)=\max \mathcal{A}_{Y,K}, \qquad A_{\min}(Y,K)=\min \mathcal{A}_{Y,K}.
\]
Since $K$ is totally locally trivial, we have
\[
\nu^+(Y,K)=A_{\max}(Y,K).
\]

For any $\rm Spin^c$ structure $\mathfrak{s}\in \rm Spin^c(Y)$, consider the middle relative $\rm Spin^c$ structures $\xi^0_{\mathfrak{s}}$ and $\xi^0_{J\mathfrak{s}-PD[K]}$. By Lemma~\ref{Amdrspc},
\begin{align*}
A_{Y,K}(\xi^0_{J\mathfrak{s}-PD[K]})
&=\frac{1}{2}d(Y,J\mathfrak{s}-PD[K])-\frac{1}{2}d(Y,J\mathfrak{s}-PD[K]+PD[K])\\
&=\frac{1}{2}d(Y,\mathfrak{s}+PD[K])-\frac{1}{2}d(Y,\mathfrak{s})\\
&=-A_{Y,K}(\xi^0_{\mathfrak{s}}),
\end{align*}
where we used $d(Y,J\mathfrak{s})=d(Y,\mathfrak{s})$. It follows that $\mathcal{A}_{Y,K}$ is symmetric about $0$, and hence
\begin{equation}\label{rlspcsym}
A_{\max}(Y,K)=-A_{\min}(Y,K).
\end{equation}

By~\eqref{mrdual}, the fact that $K\subset Y$ is totally locally trivial implies that $-K\subset -Y$ is also totally locally trivial. Moreover, Lemma~\ref{mrAsym} gives
\[
\mathcal{A}_{Y,K}=-\mathcal{A}_{-Y,-K}
\qquad\text{and}\qquad
A_{\max}(Y,K)=-A_{\min}(-Y,-K).
\]
Using~\eqref{rlspcsym} for $(-Y,-K)$, we obtain
\[
A_{\max}(-Y,-K)=-A_{\min}(-Y,-K)=A_{\max}(Y,K).
\]
Therefore,
\[
\nu^+(-Y,-K)=A_{\max}(-Y,-K)=A_{\max}(Y,K)=\nu^+(Y,K).\qedhere
\]
\end{proof}

We now prove Theorem~\ref{KcUY1}, whose statement we recall for convenience.

\begingroup
\renewcommand{\thethm}{1.3}
\begin{thm}
\addtocounter{thm}{-1}
Let $K$ and $K'$ be $\nu^+$-sharp knots in rational homology $3$-spheres $Y$ and $Y'$ respectively. If, in addition, $K'$ is totally locally trivial, then $K\#K'$ is $\nu^+$-sharp and
\[
g_{(Y\# Y')\times I}(K\# K') = g_{Y\times I}(K) + g_{Y'\times I}(K').
\]
\end{thm}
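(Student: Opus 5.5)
We argue by sandwiching the genus of $K\#K'$ between the same quantity from above and below. The upper bound is Lemma~\ref{lem:subadditivity}:
\[
g_{(Y\#Y')\times I}(K\#K')\le g_{Y\times I}(K)+g_{Y'\times I}(K').
\]
For the lower bound, Theorem~\ref{nuleqprsg} applied to $K\#K'$ and to $-(K\#K')=(-K)\#(-K')\subset -(Y\#Y')$ gives
\[
g_{(Y\#Y')\times I}(K\#K')\ge\max\{\nu^+(Y\#Y',K\#K'),\ \nu^+(-Y\#-Y',-K\#-K')\}.
\]
It therefore suffices to show that this maximum equals $g_{Y\times I}(K)+g_{Y'\times I}(K')$. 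Once this is shown, equality holds throughout, and $K\#K'$ is $\nu^+$-sharp.

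The key step is to compute $\nu^+$ of the connected sum using Proposition~\ref{nuadditivity}. Since $K'$ is totally locally trivial, $K'$ is locally trivial with respect to every $\mathfrak{s}'$. Every $\mathrm{Spin}^c$ structure on $Y\#Y'$ has the form $\mathfrak{s}\#\mathfrak{s}'$. Applying Proposition~\ref{nuadditivity} with $K_1=K'$ (connected sum is symmetric) gives
\[
\nu^+_{\mathfrak{s}\#\mathfrak{s}'}(Y\#Y',K\#K')=\nu^+_{\mathfrak{s}}(Y,K)+\nu^+_{\mathfrak{s}'}(Y',K')
\]
for all $\mathfrak{s},\mathfrak{s}'$. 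The maximum over $\mathfrak{s}\#\mathfrak{s}'$ of a sum of independent terms is the sum of the maxima, so
\[
\nu^+(Y\#Y',K\#K')=\nu^+(Y,K)+\nu^+(Y',K').
\]
By Lemma~\ref{tltnusym}, $-K'\subset -Y'$ is also totally locally trivial, so the same argument gives
\[
\nu^+(-Y\#-Y',-K\#-K')=\nu^+(-Y,-K)+\nu^+(-Y',-K').
\]
Lemma~\ref{tltnusym} also gives $\nu^+(-Y',-K')=\nu^+(Y',K')$, and $\nu^+$-sharpness of $K'$ says this common value is $g_{Y'\times I}(K')$.

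It follows that the maximum of the two $\nu^+$ values of the connected sum is
\[
\max\{\nu^+(Y,K),\nu^+(-Y,-K)\}+g_{Y'\times I}(K')=g_{Y\times I}(K)+g_{Y'\times I}(K'),
\]
where the last equality uses $\nu^+$-sharpness of $K$. This closes the sandwich.

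The main point needing care is that the $\nu^+$ bound must be taken symmetrically, and that pulling the common summand out of the maximum is valid. This is exactly why Lemma~\ref{tltnusym} is needed: it guarantees that the $K'$ contribution is the same on both sides, regardless of which of $\nu^+(Y,K)$ and $\nu^+(-Y,-K)$ realizes the maximum for $K$.

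Two routine identifications should also be stated explicitly. First, $-(K\#K')$ is $(-K)\#(-K')$ in $(-Y)\#(-Y')$. Second, the $\mathrm{Spin}^c$ structures on a connected sum are exactly the sums $\mathfrak{s}\#\mathfrak{s}'$.
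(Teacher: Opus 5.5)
Your proposal is correct and follows the paper's proof, using the same ingredients in the same way: subadditivity of the rational slice genus, the $\nu^+$ lower bound for both $K\#K'$ and $-K\#-K'$, Proposition~\ref{nuadditivity} with $K_1=K'$ in each $\mathrm{Spin}^c$ structure, and Lemma~\ref{tltnusym} to make the $K'$ contribution the same on both sides. The only difference is organizational: the paper splits into cases according to whether $g_{Y\times I}(K)$ equals $\nu^+(Y,K)$ or $\nu^+(-Y,-K)$ and checks $\nu^+$-sharpness afterwards, whereas your symmetric-maximum lower bound handles both cases and sharpness in a single sandwich.
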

\endgroup


\begin{proof}
By assumption and Lemma~\ref{tltnusym}, we have
\[
g_{Y \times I}(K)=\max\{\nu^+(Y,K), \nu^+(-Y,-K)\}
\qquad\text{ and }\qquad
g_{Y'\times I}(K')=\nu^+(Y',K')=\nu^+(-Y',-K').
\]

We first consider the case where
\[
g_{Y \times I}(K)=\nu^+(Y,K).
\]
By Lemma~\ref{lem:subadditivity} and Theorem~\ref{nuleqprsg}, we have
\begin{align*}
\nu^+(Y\#Y',K\#K')
&=\max_{\mathfrak{o}\in {\rm Spin^c}(Y\#Y')}\nu^+_{\mathfrak{o}}(Y\#Y',K\#K')\\
&\le g_{(Y\#Y') \times I}(K\#K')\\
&\le g_{Y \times I}(K)+g_{Y' \times I}(K').
\end{align*}
On the other hand, under the standard identification
\[
{\rm Spin^c}(Y\#Y')\cong {\rm Spin^c}(Y)\times {\rm Spin^c}(Y'),
\]
every $\mathfrak{o}\in{\rm Spin^c}(Y\#Y')$ can be written as $\mathfrak{o}=\mathfrak{s}\#\mathfrak{t}$.
Proposition~\ref{nuadditivity} therefore implies that
\[
\nu^+_{\mathfrak{o}}(Y\#Y',K\#K')
=\nu^+_{\mathfrak{s}}(Y,K)+\nu^+_{\mathfrak{t}}(Y',K').
\]
Taking the maximum on both sides, we obtain
\[
\nu^+(Y\#Y',K\#K')=\nu^+(Y,K)+\nu^+(Y',K').
\]
Since
\[
g_{Y \times I}(K)=\nu^+(Y,K)
\qquad\text{ and }\qquad g_{Y' \times I}(K')=\nu^+(Y',K'),
\]
it follows that
\[
g_{Y \times I}(K)+g_{Y' \times I}(K')
=\nu^+(Y\#Y', K\#K')
\le g_{(Y\#Y') \times I}(K\#K')
\le g_{Y \times I}(K)+g_{Y' \times I}(K').
\]
Hence,
\[
g_{(Y\#Y') \times I}(K\#K')=g_{Y \times I}(K)+g_{Y' \times I}(K').
\]

If instead $g_{Y \times I}(K)=\nu^+(-Y,-K)$, then applying the same argument to $(-Y\#-Y',-K\#-K')$ yields the corresponding equality for $(Y\#Y',K\#K')$.

Finally, we show that $K\#K'\subset Y\#Y'$ is also $\nu^+$-sharp. Since, by Lemma~\ref{tltnusym}, $-K' \subset -Y'$ is also totally locally trivial, Proposition~\ref{nuadditivity} gives
\[
\nu^+(-Y\#-Y',-K\#-K')=\nu^+(-Y,-K)+\nu^+(-Y',-K').
\]
Therefore,
\begin{align*}
g_{(Y\#Y') \times I}(K\#K')
&=g_{Y \times I}(K)+g_{Y' \times I}(K')\\
&=\max\{\nu^+(Y,K),\nu^+(-Y,-K)\}+g_{Y' \times I}(K')\\
&=\max\{\nu^+(Y,K)+\nu^+(Y',K'),\ \nu^+(-Y,-K)+\nu^+(-Y',-K')\}\\
&=\max\{\nu^+(Y\#Y',K\#K'),\ \nu^+(-Y\#-Y',-K\#-K')\},
\end{align*}
which concludes the proof.
\end{proof}

By definition, Lemma \ref{tltnusym} and \cite[Theorem 1.4]{WY}, Floer simple knots are totally locally trivial and $\nu^+$-sharp. We now show that the unknot in a rational homology $3$-sphere is also totally locally trivial and $\nu^+$-sharp.

\begin{lem}\label{UinQHS}
If $U$ is the unknot in a rational homology $3$-sphere $Y'$, then $U$ is totally locally trivial and $\nu^+$-sharp. More precisely, 
for any $\mathfrak{t}\in {\rm Spin^c}(Y')$, there is a filtered chain homotopy equivalence
\[
CFK^{\infty}(Y',U,\mathfrak{t}) \simeq CFK^{\infty}(S^3,U) \oplus A,
\]
where $CFK^{\infty}(S^3,U)$ denotes the full knot Floer complex of the unknot in $S^3$ and $A$ is an acyclic complex. In addition, every generator of $CFK^{\infty}(Y',U,\mathfrak{t})$ has Alexander grading $0$.
\end{lem}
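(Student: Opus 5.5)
The plan is to realize $U\subset Y'$ as the connected sum $(S^3,U)\#(Y',U_{Y'})$. Here $U_{Y'}$ is the unknot in $Y'$, which we view as a knot whose doubly pointed Heegaard diagram is obtained from a diagram for $Y'$ by placing $w$ and $z$ in the same region. Concretely, we start from a Heegaard diagram $(\Sigma,\alpha,\beta,w)$ for $Y'$ and put a second basepoint $z$ adjacent to $w$, in the same component of $\Sigma\smallsetminus(\alpha\cup\beta)$. This doubly pointed diagram represents the unknot in $Y'$, since the knot is the union of two arcs, one in each handlebody, joining $w$ and $z$ while avoiding the attaching disks, and it therefore bounds a disk in a ball.

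Because $w$ and $z$ lie in the same region, $n_w(\phi)=n_z(\phi)$ for every Whitney disk $\phi$. Two consequences follow.

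\begin{itemize}
\item The relative Alexander grading between any two generators vanishes, so all generators in a fixed $\mathfrak{t}$ share one Alexander grading. The absolute value is then fixed by the symmetry of $\widehat{HFK}$ together with Lemma~\ref{Amdrspc}. Since $[U]=0$, we have $\mathfrak{t}+PD[U]=\mathfrak{t}$, so $A_{Y',U}(\xi^0_{\mathfrak{t}})=\tfrac12 d(Y',\mathfrak{t})-\tfrac12 d(Y',\mathfrak{t})=0$. The homology of $CFK^\infty$ is supported at the middle relative $\mathrm{Spin}^c$ structure, so the common grading is $0$. This proves the last sentence of the lemma.
\item The complex is a product: $CFK^{\infty}(Y',U,\mathfrak{t})\cong CF^{\infty}(Y',\mathfrak{t})$ with $j=i$ on every generator. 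Equivalently, $CFK^\infty(Y',U,\mathfrak{t})\cong \widehat{CF}(Y',\mathfrak{t})\otimes_{\mathbb{F}}\mathbb{F}[U,U^{-1}]$ up to the choice of the $U$-power, with the $(i,j)$ filtration collapsed to the diagonal.
\end{itemize}

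Next we split off the unknot summand. Over $\mathbb{F}[U,U^{-1}]$, the complex $CF^\infty(Y',\mathfrak{t})$ has homology $\mathbb{F}[U,U^{-1}]$, since $Y'$ is a rational homology sphere. Because $\mathbb{F}[U,U^{-1}]$ is a field in each degree and the complex is free, a standard change of basis splits it as a one-generator complex $\langle x\rangle$, with $x$ generating homology, plus an acyclic complex $A$.

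We must check that this change of basis respects the filtration. Every generator has $j-i=0$, so the double filtration is determined by the single filtration $i$. Reducing the differential via cancellation over $\mathbb{F}[U]$ gives $CF^-\simeq \mathbb{F}[U]\oplus(\text{torsion})$, and we need this decomposition to be filtered. The intended way to arrange this is to work with a reduced model, so that the differential strictly lowers $i$. In that model the $U$-torsion part of $HF^-$ appears as pairs $y\mapsto U^k z$, and these pairs become acyclic after inverting $U$. We do not simply quote the usual "standard complex" argument, because it only shows acyclicity after localization, not that the pieces are filtered summands.

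This filtered splitting is the main obstacle. The approach is to first prove the splitting for $CF^-(Y',\mathfrak{t})$ as an $\mathbb{F}[U]$-module complex, via the structure theorem for finitely generated complexes over the PID $\mathbb{F}[U]$: a free complex over a PID is homotopy equivalent to a direct sum of elementary complexes $\mathbb{F}[U]$ and $\mathbb{F}[U]\xrightarrow{U^k}\mathbb{F}[U]$. Tensoring with $\mathbb{F}[U,U^{-1}]$ then gives filtered summands, with respect to the $i$-filtration, of the required form. Since $j=i$, these are also doubly filtered summands. The elementary summands with $k\ge1$ become acyclic after inversion, and there is exactly one free summand, which is $CFK^\infty(S^3,U)$ placed at Alexander grading $0$ (and at Maslov grading shifted by $d(Y',\mathfrak{t})$).

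Having established the splitting, total local triviality is immediate, and Remark~\ref{rem:localtrivial_nu} gives $\nu^+_{\mathfrak{t}}(Y',U)=0$ for all $\mathfrak{t}$, hence $\nu^+(Y',U)=0$. The same argument applied to $-U\subset -Y'$ gives $\nu^+(-Y',-U)=0$. Since $U$ bounds a disk, $g_{Y'\times I}(U)=0$, so $U$ is $\nu^+$-sharp.
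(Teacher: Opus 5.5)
Your proof is correct, and it takes a genuinely different route from the paper.

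\textbf{The paper's route.} The paper never chooses a Heegaard diagram. It combines the genus bound of Theorem~\ref{nuleqprsg} (using $g_{Y'\times I}(U)=0$) with the lower bound $\nu^+_{\mathfrak t}\ge A_{Y',U}(\xi^0_{\mathfrak t})=0$ from Lemmas~\ref{Amdrspc} and~\ref{nugr}. This gives $\nu^+_{\mathfrak t}=0$, hence $V_{\xi^0_{\mathfrak t}}=0$, for both $U\subset Y'$ and $-U\subset -Y'$. It then invokes the criterion of \cite[Proposition~6.1]{WY} and \cite[Proposition~3.11]{Hom}, which detects local triviality from this vanishing for a knot and its mirror. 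The Alexander-grading claim comes from genus detection by $\widehat{HFK}$ together with its symmetry, using $g_{Y'}(U)=0$.

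\textbf{Your route.} You place $w$ and $z$ in the same region, so $n_w=n_z$ identifies $CFK^\infty(Y',U,\mathfrak t)$ with $CF^\infty(Y',\mathfrak t)$ on a single diagonal. The filtered splitting then comes from the elementary-divisor decomposition of the free graded $\mathbb{F}[U]$-complex $CF^-(Y',\mathfrak t)$. This stays a doubly filtered splitting after inverting $U$, because $C\{i\le n\}=U^{-n}CF^-$ and $j=i$. Finally, $\nu^+$-sharpness follows from Remark~\ref{rem:localtrivial_nu}.

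\textbf{What each buys.} Your argument is self-contained: no four-dimensional input, no local-triviality criterion, no genus detection. It also proves the grading statement on the nose for an explicit chain model, and it records the Maslov shift $d(Y',\mathfrak t)$ that the statement leaves implicit. The paper's argument is shorter given the tools already in play, and its first half is more general. Read verbatim, it shows that any null-homologous knot in $Y'$ bounding a disk in $Y'\times I$ is totally locally trivial; only the Alexander-grading claim is specific to the unknot.

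\textbf{One loose step.} Your reason why the common Alexander grading $c$ equals $0$ (``homology is supported at the middle relative $\mathrm{Spin}^c$ structure'') should be made precise. On a complex concentrated on $j-i=c$ one computes $V_\xi=\max\{0,c-A(\xi)\}$ and $H_\xi=\max\{0,A(\xi)-c\}$. So $\xi^0_{\mathfrak t}$ is exactly where $A=c$, and Lemma~\ref{Amdrspc} gives $c=0$.
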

\begin{proof}
Since $g_{Y' \times I}(U)=0$, Theorem~\ref{nuleqprsg} implies that for any $\mathfrak{t}\in{\rm Spin^c}(Y')$,
\[
\nu_{\mathfrak{t}}^+(Y',U) \leq \nu^+(Y',U) \leq g_{Y' \times I}(U)=0.
\]
Using Lemma~\ref{Amdrspc} and Lemma~\ref{nugr}, we obtain that for any $\mathfrak{t} \in {\rm Spin^c}(Y')$,
\[
A_{Y',U}(\xi^0_{\mathfrak{t}})
=\frac{1}{2}d(Y',\mathfrak{t})-\frac{1}{2}d(Y',\mathfrak{t}+PD[U])=0 \leq \nu^+_{\mathfrak{t}}(Y',U),
\]
since $U$ is null-homologous. This shows that $\nu^+(Y',U)=\nu_{\mathfrak{t}}^+(Y',U)=0$ for all $\mathfrak{t}\in{\rm Spin^c}(Y')$. 

By the definition of $\nu_{\mathfrak{t}}^+$, it follows that
\[ 
V_{\xi^0_{\mathfrak{t}}}(Y',U)=0
\]
for all $\mathfrak{t}\in{\rm Spin^c}(Y')$. Similarly, since $g_{(-Y')\times I}(-U)=0$, we also have
\[\nu^+(-Y',-U)=\nu^+_{\mathfrak{t}}(-Y',-U)=0 \qquad \text{and} \qquad
V_{\xi^0_{\mathfrak{t}}}(-Y',-U)=0
\]
for all $\mathfrak{t}\in{\rm Spin^c}(Y')$. Thus, 
\[\max\{\nu^+(Y',U),\nu^+(-Y',-U)\}=g_{Y' \times I}(U)=0,\]
and the conclusion that $U$ is totally locally trivial now follows from~\cite[Proposition~6.1]{WY} and~\cite[Proposition~3.11]{Hom}.

Finally, knot Floer homology detects the (rational) Seifert genus~\cite[Theorem~1.1]{Ni} \cite[Theorem 2.2]{NW0} and satisfies the symmetry ~\cite[Section~2.2]{HL} 
\[
\widehat{HFK}(Y,K,r)\cong \widehat{HFK}(Y,K,-r),
\]
where
\[
\widehat{HFK}(Y,K,r)=\mathop{\bigoplus}\limits_{\{\xi \in \underline{{\rm Spin^c}}(Y,K)\mid A_{Y,K}(\xi)=r\}} \widehat{HFK}(Y,K,\xi).
\]
Then we conclude that all generators of $CFK^{\infty}(Y',U,\mathfrak{t})$ have Alexander grading $0$ by $g_{Y'}(U)=0$.
\end{proof}


\section{Examples of $\nu^+$-sharp knots and Floer simple knots}
\label{sec:examples_nu_sharp_floer_simple}

\subsection{$\nu^+$-sharp knots in $S^3$}
There are many $\nu^+$-sharp knots in $S^3$. In this subsection, we present several families of knots with this property.

\subsubsection{Examples with $|\tau(K)|=g_4(K)$}

The $\tau$-invariant is a knot concordance invariant derived from knot Floer homology. Ozsv\'ath and Szab\'o~\cite{OSf}, together with~\cite[Proposition~2.3]{HW}, showed that
\[
|\tau(K)| \leq \max\{\nu^+(K), \nu^+(-K)\} \leq g_4(K).
\]
Therefore, if a knot $K\subset S^3$ satisfies $|\tau(K)|=g_4(K)$, then $K$ is $\nu^+$-sharp. We now record some examples of knots with this property.

\begin{itemize}
\item[(a)] $L$-space knots, that is, knots admitting a positive surgery to an $L$-space.
Examples include Berge knots~\cite{Berge}, some twisted torus knots~\cite[Theorem~1.1]{Vafaee}, some Montesinos knots~\cite[Theorem~1]{BaMo,LiMo}, some $(1,1)$ knots~\cite[Theorem~1.2]{GLV}, all algebraic knots~\cite[Theorem~2]{GorNe}, and some satellite knots~\cite{Hom11,HLV15,HRW}.

\item[(b)] Quasipositive knots~\cite{Pla,Hed10}, that is, knots that are closures of braids consisting of arbitrary conjugates of positive generators. Quasipositive knots are equivalent to transverse $\mathbb{C}$-knots, that is, knots arising as the transverse intersection of a complex curve with the unit sphere $S^3\subset \mathbb{C}^2$. They include positive braids, positive knots, and strongly quasipositive knots. See~\cite{Hed10} for more details.

\item[(c)] The non-positively twisted, positive-clasped Whitehead double of any knot $K$ with $\tau(K)>0$~\cite[Theorem~1.5]{Hed09}.

\item[(d)] Knots that are squeezed between a positive torus knot and the unknot~\cite[Proposition~4.1]{FLL24}, that is, knots arising as slices of genus-minimizing, oriented, connected, smooth cobordisms between a positive torus knot and the unknot.

\item[(e)] Linear combinations of torus knots of the form $aT(p,q)\#-bT(p',q')$, for certain choices of positive integers $p,q,p',q',a,b$~\cite[Theorems~2, 3, 16]{LiVa}.

\item[(f)] The untwisted satellites of any knot $K$ with $\tau(K)=g_4(K)>0$ by $L$-space satellite operators~\cite[Theorem~1.5]{CZZ25}, and the non-negative twisted satellites of any knot $K$ with $\tau(K)=g_4(K)>0$ by any $L$-space satellite operator with minimal wrapping number~\cite[Proposition~1.8]{CZZ25}.

\item[(g)] Any connected sum of knots satisfying $\tau(K)=g_4(K)$ retains this property, since $\tau$ is additive under connected sum.
\end{itemize}

\subsubsection{Examples with $|\tau(K)|<\max\{\nu^+(K), \nu^+(-K)\}=g_4(K)$}
\label{egtaulesnueqg4}
In this subsection, we present some examples of knots satisfying
\[
|\tau(K)|<\max\{\nu^+(K), \nu^+(-K)\}=g_4(K).
\]

\noindent \textit{Livingston and Van Cott's examples:}
Livingston and Van Cott~\cite[Theorems~2 and~16]{LiVa} show that the following linear combinations of torus knots
\begin{itemize}
\item[(a)] $aT(p,qr)\#-bT(q,pr)$ with $q>p>0$, $q/(q-p)>r>0$ and $a>b>0$;
\item[(b)] $aT(2,10r+1)\#-bT(3,6r+1)$ with $a>b>0$ and $r>0$;
\item[(c)] $aT(2,10r+3)\#-bT(3,6r+2)$ with $a>b>0$ and $r>0$;
\item[(d)] $aT(2,10r+1)\#-bT(4,4r+1)$ with $a>b>0$ and $r>0$
\end{itemize}
satisfy
\[
|\tau(K)|<\max_{t\in(0,1]} \frac{|\Upsilon_K(t)|}{t}=g_4(K),
\]
where $\Upsilon_K(t)$ is also a knot concordance invariant derived from knot Floer homology. For any knot $K\subset S^3$,
\[
|\tau(K)| \leq \max_{t\in(0,1]} \frac{|\Upsilon_K(t)|}{t} \leq \max\{\nu^+(K), \nu^+(-K)\} \leq g_4(K).
\]
Therefore, these knots are $\nu^+$-sharp.

\vspace{1.5mm}

\noindent \textit{Hom and Wu's examples, and cabling:}
Hom and the second author~\cite{HW} constructed $\nu^+$-sharp examples with $|\tau(K)|<g_4(K)$, namely the knot
$T_{2,5}\#2T_{2,3}\#-T_{2,3;2,5}$, where $T_{2,3;2,5}$ denotes the $(2,5)$-cable of $T_{2,3}$. They then applied a cabling formula for $\nu^+$~\cite[Propositions~3.5--3.6]{HW} (see also~\cite[Corollary~1.2]{Wu16}) to show that for any positive integer $p$, there exists a knot $K$ with
\[
|\tau(K)|+p \leq \max\{ \nu^+(K), \nu^+(-K)\}=g_4(K),
\]
by considering the $(p,3p-1)$-cable of $T_{2,5}\#2T_{2,3}\#-T_{2,3;2,5}$.

In addition, Sato extended the cabling formula to any positive cable.

\begin{thm}[{\cite[Corollary 1.4]{Sato}}]\label{nucabling}
Given a knot $K \subset S^3$, if $\nu^+(K)=g_4(K)$, then for any coprime $p,q>0$, we have
\[
\nu^+(K_{p,q})=g_4(K_{p,q})=pg_4(K)+\frac{(p-1)(q-1)}{2}.
\]
\end{thm}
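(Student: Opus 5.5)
The plan is to combine two inequalities. The first is the upper bound $g_4(K_{p,q})\le p\,g_4(K)+\frac{(p-1)(q-1)}{2}$, which comes from an explicit surface. The second is the lower bound $\nu^+(K_{p,q})\ge p\,\nu^+(K)+\frac{(p-1)(q-1)}{2}$, which comes from a cabling inequality for $V$-invariants. Since $\nu^+\le g_4$, the chain
\[
p\,\nu^+(K)+\tfrac{(p-1)(q-1)}{2}\le \nu^+(K_{p,q})\le g_4(K_{p,q})\le p\,g_4(K)+\tfrac{(p-1)(q-1)}{2}
\]
collapses to equalities once we impose the hypothesis $\nu^+(K)=g_4(K)$.

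For the upper bound, take a genus-minimizing slice surface $F$ for $K$ in $B^4$ and $p$ parallel copies of it, pushed off using the Seifert framing. Their boundary is the $(p,0)$-cable of $K$. Join them to the torus link pattern by adding $(p-1)q$ half-twisted bands, as in the standard construction of the Seifert surface of $T_{p,q}$ by $p$ disks and $(p-1)q$ bands. The result is a connected surface with Euler characteristic
\[
p(1-2g_4(K))-(p-1)q,
\]
and hence genus $p\,g_4(K)+\frac{(p-1)(q-1)}{2}$. Equivalently, one can first cobord $K_{p,q}$ to the cable pattern and then cap off.

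For the lower bound, I would use a Wu-type cabling inequality for $V$-invariants, as in \cite{Wu16} or via the large-surgery and cabling arguments of Hom--Wu~\cite[Propositions~3.5--3.6]{HW}:
\[
V_{pk+\frac{(p-1)(q-1)}{2}-j}(K_{p,q})\ \ge\ V_{k}(K)\quad\text{for suitable } 0\le j<p .
\]
The precise form I would aim for is that $V_m(K_{p,q})=0$ forces $V_{\lceil (m-(p-1)(q-1)/2)/p\rceil}(K)=0$. Then, with $m=\nu^+(K_{p,q})$, we get
\[
\nu^+(K)\le \left\lceil \frac{\nu^+(K_{p,q})-(p-1)(q-1)/2}{p}\right\rceil,
\]
which gives $\nu^+(K_{p,q})\ge p\,\nu^+(K)+\frac{(p-1)(q-1)}{2}-(p-1)$. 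This is weaker by up to $p-1$, so something more is needed.

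\textbf{Main obstacle.} The real difficulty is removing this error term. I would try to establish the sharp inequality $V_{pk+\frac{(p-1)(q-1)}{2}-1}(K_{p,q})\ge 1$ whenever $V_{k-1}(K)\ge 1$, with $k=\nu^+(K)$. The natural route is to identify $S^3_{pq\pm1}(K_{p,q})$ with surgery on $K$, namely $S^3_{(pq\pm1)/p^2}(K)$, and to compare $d$-invariants through the Ni--Wu rational surgery formula. For $q$ large relative to $p$, this formula expresses the relevant $d$-invariants exactly in terms of $V_{\lfloor i/p\rfloor}(K)$, which yields the sharp bound. For small $q$, I would reduce to large $q$ using the monotonicity $V_m(K_{p,q})\ge V_m(K_{p,q'})-\text{(shift)}$ under the crossing-change cobordism between $K_{p,q}$ and $K_{p,q+p}$. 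Alternatively, since $\nu^+$ and $g_4$ are concordance invariants, I would check the genus bookkeeping for that cobordism, which changes $g_4$ by at most $\frac{(p-1)p}{2}$, and argue that this bookkeeping suffices. That sharp cabling lower bound is the step I expect to be hardest.
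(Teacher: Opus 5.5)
The paper gives no proof of its own here; it quotes Sato, whose contribution is a full-twist estimate for $\nu^+$ that carries Wu's large-$q$ cabling formula \cite{Wu16} over to all $q>0$. Your outline has the same architecture, and your upper bound is fine: $p$ Seifert-framed pushoffs of a minimal slice surface, joined by $(p-1)q$ bands. The gap is that the one step with real content, the sharp bound $\nu^+(K_{p,q})\ge p\,\nu^+(K)+\frac{(p-1)(q-1)}{2}$, is left as an ``obstacle''. Moreover, neither of your two formulations of the reduction to large $q$ is the right inequality.

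\begin{itemize}
\item A bound $V_m(K_{p,q})\ge V_m(K_{p,q'})-(\text{shift})$, with the shift applied to the value, only says that $V_m(K_{p,q'})$ is small when $V_m(K_{p,q})=0$. It does not tell you where $V$ vanishes, so it says nothing about $\nu^+$.
\item Bookkeeping with $g_4$ only gives $g_4(K_{p,q})$. It does not give the claimed equality $\nu^+(K_{p,q})=g_4(K_{p,q})$.
\end{itemize}

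What you need is the index-shifted statement for $\nu^+$. If $J$ and $J'$ cobound a genus-$G$ surface in $S^3\times I$, then $\nu^+(J')\le\nu^+(J)+G$; equivalently, $V_{m+G}(J')\le V_m(J)$. This follows from Theorem~\ref{thm:nuSubadditivity} and Theorem~\ref{nuleqprsg}, since
\[
\nu^+(J')=\nu^+\bigl((J'\#-J)\#J\bigr)\le g_4(J'\#-J)+\nu^+(J).
\]
Resolving the $p(p-1)$ crossings of each full twist by oriented saddles gives a connected cobordism of genus $k\binom{p}{2}$ between $K_{p,q+kp}$ and $K_{p,q}$. Choose $k$ so that $q'=q+kp$ lies in the range of Wu's formula. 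Then
\[
\nu^+(K_{p,q})\ \ge\ \nu^+(K_{p,q'})-k\tbinom{p}{2}\ =\ p\,\nu^+(K)+\tfrac{(p-1)(q+kp-1)}{2}-\tfrac{kp(p-1)}{2}\ =\ p\,\nu^+(K)+\tfrac{(p-1)(q-1)}{2}.
\]
There is no loss at all; this is the computation behind ``the bookkeeping suffices.'' It holds for every $K$, and the hypothesis $\nu^+(K)=g_4(K)$ is used only to close your sandwich.

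The large-$q$ input should be cited from \cite{Wu16} rather than re-derived. Your sketch via $S^3_{(pq\pm1)/p^2}(K)$ would involve $V_{\lfloor i/p^2\rfloor}(K)$, not $V_{\lfloor i/p\rfloor}(K)$. The real work there is matching $\mathrm{Spin}^c$ structures between the two surgery descriptions, which your sketch does not address. With these two pieces in place, you can discard the error-$(p-1)$ estimate in your second paragraph.
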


\noindent Applying Theorem~\ref{nucabling} together with Hom's cabling formula for $\tau$~\cite[Theorem~1]{Hom14}, it follows that this phenomenon is preserved under positive cabling. Namely, if $K$ is $\nu^+$-sharp and satisfies $|\tau(K)|<g_4(K)$, then so does any positive cable of $K$.

\subsection{$\nu^+$-sharp knots in rational homology $3$-spheres}

As stated in Theorem~\ref{KcUY1} (and related results such as Corollary~\ref{4gad}), we can immediately construct examples of $\nu^+$-sharp knots by taking connected sums with unknots or Floer simple knots. In addition, there are examples not obtained in this manner, namely $L$-space knots in $L$-spaces.

As in the case of $S^3$, a knot $K$ in an $L$-space $Y$ that admits a positive $L$-space surgery, equivalently, an \textit{$L$-space knot}, satisfies
\begin{equation}\label{Lskn}
\nu^+(Y,K)=\max\{\nu^+(Y,K), \nu^+(-Y,-K)\}=g_{Y}(K)=g_{Y \times I}(K).
\end{equation}
As in the case of $L$-space knots in $S^3$, the knot Floer complex of an $L$-space knot in an $L$-space has a staircase shape~\cite[Lemma~3.2]{Ras17}. Combined with the fact that knot Floer homology detects knot genus~\cite[Theorem~1.1]{Ni}~\cite[Theorem~2.2]{NW0}, this implies that $\nu^+(Y,K)=g_Y(K)$, and hence~\eqref{Lskn}.

Work on $L$-space knots in $L$-spaces other than $S^3$ is relatively sparse. Here we list a few examples, focusing primarily on knots in lens spaces.
\begin{itemize}
\item[(a)] Berge-Gabai knots~\cite{Gab90} (or their mirror images) in lens spaces. These include all torus knots in a solid torus of a lens space.
\item[(b)] All $1$-bridge braids in lens spaces~\cite[Theorem~1.4]{GLV}.
\item[(c)] Some $(1,1)$ knots in lens spaces. In~\cite[Theorem~1.2]{GLV}, Greene, Lewallen, and Vafaee characterize the $(1,1)$ $L$-space knots in lens spaces.
\end{itemize}
There is overlap among (a)--(c). In particular,
\[
\text{(a)}\smallsetminus\{\text{torus knots}\}\subset \text{(b)}\subset \text{(c)}.
\]

\subsection{Floer simple knots}
\label{Floer simple knots}

Below, we list some examples of Floer simple knots.
\begin{itemize}
\item[(a)] The unknot is the unique null-homologous Floer simple knot in an $L$-space.
\item[(b)] Simple knots in lens spaces~\cite[Section~2.1]{Ras07}.
\item[(c)] Let $Y$ be a compact, connected, oriented $3$-manifold with torus boundary, and let $\mathcal{L}(Y)$ denote the set of $L$-space filling slopes of $Y$. For a slope $\alpha$, write $Y(\alpha)$ for the closed manifold obtained by Dehn filling $Y$ along $\alpha$, and let $K_\alpha \subset Y(\alpha)$ be the core of the filling solid torus. If $\alpha \in \mathcal{L}^{\circ}(Y)$, the interior of $\mathcal{L}(Y)$, then $K_{\alpha}$ is Floer simple in $Y(\alpha)$~\cite[Corollary~3.6]{Ras17}.

In particular, the dual knot to $\alpha$-surgery on an $L$-space knot $K \subset S^3$ is Floer simple whenever $\alpha \in (2g(K)-1,\infty)$, where $g(K)$ denotes the Seifert genus of $K$ (see also~\cite{Gre15,Ras07,Hed11}).

\item[(d)] Any connected sum of Floer simple knots is Floer simple.
\end{itemize}

\bibliographystyle{alpha}
\bibliography{tex}

@article {BaMo,
    AUTHOR = {Baker, Kenneth L. and Moore, Allison H.},
     TITLE = {Montesinos knots, {H}opf plumbings, and {L}-space surgeries},
   JOURNAL = {J. Math. Soc. Japan},
  FJOURNAL = {Journal of the Mathematical Society of Japan},
    VOLUME = {70},
      YEAR = {2018},
    NUMBER = {1},
     PAGES = {95--110},
      ISSN = {0025-5645,1881-1167},
   MRCLASS = {57M25 (57M27)},
  MRNUMBER = {3750269},
MRREVIEWER = {Masakazu\ Teragaito},
       DOI = {10.2969/jmsj/07017484},
       URL = {https://doi.org/10.2969/jmsj/07017484},
}

@article {Kronheimer-Mrowka:1993,
    AUTHOR = {Kronheimer, Peter B. and Mrowka, Tomasz S.},
     TITLE = {Gauge theory for embedded surfaces. {I}},
   JOURNAL = {Topology},
  FJOURNAL = {Topology. An International Journal of Mathematics},
    VOLUME = {32},
      YEAR = {1993},
    NUMBER = {4},
     PAGES = {773--826},
      ISSN = {0040-9383},
   MRCLASS = {57R57 (57N13 57R40 57R55 58D29)},
  MRNUMBER = {1241873},
MRREVIEWER = {Ronald\ J.\ Stern},
       DOI = {10.1016/0040-9383(93)90051-V},
       URL = {https://doi.org/10.1016/0040-9383(93)90051-V},
}

@article {Berge,
    AUTHOR = {Berge, John},
     TITLE = {Some knots with surgeries yielding lens spaces},
   JOURNAL = {preprint arXiv:1802.09722},
      YEAR = {1990}
}

@article {Raoux-Hedden:2023,
    AUTHOR = {Hedden, Matthew and Raoux, Katherine},
     TITLE = {A 4-dimensional rational genus bound},
   JOURNAL = {preprint arXiv:2308.16853},
      YEAR = {2023}
}

@article {CZZ25,
    AUTHOR = {Chen, Daren and Zemke, Ian and Zhou,  Hugo},
     TITLE = {Applications of the {L}-space satellite formula},
   JOURNAL = {preprint arXiv:2509.20288},
      YEAR = {2025},
}

@article {DNPR,
    AUTHOR = {Davis, Christopher W. and Nagel, Matthias and Park, JungHwan
              and Ray, Arunima},
     TITLE = {Concordance of knots in {$S^1\times S^2$}},
   JOURNAL = {J. Lond. Math. Soc. (2)},
  FJOURNAL = {Journal of the London Mathematical Society. Second Series},
    VOLUME = {98},
      YEAR = {2018},
    NUMBER = {1},
     PAGES = {59--84},
      ISSN = {0024-6107,1469-7750},
   MRCLASS = {57M27},
  MRNUMBER = {3847232},
MRREVIEWER = {Allison\ N.\ Miller},
       DOI = {10.1112/jlms.12125},
       URL = {https://doi.org/10.1112/jlms.12125},
}

@article {FLL24,
    AUTHOR = {Feller, Peter and Lewark, Lukas and Lobb, Andrew},
     TITLE = {Squeezed knots},
   JOURNAL = {Quantum Topol.},
  FJOURNAL = {Quantum Topology},
    VOLUME = {16},
      YEAR = {2025},
    NUMBER = {4},
     PAGES = {831--865},
      ISSN = {1663-487X,1664-073X},
   MRCLASS = {57K10 (57K18)},
  MRNUMBER = {4958851},
       DOI = {10.4171/qt/187},
       URL = {https://doi.org/10.4171/qt/187},
}

@article {Gab90,
    AUTHOR = {Gabai, David},
     TITLE = {{$1$}-bridge braids in solid tori},
   JOURNAL = {Topology Appl.},
  FJOURNAL = {Topology and its Applications},
    VOLUME = {37},
      YEAR = {1990},
    NUMBER = {3},
     PAGES = {221--235},
      ISSN = {0166-8641,1879-3207},
   MRCLASS = {57M25 (57M15 57N10 57R30 57R95)},
  MRNUMBER = {1082933},
MRREVIEWER = {Cameron\ McA.\ Gordon},
       DOI = {10.1016/0166-8641(90)90021-S},
       URL = {https://doi.org/10.1016/0166-8641(90)90021-S},
}

@article {GLV,
    AUTHOR = {Greene, Joshua Evan and Lewallen, Sam and Vafaee, Faramarz},
     TITLE = {{$(1,1)$} {L}-space knots},
   JOURNAL = {Compos. Math.},
  FJOURNAL = {Compositio Mathematica},
    VOLUME = {154},
      YEAR = {2018},
    NUMBER = {5},
     PAGES = {918--933},
      ISSN = {0010-437X,1570-5846},
   MRCLASS = {57M27},
  MRNUMBER = {3798589},
MRREVIEWER = {Kazuhiro\ Ichihara},
       DOI = {10.1112/S0010437X17007989},
       URL = {https://doi.org/10.1112/S0010437X17007989},
}

@article {GorNe,
    AUTHOR = {Gorsky, Eugene and N\'emethi, Andr\'as},
     TITLE = {Links of plane curve singularities are {$L$}-space links},
   JOURNAL = {Algebr. Geom. Topol.},
  FJOURNAL = {Algebraic \& Geometric Topology},
    VOLUME = {16},
      YEAR = {2016},
    NUMBER = {4},
     PAGES = {1905--1912},
      ISSN = {1472-2747,1472-2739},
   MRCLASS = {57M27 (14H20)},
  MRNUMBER = {3546454},
MRREVIEWER = {Meirav\ Amram},
       DOI = {10.2140/agt.2016.16.1905},
       URL = {https://doi.org/10.2140/agt.2016.16.1905},
}

@article {HRW,
    AUTHOR = {Hanselman, Jonathan and Rasmussen, Jacob and Watson, Liam},
     TITLE = {Bordered {F}loer homology for manifolds with torus boundary
              via immersed curves},
   JOURNAL = {J. Amer. Math. Soc.},
  FJOURNAL = {Journal of the American Mathematical Society},
    VOLUME = {37},
      YEAR = {2024},
    NUMBER = {2},
     PAGES = {391--498},
      ISSN = {0894-0347,1088-6834},
   MRCLASS = {57K31 (57K18 57M50)},
  MRNUMBER = {4695506},
MRREVIEWER = {Francesco\ Lin},
       DOI = {10.1090/jams/1029},
       URL = {https://doi.org/10.1090/jams/1029},
}

@article {Hed09,
    AUTHOR = {Hedden, Matthew},
     TITLE = {On knot {F}loer homology and cabling. {II}},
   JOURNAL = {Int. Math. Res. Not. IMRN},
  FJOURNAL = {International Mathematics Research Notices. IMRN},
      YEAR = {2009},
    NUMBER = {12},
     PAGES = {2248--2274},
      ISSN = {1073-7928,1687-0247},
   MRCLASS = {57M27 (57R58)},
  MRNUMBER = {2511910},
       DOI = {10.1093/imrn/rnp015},
       URL = {https://doi.org/10.1093/imrn/rnp015},
}

@article {Hed10,
    AUTHOR = {Hedden, Matthew},
     TITLE = {Notions of positivity and the {O}zsv\'ath-{S}zab\'o{}
              concordance invariant},
   JOURNAL = {J. Knot Theory Ramifications},
  FJOURNAL = {Journal of Knot Theory and its Ramifications},
    VOLUME = {19},
      YEAR = {2010},
    NUMBER = {5},
     PAGES = {617--629},
      ISSN = {0218-2165,1793-6527},
   MRCLASS = {57M27 (57M25 57N70)},
  MRNUMBER = {2646650},
       DOI = {10.1142/S0218216510008017},
       URL = {https://doi.org/10.1142/S0218216510008017},
}

@article {HL,
    AUTHOR = {Hedden, Matthew and Levine, Adam Simon},
     TITLE = {A surgery formula for knot {F}loer homology},
   JOURNAL = {Quantum Topol.},
  FJOURNAL = {Quantum Topology},
    VOLUME = {15},
      YEAR = {2024},
    NUMBER = {2},
     PAGES = {229--336},
      ISSN = {1663-487X,1664-073X},
   MRCLASS = {57K18},
  MRNUMBER = {4725826},
       DOI = {10.4171/qt/188},
       URL = {https://doi.org/10.4171/qt/188},
}

@article {Hom11,
    AUTHOR = {Hom, Jennifer},
     TITLE = {A note on cabling and {$L$}-space surgeries},
   JOURNAL = {Algebr. Geom. Topol.},
  FJOURNAL = {Algebraic \& Geometric Topology},
    VOLUME = {11},
      YEAR = {2011},
    NUMBER = {1},
     PAGES = {219--223},
      ISSN = {1472-2747,1472-2739},
   MRCLASS = {57M27 (57R58)},
  MRNUMBER = {2764041},
       DOI = {10.2140/agt.2011.11.219},
       URL = {https://doi.org/10.2140/agt.2011.11.219},
}

@article {Hom14,
    AUTHOR = {Hom, Jennifer},
     TITLE = {Bordered {H}eegaard {F}loer homology and the tau-invariant of
              cable knots},
   JOURNAL = {J. Topol.},
  FJOURNAL = {Journal of Topology},
    VOLUME = {7},
      YEAR = {2014},
    NUMBER = {2},
     PAGES = {287--326},
      ISSN = {1753-8416,1753-8424},
   MRCLASS = {57M27 (57M25 57R58)},
  MRNUMBER = {3217622},
MRREVIEWER = {Arunima\ Ray},
       DOI = {10.1112/jtopol/jtt030},
       URL = {https://doi.org/10.1112/jtopol/jtt030},
}

@article {Hom,
    AUTHOR = {Hom, Jennifer},
     TITLE = {A survey on {H}eegaard {F}loer homology and concordance},
   JOURNAL = {J. Knot Theory Ramifications},
  FJOURNAL = {Journal of Knot Theory and its Ramifications},
    VOLUME = {26},
      YEAR = {2017},
    NUMBER = {2},
     PAGES = {1740015, 24},
      ISSN = {0218-2165,1793-6527},
   MRCLASS = {57R58 (57M25 57N70)},
  MRNUMBER = {3604497},
MRREVIEWER = {Se-Goo\ Kim},
       DOI = {10.1142/S0218216517400156},
       URL = {https://doi.org/10.1142/S0218216517400156},
}

@article {HLV15,
    AUTHOR = {Hom, Jennifer and Lidman, Tye and Vafaee, Faramarz},
     TITLE = {Berge-{G}abai knots and {L}-space satellite operations},
   JOURNAL = {Algebr. Geom. Topol.},
  FJOURNAL = {Algebraic \& Geometric Topology},
    VOLUME = {14},
      YEAR = {2014},
    NUMBER = {6},
     PAGES = {3745--3763},
      ISSN = {1472-2747,1472-2739},
   MRCLASS = {57M25 (57M27 57R58)},
  MRNUMBER = {3302978},
MRREVIEWER = {Kimihiko\ Motegi},
       DOI = {10.2140/agt.2014.14.3745},
       URL = {https://doi.org/10.2140/agt.2014.14.3745},
}

@article {HW,
    AUTHOR = {Hom, Jennifer and Wu, Zhongtao},
     TITLE = {Four-ball genus bounds and a refinement of the {O}zsv\'ath-{S}zab\'o{} tau invariant},
   JOURNAL = {J. Symplectic Geom.},
  FJOURNAL = {The Journal of Symplectic Geometry},
    VOLUME = {14},
      YEAR = {2016},
    NUMBER = {1},
     PAGES = {305--323},
      ISSN = {1527-5256,1540-2347},
   MRCLASS = {57M27 (57M25 57R58)},
  MRNUMBER = {3523259},
MRREVIEWER = {Nikolai\ N.\ Saveliev},
       DOI = {10.4310/JSG.2016.v14.n1.a12},
       URL = {https://doi.org/10.4310/JSG.2016.v14.n1.a12},
}

@article {KR,
    AUTHOR = {Klug, Michael R. and Ruppik, Benjamin M.},
     TITLE = {Deep and shallow slice knots in 4-manifolds},
   JOURNAL = {Proc. Amer. Math. Soc. Ser. B},
  FJOURNAL = {Proceedings of the American Mathematical Society. Series B},
    VOLUME = {8},
      YEAR = {2021},
     PAGES = {204--218},
      ISSN = {2330-1511},
   MRCLASS = {57K40 (57K10)},
  MRNUMBER = {4273166},
MRREVIEWER = {Allison\ N.\ Miller},
       DOI = {10.1090/bproc/89},
       URL = {https://doi.org/10.1090/bproc/89},
}

@article {LiMo,
    AUTHOR = {Lidman, Tye and Moore, Allison H.},
     TITLE = {Pretzel knots with {$L$}-space surgeries},
   JOURNAL = {Michigan Math. J.},
  FJOURNAL = {Michigan Mathematical Journal},
    VOLUME = {65},
      YEAR = {2016},
    NUMBER = {1},
     PAGES = {105--130},
      ISSN = {0026-2285,1945-2365},
   MRCLASS = {57M27 (57M25 57R58)},
  MRNUMBER = {3466818},
MRREVIEWER = {Arunima\ Ray},
       URL = {http://projecteuclid.org/euclid.mmj/1457101813},
}

@article {LiVa,
    AUTHOR = {Livingston, Charles and Van Cott, Cornelia A.},
     TITLE = {The four-genus of connected sums of torus knots},
   JOURNAL = {Math. Proc. Cambridge Philos. Soc.},
  FJOURNAL = {Mathematical Proceedings of the Cambridge Philosophical
              Society},
    VOLUME = {164},
      YEAR = {2018},
    NUMBER = {3},
     PAGES = {531--550},
      ISSN = {0305-0041,1469-8064},
   MRCLASS = {57M25},
  MRNUMBER = {3784267},
MRREVIEWER = {Makoto\ Ozawa},
       DOI = {10.1017/S0305004117000342},
       URL = {https://doi.org/10.1017/S0305004117000342},
}

@article {NOPP,
    AUTHOR = {Nagel, Matthias and Orson, Patrick and Park, JungHwan and Powell, Mark},
     TITLE = {Smooth and topological almost concordance},
   JOURNAL = {Int. Math. Res. Not. IMRN},
  FJOURNAL = {International Mathematics Research Notices. IMRN},
      YEAR = {2019},
    NUMBER = {23},
     PAGES = {7324--7355},
      ISSN = {1073-7928,1687-0247},
   MRCLASS = {57K18 (57K10)},
  MRNUMBER = {4039014},
MRREVIEWER = {Daniel\ Silver},
       DOI = {10.1093/imrn/rnx338},
       URL = {https://doi.org/10.1093/imrn/rnx338},
}

@article {Ni,
    AUTHOR = {Ni, Yi},
     TITLE = {Link {F}loer homology detects the {T}hurston norm},
   JOURNAL = {Geom. Topol.},
  FJOURNAL = {Geometry \& Topology},
    VOLUME = {13},
      YEAR = {2009},
    NUMBER = {5},
     PAGES = {2991--3019},
      ISSN = {1465-3060,1364-0380},
   MRCLASS = {57M27 (57R58)},
  MRNUMBER = {2546619},
       DOI = {10.2140/gt.2009.13.2991},
       URL = {https://doi.org/10.2140/gt.2009.13.2991},
}

@article {NV,
    AUTHOR = {Ni, Yi and Vafaee, Faramarz},
     TITLE = {Null surgery on knots in {L}-spaces},
   JOURNAL = {Trans. Amer. Math. Soc.},
  FJOURNAL = {Transactions of the American Mathematical Society},
    VOLUME = {372},
      YEAR = {2019},
    NUMBER = {12},
     PAGES = {8279--8306},
      ISSN = {0002-9947,1088-6850},
   MRCLASS = {57K18},
  MRNUMBER = {4029697},
MRREVIEWER = {Duncan\ McCoy},
       DOI = {10.1090/tran/7510},
       URL = {https://doi.org/10.1090/tran/7510},
}

@article {NW0,
    AUTHOR = {Ni, Yi and Wu, Zhongtao},
     TITLE = {Heegaard {F}loer correction terms and rational genus bounds},
   JOURNAL = {Adv. Math.},
  FJOURNAL = {Advances in Mathematics},
    VOLUME = {267},
      YEAR = {2014},
     PAGES = {360--380},
      ISSN = {0001-8708,1090-2082},
   MRCLASS = {57R58 (57M25 57M27)},
  MRNUMBER = {3269182},
MRREVIEWER = {Tye\ Lidman},
       DOI = {10.1016/j.aim.2014.09.006},
       URL = {https://doi.org/10.1016/j.aim.2014.09.006},
}

@article {NW1,
    AUTHOR = {Ni, Yi and Wu, Zhongtao},
     TITLE = {Cosmetic surgeries on knots in {$S^3$}},
   JOURNAL = {J. Reine Angew. Math.},
  FJOURNAL = {Journal f\"ur die Reine und Angewandte Mathematik. [Crelle's
              Journal]},
    VOLUME = {706},
      YEAR = {2015},
     PAGES = {1--17},
      ISSN = {0075-4102,1435-5345},
   MRCLASS = {57M25 (57M27)},
  MRNUMBER = {3393360},
MRREVIEWER = {Bruno\ P.\ Zimmermann},
       DOI = {10.1515/crelle-2013-0067},
       URL = {https://doi.org/10.1515/crelle-2013-0067},
}

@article {OSf,
    AUTHOR = {Ozsv\'ath, Peter and Szab\'o, Zolt\'an},
     TITLE = {Knot {F}loer homology and the four-ball genus},
   JOURNAL = {Geom. Topol.},
  FJOURNAL = {Geometry and Topology},
    VOLUME = {7},
      YEAR = {2003},
     PAGES = {615--639},
      ISSN = {1465-3060,1364-0380},
   MRCLASS = {57R58 (57M25 57M27)},
  MRNUMBER = {2026543},
MRREVIEWER = {Stanislav\ Jabuka},
       DOI = {10.2140/gt.2003.7.615},
       URL = {https://doi.org/10.2140/gt.2003.7.615},
}

@article {OSk,
    AUTHOR = {Ozsv\'ath, Peter and Szab\'o, Zolt\'an},
     TITLE = {Holomorphic disks and knot invariants},
   JOURNAL = {Adv. Math.},
  FJOURNAL = {Advances in Mathematics},
    VOLUME = {186},
      YEAR = {2004},
    NUMBER = {1},
     PAGES = {58--116},
      ISSN = {0001-8708,1090-2082},
   MRCLASS = {57M27 (57R58)},
  MRNUMBER = {2065507},
MRREVIEWER = {Stanislav\ Jabuka},
       DOI = {10.1016/j.aim.2003.05.001},
       URL = {https://doi.org/10.1016/j.aim.2003.05.001},
}

@article {OSr,
    AUTHOR = {Ozsv\'ath, Peter  and Szab\'o, Zolt\'an},
     TITLE = {Knot {F}loer homology and rational surgeries},
   JOURNAL = {Algebr. Geom. Topol.},
  FJOURNAL = {Algebraic \& Geometric Topology},
    VOLUME = {11},
      YEAR = {2011},
    NUMBER = {1},
     PAGES = {1--68},
      ISSN = {1472-2747,1472-2739},
   MRCLASS = {57R58 (57M25 57M27)},
  MRNUMBER = {2764036},
MRREVIEWER = {Hans\ U.\ Boden},
       DOI = {10.2140/agt.2011.11.1},
       URL = {https://doi.org/10.2140/agt.2011.11.1},
}

@article {Pla,
    AUTHOR = {Plamenevskaya, Olga},
     TITLE = {Bounds for the {T}hurston-{B}ennequin number from {F}loer
              homology},
   JOURNAL = {Algebr. Geom. Topol.},
  FJOURNAL = {Algebraic \& Geometric Topology},
    VOLUME = {4},
      YEAR = {2004},
     PAGES = {399--406},
      ISSN = {1472-2747,1472-2739},
   MRCLASS = {57R17 (57M27 57R58)},
  MRNUMBER = {2077671},
MRREVIEWER = {Stanislav\ Jabuka},
       DOI = {10.2140/agt.2004.4.399},
       URL = {https://doi.org/10.2140/agt.2004.4.399},
}

@article {Rao,
    AUTHOR = {Raoux, Katherine},
     TITLE = {{$\tau$}-invariants for knots in rational homology spheres},
   JOURNAL = {Algebr. Geom. Topol.},
  FJOURNAL = {Algebraic \& Geometric Topology},
    VOLUME = {20},
      YEAR = {2020},
    NUMBER = {4},
     PAGES = {1601--1640},
      ISSN = {1472-2747,1472-2739},
   MRCLASS = {57K16 (57R58)},
  MRNUMBER = {4127080},
MRREVIEWER = {Christopher\ William\ Davis},
       DOI = {10.2140/agt.2020.20.1601},
       URL = {https://doi.org/10.2140/agt.2020.20.1601},
}

@article {Rasmussen:2010,
    AUTHOR = {Rasmussen, Jacob},
     TITLE = {Khovanov homology and the slice genus},
   JOURNAL = {Invent. Math.},
  FJOURNAL = {Inventiones Mathematicae},
    VOLUME = {182},
      YEAR = {2010},
    NUMBER = {2},
     PAGES = {419--447},
      ISSN = {0020-9910,1432-1297},
   MRCLASS = {57M27},
  MRNUMBER = {2729272},
MRREVIEWER = {William\ D.\ Gillam},
       DOI = {10.1007/s00222-010-0275-6},
       URL = {https://doi.org/10.1007/s00222-010-0275-6},
}

@article {Murasugi:1965,
    AUTHOR = {Murasugi, Kunio},
     TITLE = {On a certain numerical invariant of link types},
   JOURNAL = {Trans. Amer. Math. Soc.},
  FJOURNAL = {Transactions of the American Mathematical Society},
    VOLUME = {117},
      YEAR = {1965},
     PAGES = {387--422},
      ISSN = {0002-9947,1088-6850},
   MRCLASS = {55.20},
  MRNUMBER = {171275},
MRREVIEWER = {R.\ H.\ Fox},
       DOI = {10.2307/1994215},
       URL = {https://doi.org/10.2307/1994215},
}

@article {Ras17,
    AUTHOR = {Rasmussen, Jacob and Rasmussen, Sarah Dean},
     TITLE = {Floer simple manifolds and {L}-space intervals},
   JOURNAL = {Adv. Math.},
  FJOURNAL = {Advances in Mathematics},
    VOLUME = {322},
      YEAR = {2017},
     PAGES = {738--805},
      ISSN = {0001-8708,1090-2082},
   MRCLASS = {57R58 (57M27)},
  MRNUMBER = {3720808},
MRREVIEWER = {Matthew\ Stoffregen},
       DOI = {10.1016/j.aim.2017.10.014},
       URL = {https://doi.org/10.1016/j.aim.2017.10.014},
}

@article {Sato,
    AUTHOR = {Sato, Kouki},
     TITLE = {A full-twist inequality for the {$\nu^+$}-invariant},
   JOURNAL = {Topology Appl.},
  FJOURNAL = {Topology and its Applications},
    VOLUME = {245},
      YEAR = {2018},
     PAGES = {113--130},
      ISSN = {0166-8641,1879-3207},
   MRCLASS = {57M27},
  MRNUMBER = {3823992},
MRREVIEWER = {Shida\ Wang},
       DOI = {10.1016/j.topol.2018.06.010},
       URL = {https://doi.org/10.1016/j.topol.2018.06.010},
}

@article {Vafaee,
    AUTHOR = {Vafaee, Faramarz},
     TITLE = {On the knot {F}loer homology of twisted torus knots},
   JOURNAL = {Int. Math. Res. Not. IMRN},
  FJOURNAL = {International Mathematics Research Notices. IMRN},
      YEAR = {2015},
    NUMBER = {15},
     PAGES = {6516--6537},
      ISSN = {1073-7928,1687-0247},
   MRCLASS = {57M27},
  MRNUMBER = {3384486},
MRREVIEWER = {Fatemeh\ Douroudian},
       DOI = {10.1093/imrn/rnu130},
       URL = {https://doi.org/10.1093/imrn/rnu130},
}

@article {Wu16,
    AUTHOR = {Wu, Zhongtao},
     TITLE = {A cabling formula for the {$\nu^+$} invariant},
   JOURNAL = {Proc. Amer. Math. Soc.},
  FJOURNAL = {Proceedings of the American Mathematical Society},
    VOLUME = {144},
      YEAR = {2016},
    NUMBER = {9},
     PAGES = {4089--4098},
      ISSN = {0002-9939,1088-6826},
   MRCLASS = {57M27 (57M25)},
  MRNUMBER = {3513564},
MRREVIEWER = {William\ W.\ Menasco},
       DOI = {10.1090/proc/13029},
       URL = {https://doi.org/10.1090/proc/13029},
}

@article {WY,
    AUTHOR = {Wu, Zhongtao and Yang, Jingling},
     TITLE = {Rational genus and {H}eegaard {F}loer homology},
   JOURNAL = {preprint arXiv:2307.06807},
      YEAR = {2023}
}

@article {Feller-Park:2021,
    AUTHOR = {Feller, Peter and Park, JungHwan},
     TITLE = {Genus one cobordisms between torus knots},
   JOURNAL = {Int. Math. Res. Not. IMRN},
  FJOURNAL = {International Mathematics Research Notices. IMRN},
      YEAR = {2021},
    NUMBER = {1},
     PAGES = {523--550},
      ISSN = {1073-7928,1687-0247},
   MRCLASS = {57K10 (57R75)},
  MRNUMBER = {4198504},
MRREVIEWER = {Christopher\ William\ Davis},
       DOI = {10.1093/imrn/rnaa027},
       URL = {https://doi.org/10.1093/imrn/rnaa027},
}

@article {BCG:2017,
    AUTHOR = {Bodn\'ar, J\'ozsef and Celoria, Daniele and Golla, Marco},
     TITLE = {A note on cobordisms of algebraic knots},
   JOURNAL = {Algebr. Geom. Topol.},
  FJOURNAL = {Algebraic \& Geometric Topology},
    VOLUME = {17},
      YEAR = {2017},
    NUMBER = {4},
     PAGES = {2543--2564},
      ISSN = {1472-2747,1472-2739},
   MRCLASS = {57M25 (14B05 14B07 57M27 57R58)},
  MRNUMBER = {3686406},
MRREVIEWER = {Yuanan\ Diao},
       DOI = {10.2140/agt.2017.17.2543},
       URL = {https://doi.org/10.2140/agt.2017.17.2543},
}

@article {ozsvath2003absolutely,
    AUTHOR = {Ozsv\'ath, Peter and Szab\'o, Zolt\'an},
     TITLE = {Absolutely graded {F}loer homologies and intersection forms
              for four-manifolds with boundary},
   JOURNAL = {Adv. Math.},
  FJOURNAL = {Advances in Mathematics},
    VOLUME = {173},
      YEAR = {2003},
    NUMBER = {2},
     PAGES = {179--261},
      ISSN = {0001-8708,1090-2082},
   MRCLASS = {57R58 (57M27)},
  MRNUMBER = {1957829},
MRREVIEWER = {Jacob\ Andrew\ Rasmussen},
       DOI = {10.1016/S0001-8708(02)00030-0},
       URL = {https://doi.org/10.1016/S0001-8708(02)00030-0},
}

@article{FKQS2026,
  title  = {Gordian distance between local knots in 3-manifolds},
  author = {Friedl, Stefan and Kalelkar, Tejas and Quintanilha, Jos{\'e} Pedro and Shah, Tanushree},
  year   = {2026},
}

@article {Gre15,
    AUTHOR = {Greene, Joshua Evan},
     TITLE = {L-space surgeries, genus bounds, and the cabling conjecture},
   JOURNAL = {J. Differential Geom.},
  FJOURNAL = {Journal of Differential Geometry},
    VOLUME = {100},
      YEAR = {2015},
    NUMBER = {3},
     PAGES = {491--506},
      ISSN = {0022-040X,1945-743X},
   MRCLASS = {57M25 (57M27)},
  MRNUMBER = {3352796},
MRREVIEWER = {Daniel\ Ruberman},
       URL = {http://projecteuclid.org/euclid.jdg/1432842362},
}

@book {Rasmussen:2003,
    AUTHOR = {Rasmussen, Jacob Andrew},
     TITLE = {Floer homology and knot complements},
      NOTE = {Thesis (Ph.D.)--Harvard University},
 PUBLISHER = {ProQuest LLC, Ann Arbor, MI},
      YEAR = {2003},
     PAGES = {126},
      ISBN = {978-0496-39374-9},
   MRCLASS = {99-05},
  MRNUMBER = {2704683},
       URL =
              {http://gateway.proquest.com/openurl?url_ver=Z39.88-2004&rft_val_fmt=info:ofi/fmt:kev:mtx:dissertation&res_dat=xri:pqdiss&rft_dat=xri:pqdiss:3091665},
}

@article {Ras07,
    AUTHOR = {Rasmussen, Jacob},
     TITLE = {Lens space surgeries and L-space homology spheres},
   JOURNAL = {preprint arXiv:arXiv:0710.2531},
      YEAR = {2007}
}

@article {Hed11,
    AUTHOR = {Hedden, Matthew},
     TITLE = {On {F}loer homology and the {B}erge conjecture on knots
              admitting lens space surgeries},
   JOURNAL = {Trans. Amer. Math. Soc.},
  FJOURNAL = {Transactions of the American Mathematical Society},
    VOLUME = {363},
      YEAR = {2011},
    NUMBER = {2},
     PAGES = {949--968},
      ISSN = {0002-9947,1088-6850},
   MRCLASS = {57M25 (57M27 57R58)},
  MRNUMBER = {2728591},
MRREVIEWER = {Adam\ M.\ Lowrance},
       DOI = {10.1090/S0002-9947-2010-05117-7},
       URL = {https://doi.org/10.1090/S0002-9947-2010-05117-7},
}

@article {OzSz:2004,
    AUTHOR = {Ozsv\'ath, Peter and Szab\'o, Zolt\'an},
     TITLE = {Holomorphic disks and topological invariants for closed
              three-manifolds},
   JOURNAL = {Ann. of Math. (2)},
  FJOURNAL = {Annals of Mathematics. Second Series},
    VOLUME = {159},
      YEAR = {2004},
    NUMBER = {3},
     PAGES = {1027--1158},
      ISSN = {0003-486X,1939-8980},
   MRCLASS = {57M27 (32Q65 57R58)},
  MRNUMBER = {2113019},
MRREVIEWER = {Thomas\ E.\ Mark},
       DOI = {10.4007/annals.2004.159.1027},
       URL = {https://doi.org/10.4007/annals.2004.159.1027},
}

@article {OSpa,
    AUTHOR = {Ozsv\'ath, Peter and Szab\'o, Zolt\'an},
     TITLE = {Holomorphic disks and three-manifold invariants: properties
              and applications},
   JOURNAL = {Ann. of Math. (2)},
  FJOURNAL = {Annals of Mathematics. Second Series},
    VOLUME = {159},
      YEAR = {2004},
    NUMBER = {3},
     PAGES = {1159--1245},
      ISSN = {0003-486X,1939-8980},
   MRCLASS = {57M27 (32Q65 57R58)},
  MRNUMBER = {2113020},
MRREVIEWER = {Thomas\ E.\ Mark},
       DOI = {10.4007/annals.2004.159.1159},
       URL = {https://doi.org/10.4007/annals.2004.159.1159},
}

@article {Zemcsi,
    AUTHOR = {Zemke, Ian},
     TITLE = {Connected sums and involutive knot {F}loer homology},
   JOURNAL = {Proc. Lond. Math. Soc. (3)},
  FJOURNAL = {Proceedings of the London Mathematical Society. Third Series},
    VOLUME = {119},
      YEAR = {2019},
    NUMBER = {1},
     PAGES = {214--265},
      ISSN = {0024-6115,1460-244X},
   MRCLASS = {57M27 (57R56)},
  MRNUMBER = {3957835},
MRREVIEWER = {Sergiy\ Koshkin},
       DOI = {10.1112/plms.12227},
       URL = {https://doi.org/10.1112/plms.12227},
}

@incollection {HHci,
    AUTHOR = {Hendricks, Kristen and Hom, Jennifer},
     TITLE = {A note on knot concordance and involutive knot {F}loer
              homology},
 BOOKTITLE = {Breadth in contemporary topology},
    SERIES = {Proc. Sympos. Pure Math.},
    VOLUME = {102},
     PAGES = {113--118},
 PUBLISHER = {Amer. Math. Soc., Providence, RI},
      YEAR = {2019},
      ISBN = {978-1-4704-4249-1},
   MRCLASS = {57M27 (57M25)},
  MRNUMBER = {3967364},
MRREVIEWER = {Allison\ N.\ Miller},
       DOI = {10.1090/pspum/102/09},
       URL = {https://doi.org/10.1090/pspum/102/09},
}

@article {HR23,
    AUTHOR = {Hedden, Matthew and Raoux, Katherine},
     TITLE = {Knot {F}loer homology and relative adjunction inequalities},
   JOURNAL = {Selecta Math. (N.S.)},
  FJOURNAL = {Selecta Mathematica. New Series},
    VOLUME = {29},
      YEAR = {2023},
    NUMBER = {1},
     PAGES = {Paper No. 7, 48},
      ISSN = {1022-1824,1420-9020},
   MRCLASS = {57K18 (57K10 57K31 57K33 57K41 57R58 57R65)},
  MRNUMBER = {4507976},
MRREVIEWER = {Tye\ Lidman},
       DOI = {10.1007/s00029-022-00810-1},
       URL = {https://doi.org/10.1007/s00029-022-00810-1},
}

\end{document}